\providecommand{\U}[1]{\protect\rule{.1in}{.1in}}
\newtheorem{theorem}{Theorem}[section]
\newtheorem{fact}[theorem]{Fact}
\newtheorem{corollary}[theorem]{Corollary}
\newtheorem{lemma}[theorem]{Lemma}
\newtheorem{problem}[theorem]{Problem}
\newtheorem{proposition}[theorem]{Proposition}
\theoremstyle{definition}
\newtheorem{remark}[theorem]{Remark}
\newtheorem{definition}[theorem]{Definition}
\let\@fnsymbol\@arabic
\begin{document}

\title{Invariance groups of finite functions and \\orbit equivalence of permutation groups}
\author{
\makebox[0.4 \linewidth]{Eszter~K.~Horv\'{a}th\thanks{Bolyai Institute, University of Szeged, Aradi
v\'{e}rtan\'{u}k tere 1, H-6720 Szeged, Hungary} \footnotemark[3] \footnotemark[4]}
\\{\small \texttt{{horeszt@math.u-szeged.hu}}}
\and 
\makebox[0.4 \linewidth]{G\'{e}za~Makay\footnotemark[1] \footnotemark[3]}
\\{\small \texttt{{makayg@math.u-szeged.hu}}}
\and 
\makebox[0.4 \linewidth]{Reinhard~P\"{o}schel\thanks{Institut f\"{u}r Algebra, Technische
Universit\"{a}t Dresden, D-01062 Dresden, Germany}}
\\{\small \texttt{{Reinhard.Poeschel@tu-dresden.de}}}
\and 
\makebox[0.4 \linewidth]{Tam\'{a}s~Waldhauser\footnotemark[1] \thanks{Partially supported by the
T\'{A}MOP-4.2.1/B-09/1/KONV-2010-0005 program of the National
DevelopmentAgency of Hungary.} \thanks{Partially supported by the Hungarian
National Foundation for Scientific Research under grant no. K83219.}
\thanks{Partially supported by the Hungarian National Foundation for
Scientific Research under grant no. K77409.}}
\\{\small \texttt{{twaldha@math.u-szeged.hu} }} }
\date{}
\maketitle

\begin{abstract}
Which subgroups of the symmetric group $S_{n}$ arise as invariance groups of
$n$-variable functions defined on a $k$-element domain?\ It appears that the
higher the difference $n-k$, the more difficult it is to answer this question. For
$k\geq n$, the answer is easy: all subgroups of $S_{n}$ are invariance groups.
We give a complete answer in the cases $k=n-1$ and $k=n-2$, and we also give a
partial answer in the general case: we describe invariance groups when $n$ is
much larger than $n-k$. The proof utilizes Galois connections and the
corresponding closure operators on $S_{n}$, which turn out to provide a
generalization of orbit equivalence of permutation groups. We also present
some computational results, which show that all primitive groups except for
the alternating groups arise as invariance groups of functions defined on a
three-element domain.

\end{abstract}

\section{Introduction\label{section introduction}}

This paper presents a Galois connection that facilitates the study of
permutation groups representable as invariance groups of functions of several
variables defined on finite domains. We shall assume without loss of
generality that our functions are defined on the set $\mathbf{k}:=\left\{
1,\ldots,k\right\}  $ for some integer $k\geq2$. We say that an $n$-ary
function $f\colon\mathbf{k}^{n}\rightarrow\mathbf{m}$ is \emph{invariant under
a permutation }$\sigma\in S_{n}$, if%
\[
f\left(  x_{1},\dots,x_{n}\right)  =f\left(  x_{1\sigma},\dots,x_{n\sigma
}\right)
\]
holds for all $\left(  x_{1},\ldots,x_{n}\right)  \in\mathbf{k}^{n}$, and we
denote this fact by $\sigma\vdash f$. The \emph{invariance group (or symmetry
group) of }$f$ is the subgroup $\left\{  \sigma\in S_{n}\mid\sigma\vdash
f\right\}  $ of the full symmetric group $S_{n}$. We will say that a group
$G\leq S_{n}$ is $\left(  k,m\right)  $\emph{-representable} if there exists a
function $f\colon\mathbf{k}^{n}\rightarrow\mathbf{m}$ whose invariance group
is $G$. Furthermore, we call a group $\left(  k,\infty\right)  $%
\emph{-representable} if it is $\left(  k,m\right)  $-representable for some
natural number $m$. Note that $\left(  k,\infty\right)  $-representability is
equivalent to being the invariance group of a function $f\colon\mathbf{k}%
^{n}\rightarrow\mathbb{N}$.

A group $G\leq S_{n}$ is $\left(  2,2\right)  $-representable if and only if
it is the invariance group of a Boolean function (i.e., a function
$f\colon\left\{  0,1\right\}  ^{n}\rightarrow\left\{  0,1\right\}  $), and a
group is $\left(  k,\infty\right)  $-representable if and only if it is the
invariance group of a pseudo-Boolean function (i.e., a function $f\colon
\left\{  0,1\right\}  ^{n}\rightarrow\mathbb{R}$, cf.\ \cite[Chapter
13]{CrHa11}). Invariance groups of (pseudo-)Boolean functions are important
objects of study in computer science (see \cite{CloK91} and the references
therein); however, our main motivation comes from the algebraic investigations
of A.~Kisielewicz \cite{Kis1998}. Kisielewicz defines a group $G$ to be
$m$\emph{-representable} if there is a function $f\colon\left\{  0,1\right\}
^{n}\rightarrow\mathbf{m}$ whose invariance group is $G$ (equivalently, $G$ is
$\left(  2,m\right)  $-representable), and $G$ is defined to be
\emph{representable} if it is $m$-representable for some positive integer $m$
(equivalently, $G$ is $\left(  2,\infty\right)  $-representable). It is easy
to see that a group is representable if and only if it is the intersection of
$2$-representable groups (i.e., invariance groups of Boolean functions). It
was stated in \cite{CloK91} that every representable group is $2$%
-representable; however, this is not true: as shown by Kisielewicz
\cite{Kis1998}, the Klein four-group is $3$-representable but not
$2$-representable. Moreover, it is also discussed in \cite{Kis1998} that it is
probably very difficult to find another such example by known constructions
for permutation groups.

In this paper we focus on $\left(  k,\infty\right)  $-representability of
groups for arbitrary $k\geq2$. It is straightforward to verify that a group is
$\left(  k,\infty\right)  $-representable if and only if it is the
intersection of invariance groups of operations $f\colon\mathbf{k}%
^{n}\rightarrow\mathbf{k}$ (cf. Fact~\ref{fact closed vs repr}). We introduce
a Galois connection between operations on $\mathbf{k}$ and permutations on
$\mathbf{n}$, such that the Galois closed subsets of $S_{n}$ are exactly the
groups that are representable in this way. Our main goal is to characterize
the Galois closed groups; as it turns out, the difficulty of the problem
depends on the gap $d:=n-k$ between the number of variables and the size of
the domain. The easiest case is $d\leq0$, where all groups are closed (see
Proposition~\ref{prop k>=n}); for $d=1$ the only non-closed groups are the
alternating groups (see Proposition~\ref{prop k=n-1}). The case $d=2$ is
considerably more difficult (see Proposition~\ref{prop k=n-2}), and the
general case, which includes representability by invariance groups of Boolean
functions, seems to be beyond reach. However, we provide a characterization of
Galois closed groups for arbitrary $d$ provided that $n$ is much larger than
$d$ (more precisely, $n>\max\left(  2^{d},d^{2}+d\right)  $; see
Theorem~\ref{thm main}.)

Let us mention that our approach is also related to \emph{orbit equivalence}
of groups
(see Section~\ref{sec2}(A)). In the case $k=2$, two groups have the same
Galois closure if and only if they are orbit equivalent, whereas the cases
$k>2$ correspond to finer equivalence relations on the set of subgroups of
$S_{n}$. Thus our Galois connection provides a parameterized version of orbit
equivalence that could be interesting from the viewpoint of the theory of
permutation groups.

In Section~\ref{section preliminaries} we formalize the Galois connection, we
discuss its relationship to orbit equivalence, and we recall some basic facts
about subdirect products of groups. We state our main result
(Theorem~\ref{thm main}) in Section~\ref{section general}, where we prove it
in the special cases $d\leq0$ and $d=1$, and we also make some general
observations about closures of direct and subdirect products. We prove
Theorem~\ref{thm main} in Section~\ref{section proof of thm main}, and in
Section~\ref{section computations} we present results of some computer
experiments, which, together with Theorem~\ref{thm main}, settle the case
$d=2$. Finally, in Section~\ref{section concluding} we relate our approach to
relational definability of permutation groups (cf. \cite{Wi69}) and we
formulate some open problems.

\section{Preliminaries\label{section preliminaries}\label{sec2}}


Throughout the paper, $n$ and $k$ denote positive integers; we always assume
that $n,k\geq2$, and we denote the difference $n-k$ by $d$. As usual, $S_{B}$
and $A_{B}$ denote the symmetric and alternating groups, respectively, on an
arbitrary set $B$, and $S_{n}$ stands for the symmetric group on the set
$\mathbf{n}=\{1,\ldots,n\}$.


\begin{center}
\vspace{1.5ex} \textbf{(A) A Galois connection for invariance groups }
\end{center}

In order to precisely state the problem that we study, first we introduce some
terminology and notation. The correspondence $\vdash$ defined in
Section~\ref{section introduction} induces a Galois connection between
permutations of $\mathbf{n}$ and $n$-ary operations on $\mathbf{k}$. More
precisely, let $O_{k}^{(n)}=\left\{  f\mid f\colon\mathbf{k}^{n}%
\rightarrow\mathbf{k}\right\}  $ denote the set of all $n$-ary operations on
$\mathbf{k}$, and for $F\subseteq O_{k}^{(n)}$ and $G\subseteq S_{n}$ let%
\renewcommand{\arraystretch}{1.5}%
\[%
\begin{tabular}
[c]{ll}%
$F^{\vdash}:=\{\sigma\in S_{n}\mid\forall f\in F:\sigma\vdash f\},$ &
$\overline{F}^{\left(  k\right)  }:=(F^{\vdash})^{\vdash},$\\
$G^{\vdash}:=\{f\in O_{k}^{(n)}\mid\forall\sigma\in G:\sigma\vdash f\},$ &
$\overline{G}^{\left(  k\right)  }:=(G^{\vdash})^{\vdash}.$%
\end{tabular}
\
\]

As for every Galois connection, the assignment $G\mapsto\overline{G}^{\left(
k\right)  }$ is a closure operator on $S_{n}$, and it is easy to see that
$\overline{G}^{\left(  k\right)  }$ is a subgroup of $S_{n}$ for every subset
$G\subseteq S_{n}$ (even if $G$ is not a group). For $G\leq S_{n}$, we call
$\overline{G}^{\left(  k\right)  }$ the \emph{Galois closure of }$G$\emph{
over }$\mathbf{k}$, and we say that $G$ is \emph{Galois closed over
}$\mathbf{k}$ if $\overline{G}^{\left(  k\right)  }=G$. Sometimes, when there
is no risk of ambiguity, we will omit the reference to $\mathbf{k}$, and speak
simply about (Galois) closed groups and (Galois) closures. Similarly, we have
a closure operator on $O_{k}^{(n)}$; the study of this closure operator
constitutes a topic of current research of the authors. However, in this paper
we focus on the \textquotedblleft group side\textquotedblright\ of the Galois
connection; more precisely, we address the following problem.

\begin{problem}
\label{problem main}For arbitrary $k,n\geq2$, characterize subgroups of
$S_{n}$ that are Galois closed over $\mathbf{k}$.
\end{problem}

As we shall see, this problem is easy if $k\geq n$, and it is very hard if $n$
is much larger than $k$. Our main result is a solution in the intermediate
case, when $d=n-k>0$ is relatively small compared to $n$. Complementing this
result with a computer search for small values of $n$, we obtain an explicit
description of Galois closed groups for $n=k-1$ and $n=k-2$ for all $n$.
Observe that if $k_{1}\geq k_{2}$, then $\overline{G}^{\left(  k_{1}\right)
}\leq\overline{G}^{\left(  k_{2}\right)  }$, hence if $G$ is Galois closed
over $\mathbf{k}_{2}$, then it is also Galois closed over $\mathbf{k}_{1}$.
Thus we have the most non-closed groups in the Boolean case (i.e., in the case
$k=2$), whereas for $k\geq n$ every subgroup of $S_{n}$ is Galois closed (see
Proposition~\ref{prop k>=n}).

The following fact appears in \cite{CloK91} for $k=2$, and it remains valid
for arbitrary $k$. We omit the proof, as it is a straightforward
generalization of the proof of the equivalence of conditions (1) and (2) in
Theorem~12 of \cite{CloK91}.

\begin{fact}
\label{fact closed vs repr}A group $G\leq S_{n}$ is Galois closed over
$\mathbf{k}$ if and only if $G$ is $\left(  k,\infty\right)  $-representable.
\end{fact}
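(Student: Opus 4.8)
The plan is to reduce the statement to the elementary observation recorded just before the Fact, namely that a group is $\left(k,\infty\right)$-representable exactly when it is a (necessarily finite) intersection of invariance groups of single operations $f\colon\mathbf{k}^{n}\rightarrow\mathbf{k}$, and then to invoke the standard dictionary for Galois connections, according to which the closed subsets on the group side are precisely the polars $F^{\vdash}$ with $F\subseteq O_{k}^{(n)}$. I would first establish the intersection characterization by two explicit constructions. For the implication from representability to intersections, given $f\colon\mathbf{k}^{n}\rightarrow\mathbf{m}$ whose invariance group is $G$, I would define for each value $j\in\mathbf{m}$ an auxiliary operation $g_{j}\colon\mathbf{k}^{n}\rightarrow\mathbf{k}$ that takes the value $1$ on the level set $f^{-1}(j)$ and the value $2$ elsewhere (both values available since $k\geq2$). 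Because $\sigma\vdash f$ holds if and only if every level set $f^{-1}(j)$ is $\sigma$-invariant as a set, if and only if $\sigma\vdash g_{j}$ for all $j$, one obtains $G=\{f\}^{\vdash}=\bigcap_{j\in\mathbf{m}}\{g_{j}\}^{\vdash}$. Conversely, given $G=\bigcap_{i=1}^{r}\{f_{i}\}^{\vdash}$ with each $f_{i}\colon\mathbf{k}^{n}\rightarrow\mathbf{k}$, the single operation $f:=(f_{1},\dots,f_{r})\colon\mathbf{k}^{n}\rightarrow\mathbf{k}^{r}\cong\mathbf{m}$, with $m=k^{r}$, satisfies $\sigma\vdash f$ iff $\sigma\vdash f_{i}$ for every $i$, so its invariance group is exactly $G$, witnessing $\left(k,\infty\right)$-representability. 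Since $O_{k}^{(n)}$ is finite, all intersections occurring here are finite.

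Next I would translate the two implications of the Fact into this language. For any $F\subseteq O_{k}^{(n)}$ the polar unwinds to $F^{\vdash}=\bigcap_{f\in F}\{f\}^{\vdash}$, an intersection of invariance groups; in particular $\overline{G}^{\left(k\right)}=(G^{\vdash})^{\vdash}=\bigcap_{f\in G^{\vdash}}\{f\}^{\vdash}$. Hence, for the forward direction, if $G$ is Galois closed over $\mathbf{k}$ then $G=\overline{G}^{\left(k\right)}$ is a finite intersection of invariance groups, and therefore $\left(k,\infty\right)$-representable by the first step. For the converse, the general Galois-connection identity $X^{\vdash\vdash\vdash}=X^{\vdash}$ shows that every polar is closed, i.e.\ $\overline{F^{\vdash}}^{\left(k\right)}=F^{\vdash}$; thus if $G$ is $\left(k,\infty\right)$-representable, then by the first step $G=\bigcap_{f\in F}\{f\}^{\vdash}=F^{\vdash}$ for some $F\subseteq O_{k}^{(n)}$, whence $G=\overline{G}^{\left(k\right)}$ is Galois closed. (No separate check that $G$ is a group is needed, since $\overline{G}^{\left(k\right)}$ is automatically a subgroup of $S_{n}$.)

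The two constructions and the closure identities are genuinely routine; the only point deserving care is the decomposition of a single $\mathbf{m}$-valued function into $\mathbf{k}$-valued operations, which is precisely where the hypothesis $k\geq2$ enters, as one needs at least two distinct values in $\mathbf{k}$ to separate each level set. Everything else is the standard \textquotedblleft closed $=$ polar\textquotedblright\ correspondence for Galois connections, combined with the harmless finiteness of $O_{k}^{(n)}$.
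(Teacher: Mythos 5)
Your proof is correct, and it takes essentially the route the paper intends: the paper omits the proof of this Fact, citing the equivalence of conditions (1) and (2) in Theorem~12 of Clote--Kranakis, and its ``straightforward generalization'' is exactly your argument --- the intersection characterization of $\left(k,\infty\right)$-representability via level-set indicator functions (where $k\geq2$ is needed) and tupling, combined with the standard Galois-connection fact that the closed subsets of $S_{n}$ are precisely the polars $F^{\vdash}$.
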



\begin{center}
\vspace{1.5ex} \textbf{(B) Orbits and closures }
\end{center}

The symmetric group $S_{n}$ acts naturally on $\mathbf{k}^{n}$: for $a=\left(
a_{1},\ldots,a_{n}\right)  \in\mathbf{k}^{n}$ and $\sigma\in S_{n}$, let
$a^{\sigma}=\left(  a_{1\sigma},\ldots,a_{n\sigma}\right)  $ be the action of
$\sigma$ on $a$. We denote the orbit of $a\in\mathbf{k}^{n}$ under the action
of the group $G\leq S_{n}$ by $a^{G}$, and we use the notation
$\operatorname{Orb}^{\left(  k\right)  }\left(  G\right)  $ for the set of
orbits of $G\leq S_{n}$ acting on $\mathbf{k}^{n}$:%
\[
a^{G}:=\left\{  a^{\sigma}\mid\sigma\in G\right\}  ,\qquad\operatorname{Orb}%
^{\left(  k\right)  }\left(  G\right)  :=\left\{  a^{G}\mid a\in\mathbf{k}%
^{n}\right\}  .
\]

Clearly, $\sigma\vdash f$ holds for a given $\sigma\in S_{n}$ and $f\in
O_{k}^{(n)}$ if and only if $f$ is constant on the orbits of (the group
generated by) $\sigma$. Therefore, for any $G,H\leq S_{n}$, we have
$G^{\vdash}=H^{\vdash}$ if and only if $\operatorname{Orb}^{\left(  k\right)
}\left(  G\right)  =\operatorname{Orb}^{\left(  k\right)  }\left(  H\right)
$. On the other hand, from the identity $G^{\vdash\vdash\vdash}=G^{\vdash}$
(which is valid in any Galois connection), it follows that $G^{\vdash
}=H^{\vdash}$ is equivalent to $\overline{G}^{\left(  k\right)  }=\overline
{H}^{\left(  k\right)  }$. Thus we have%
\begin{equation}
\overline{G}^{\left(  k\right)  }=\overline{H}^{\left(  k\right)  }%
\iff\operatorname{Orb}^{\left(  k\right)  }\left(  G\right)
=\operatorname{Orb}^{\left(  k\right)  }\left(  H\right)
\label{eq k-orbit equivalence}%
\end{equation}
for all subgroups $G,H$ of $S_{n}$.

Two groups $G,H\leq S_{n}$ are \emph{orbit equivalent}, if $G$ and $H$ have
the same orbits on the power set of $\mathbf{n}$ (which can be identified
naturally with $\mathbf{2}^{n}$), i.e., if $\operatorname{Orb}^{\left(
2\right)  }\left(  G\right)  =\operatorname{Orb}^{\left(  2\right)  }\left(
H\right)  $ holds \cite{Inglis,SW}. One can define a similar equivalence
relation on the set of subgroups of $S_{n}$ for any $k\geq2$ by
(\ref{eq k-orbit equivalence}), and each class of this equivalence relation
contains a greatest group, which is the common closure of all groups in the
same equivalence class. In other words, a group is Galois closed over
$\mathbf{k}$ if and only if it is the greatest group among those having the
same orbits on $\mathbf{k}^{n}$ (cf. Theorem~2.2 of \cite{Kis1998} in the
Boolean case). Therefore, the Galois closure of $G$ over $\mathbf{k}$ can be
described as follows:%
\begin{equation}
\overline{G}^{\left(  k\right)  }=\left\{  \sigma\in S_{n}\mid\forall
a\in\mathbf{k}^{n}:~a^{\sigma}\in a^{G}\right\}  .
\label{eq closure via orbits}%
\end{equation}

Orbit equivalence of groups has been studied by several authors; let us just
mention here a result of Seress \cite{Ser97} that explicitly describes orbit
equivalence of primitive groups (see \cite{SY} for a more general result). For
the definitions of the linear groups appearing in the theorem, we refer the
reader to \cite{DixMo}.

\begin{theorem}
[\cite{Ser97}]\label{thm seress}If $n\geq11$, then two different primitive
subgroups of $S_{n}$ are orbit equivalent if and only if one of them is
$A_{n}$ and the other one is $S_{n}$. For $n\leq10$, the nontrivial orbit
equivalence classes of primitive subgroups of $S_{n}$ are the following:%
\renewcommand{\theenumi}{(\roman{enumi})}
\renewcommand{\labelenumi}{\upshape\theenumi}%

\begin{enumerate}
\item for $n=3$: $\left\{  A_{3},S_{3}\right\}  ;$

\item for $n=4$: $\left\{  A_{4},S_{4}\right\}  ;$

\item for $n=5$: $\left\{  C_{5},D_{10}\right\}  $ and $\left\{
\operatorname{AGL}\left(  1,5\right)  ,A_{5},S_{5}\right\}  ;$

\item for $n=6$: $\left\{  \operatorname{PGL}\left(  2,5\right)  ,A_{6}%
,S_{6}\right\}  ;$

\item for $n=7$: $\left\{  A_{7},S_{7}\right\}  ;$

\item for $n=8$: $\left\{  \operatorname{AGL}\left(  1,8\right)
,\operatorname{A \Gamma L}\left(  1,8\right)  ,\operatorname{ASL}\left(
3,2\right)  \right\}  $ and $\left\{  A_{8},S_{8}\right\}  ;$

\item for $n=9$: $\left\{  \operatorname{AGL}\left(  1,9\right)
,\operatorname{A \Gamma L}\left(  1,9\right)  \right\}  $, $\left\{
\operatorname{ASL}\left(  2,3\right)  ,\operatorname{AGL}\left(  2,3\right)
\right\}  $\newline\hspace*{12ex}and $\left\{  \operatorname{PSL}\left(
2,8\right)  ,\operatorname{P \Gamma L}\left(  2,8\right)  ,A_{9}%
,S_{9}\right\}  ;$

\item for $n=10$: $\left\{  \operatorname{PGL}\left(  2,9\right)
,\operatorname{P \Gamma L}\left(  2,9\right)  \right\}  $ and $\left\{
A_{10},S_{10}\right\}  .$
\end{enumerate}
\end{theorem}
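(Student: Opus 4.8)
The plan is to prove both directions, and the ``if'' direction is immediate: since $A_{n}$ is $(n-2)$-transitive it is transitive on the $j$-element subsets of $\mathbf{n}$ for every $j$, so both $A_{n}$ and $S_{n}$ have as their orbits on $\mathbf{2}^{n}$ exactly the ``levels'' grouped by cardinality, whence $A_{n}$ and $S_{n}$ are always orbit equivalent. All of the work lies in the ``only if'' direction and in the explicit classification for $n\le 10$.

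My first reduction would replace an arbitrary orbit-equivalent pair by a containment. If $G,H\le S_{n}$ are orbit equivalent, let $\mathcal{P}$ be their common partition of $\mathbf{2}^{n}$ into orbits; each block of $\mathcal{P}$ is simultaneously a single $G$-orbit and a single $H$-orbit, hence is invariant under both $G$ and $H$ and therefore under $M:=\langle G,H\rangle$. Since such a block is already one $G$-orbit and $G\le M$, it is also a single $M$-orbit, so $M$ is orbit equivalent to $G$ and to $H$; moreover $M$ is primitive whenever $G$ is, because every block system of $M$ is a block system of $G$. Thus it suffices to classify the pairs $G\le K$ of primitive groups that are orbit equivalent: each primitive orbit-equivalence class then has a largest primitive member $K$, and the whole class is recovered from the orbit-equivalent primitive subgroups of $K$.

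Next I would record the ``factorization'' form of the condition. For $G\le K$, the equality $X^{G}=X^{K}$ of subset-orbits holds if and only if $K=G\,K_{X}$, where $K_{X}$ is the setwise stabilizer of $X$; hence $G\le K$ are orbit equivalent if and only if $K=G\,K_{X}$ for every $X\subseteq\mathbf{n}$. In particular the two groups have equally many orbits on $j$-subsets for each $j$, and the orbit-equivalence classes whose largest member is $S_{n}$ are precisely the classes of \emph{set-transitive} groups (transitive on $j$-subsets for all $j$). These are completely known: apart from $A_{n}$ and $S_{n}$, set-transitive groups occur only in degrees $5,6,9$, which explains the classes in the list that contain both $A_{n}$ and $S_{n}$, and shows that for $n\ge 11$ the only set-transitive primitive groups are $A_{n}$ and $S_{n}$.

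The heart of the proof is the remaining case, in which $K\ne A_{n},S_{n}$, and this is where I expect the main obstacle. Here I would invoke the Classification of Finite Simple Groups through the O'Nan--Scott description of the maximal subgroups of primitive groups, together with the known upper bounds on the order of a primitive group that does not contain $A_{n}$ (of the form $|K|\le n^{O(\log n)}$). The criterion $K=G\,K_{X}$ for \emph{all} $X$ forces an orbit-equivalent proper subgroup $G<K$ to be very large inside $K$, so one runs through the O'Nan--Scott types and the known $2$-transitive families, using the monotonicity of the number of orbits on $j$-subsets (the Livingstone--Wagner inequality) to pin down the low-cardinality levels, and shows that for $n\ge 11$ no proper primitive $G$ can satisfy the criterion. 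The delicate point is genuinely this elimination: one must rule out accidental coincidences of subset-orbits across all O'Nan--Scott types and all exceptional $2$-transitive actions, and the estimates become tight exactly around the small degrees. Finally, for $n\le 10$ the primitive groups fall into finitely many conjugacy classes, so I would conclude by directly computing the orbits on $\mathbf{2}^{n}$ for each of them, which yields precisely the nontrivial classes (i)--(viii).
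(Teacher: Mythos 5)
This statement is not proved in the paper at all: it is Seress's theorem, imported verbatim from \cite{Ser97} as background for Section~2(B), so there is no in-paper proof to compare against and your attempt must be measured against the published argument it cites. Your preparatory reductions are correct and standard. The ``if'' direction via set-transitivity of $A_{n}$ is fine; the passage from an orbit-equivalent pair $G,H$ to the pair $G\leq M=\langle G,H\rangle$ is valid (a common orbit is invariant under $M$ and is a single $G$-orbit, hence a single $M$-orbit, and primitivity is inherited by overgroups acting on the same set); the factorization criterion $X^{G}=X^{K}\iff K=GK_{X}$ is right; and identifying the classes whose largest member is $S_{n}$ with the set-transitive groups, which by the Beaumont--Peterson classification occur beyond $A_{n},S_{n}$ only in degrees $5$, $6$ and $9$, correctly accounts for exactly those classes in the list that contain both $A_{n}$ and $S_{n}$.

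The gap is that everything else is deferred to ``one runs through the O'Nan--Scott types \dots\ and shows that for $n\geq11$ no proper primitive $G$ can satisfy the criterion,'' which you yourself flag as the delicate point; this is a restatement of the theorem, not a proof of it. The missing idea --- and the actual content of \cite{Ser97} --- is the regular-orbit theorem: every primitive group $K$ of degree $n$ not containing $A_{n}$ has a regular orbit on the power set $\mathbf{2}^{n}$, apart from an explicit finite list of exceptions of small degree. Combined with your own factorization criterion this finishes the hard case in one line: if $X^{K}$ is a regular orbit and $G\leq K$ is orbit equivalent to $K$, then $\left\vert K\right\vert =\left\vert X^{K}\right\vert =\left\vert X^{G}\right\vert \leq\left\vert G\right\vert$, so $G=K$; hence every nontrivial class has its largest member either equal to $S_{n}$ (the set-transitive case) or on the exceptional list, and the classification reduces to a finite check of explicitly known groups, carried out partly by computer. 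Your sketch contains no quantitative statement that could substitute for this reduction: Livingstone--Wagner monotonicity and an $n^{O(\log n)}$ bound on $\left\vert K\right\vert$ do not by themselves exclude coincidences of subset orbits between $K$ and a large proper subgroup. So while your architecture mirrors the literature, the core of the proof is missing.
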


In our terminology, Theorem~\ref{thm seress} states that for $n\geq11$ every
primitive subgroup of $S_{n}$ except $A_{n}$ is Galois closed over
$\mathbf{2}$, whereas for $n\leq10$ the only primitive subgroups of $S_{n}$
that are not Galois closed over $\mathbf{2}$ are the ones listed above
(omitting the last group from each block, which is the closure of the other
groups in the same block).



\begin{center}
\vspace{1.5ex} \textbf{(C) Direct and subdirect products }
\end{center}

\nopagebreak In the sequel, $B$ and $D$ always denote disjoint subsets of
$\mathbf{n}$ such that $\mathbf{n}=B\cup D$, and $G\times H$ stands for the
direct product of $G\leq S_{B}$ and $H\leq S_{D}$. In this paper we only
consider direct products with the intransitive action, i.e., the two groups
act independently on disjoint sets. Given permutations $\beta\in S_{B}$ and
$\delta\in S_{D}$, we write $\beta\times\delta$ for the corresponding element
of $S_{B}\times S_{D}$. Let $\pi_{1}$ and $\pi_{2}$ denote the first and
second projections on the direct product $S_{B}\times S_{D}$. Then we have
$\pi_{1}\left(  \beta\times\delta\right)  =\beta$ and $\pi_{2}\left(
\beta\times\delta\right)  =\delta$ for every $\beta\in S_{B},\delta\in S_{D}$,
and $\sigma=\pi_{1}\left(  \sigma\right)  \times\pi_{2}\left(  \sigma\right)
$ for every $\sigma\in S_{B}\times S_{D}$.

Recall that a subdirect product is a subgroup of a direct product such that
the projection to each coordinate is surjective. Hence, if $G\leq S_{B}\times
S_{D}$ and $G_{1}=\pi_{1}\left(  G\right)  $, $G_{2}=\pi_{2}\left(  G\right)
$, then $G$ is a subdirect product of $G_{1}$ and $G_{2}$. We denote this fact
by $G\leq_{\operatorname{sd}}G_{1}\times G_{2}$, and by $G<_{\operatorname{sd}%
}G_{1}\times G_{2}$ we mean a proper subdirect subgroup of $G_{1}\times G_{2}%
$. According to Remak~\cite{remak}, the following description of subdirect
products of groups is due to Klein~\cite{klein}. (Of course, the theorem is
valid for abstract groups, not just for permutation groups. For an English
reference, see Theorem~5.5.1 of \cite{hall}.)

\begin{theorem}
[\cite{klein,remak}]\label{thm subdirect}If $G\leq_{\operatorname{sd}}%
G_{1}\times G_{2}$, then there exists a group $K$ and surjective homomorphisms
$\varphi_{i}\colon G_{i}\rightarrow K\left(  i=1,2\right)  $ such that%
\[
G=\left\{  g_{1}\times g_{2}\mid\varphi_{1}\left(  g_{1}\right)  =\varphi
_{2}\left(  g_{2}\right)  \right\}  .
\]

\end{theorem}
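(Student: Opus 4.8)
The statement is the classical Goursat lemma, and the plan is to prove it by extracting two normal subgroups $N_{1}\trianglelefteq G_{1}$ and $N_{2}\trianglelefteq G_{2}$ whose factor groups are isomorphic, and then taking $K$ to be this common quotient. Concretely, I would first set
\[
N_{1}:=\{g_{1}\in G_{1}\mid g_{1}\times\mathrm{id}_{D}\in G\},\qquad
N_{2}:=\{g_{2}\in G_{2}\mid\mathrm{id}_{B}\times g_{2}\in G\},
\]
where $\mathrm{id}_{B}$ and $\mathrm{id}_{D}$ are the identity permutations of $S_{B}$ and $S_{D}$. These are evidently subgroups; to see that $N_{1}$ is normal in $G_{1}$, I would take any $g_{1}\in G_{1}$ and, using $\pi_{1}\left(G\right)=G_{1}$, pick $g_{2}\in G_{2}$ with $g_{1}\times g_{2}\in G$. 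Conjugating an element $h_{1}\times\mathrm{id}_{D}\in G$ (with $h_{1}\in N_{1}$) by $g_{1}\times g_{2}$ yields $\left(g_{1}h_{1}g_{1}^{-1}\right)\times\mathrm{id}_{D}\in G$, so $g_{1}N_{1}g_{1}^{-1}\subseteq N_{1}$; the argument for $N_{2}$ is symmetric.

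The heart of the proof is the construction of an isomorphism $\psi\colon G_{1}/N_{1}\to G_{2}/N_{2}$. For a coset $g_{1}N_{1}$, surjectivity of $\pi_{1}$ supplies some $g_{2}$ with $g_{1}\times g_{2}\in G$, and I would set $\psi\left(g_{1}N_{1}\right):=g_{2}N_{2}$. The step I expect to require the most care is checking that this is well defined, i.e.\ independent of the two choices involved: if $g_{1}\times g_{2}$ and $g_{1}'\times g_{2}'$ both lie in $G$ with $g_{1}^{-1}g_{1}'\in N_{1}$, then the componentwise product $\left(g_{1}\times g_{2}\right)^{-1}\left(g_{1}'\times g_{2}'\right)=\left(g_{1}^{-1}g_{1}'\right)\times\left(g_{2}^{-1}g_{2}'\right)$ lies in $G$, and multiplying it on the left by $\left(\left(g_{1}^{-1}g_{1}'\right)\times\mathrm{id}_{D}\right)^{-1}\in G$ gives $\mathrm{id}_{B}\times\left(g_{2}^{-1}g_{2}'\right)\in G$, whence $g_{2}^{-1}g_{2}'\in N_{2}$ and $g_{2}N_{2}=g_{2}'N_{2}$.

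That $\psi$ is a homomorphism is immediate from closure of $G$ under the componentwise product, and bijectivity follows from surjectivity of $\pi_{2}$ (for surjectivity of $\psi$) together with a short computation of the same flavour for injectivity: if $\psi\left(g_{1}N_{1}\right)=N_{2}$, then some $g_{2}\in N_{2}$ satisfies $g_{1}\times g_{2}\in G$, so $\left(g_{1}\times g_{2}\right)\left(\mathrm{id}_{B}\times g_{2}\right)^{-1}=g_{1}\times\mathrm{id}_{D}\in G$ forces $g_{1}\in N_{1}$.

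Finally I would put $K:=G_{1}/N_{1}$, let $\varphi_{1}$ be the canonical projection $G_{1}\to G_{1}/N_{1}$, and let $\varphi_{2}$ be the canonical projection $G_{2}\to G_{2}/N_{2}$ followed by $\psi^{-1}$, so that $\varphi_{1}$ and $\varphi_{2}$ are surjective homomorphisms onto $K$. Since $\varphi_{1}\left(g_{1}\right)=\varphi_{2}\left(g_{2}\right)$ is equivalent to $\psi\left(g_{1}N_{1}\right)=g_{2}N_{2}$, the desired description of $G$ follows: the inclusion $\subseteq$ holds by the definition of $\psi$, and for $\supseteq$ one chooses $g_{2}'$ with $g_{1}\times g_{2}'\in G$, so that $g_{2}N_{2}=\psi\left(g_{1}N_{1}\right)=g_{2}'N_{2}$, and then multiplies $g_{1}\times g_{2}'$ by the element $\mathrm{id}_{B}\times\left(g_{2}'^{-1}g_{2}\right)\in G$ (which lies in $G$ because $g_{2}'^{-1}g_{2}\in N_{2}$) to conclude $g_{1}\times g_{2}\in G$.
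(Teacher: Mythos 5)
Your proof is correct. Note, however, that the paper itself offers no proof of this statement: it is quoted as a classical result of Klein and Remak, with Hall's \emph{The theory of groups} (Theorem~5.5.1) given as the standard reference, so there is no internal argument to compare against. What you have written is the standard proof of Goursat's lemma: the kernels $N_{1}=\ker\varphi_{1}$ and $N_{2}=\ker\varphi_{2}$ you construct are exactly the ``slices'' $\{g_{1}\mid g_{1}\times\mathrm{id}\in G\}$ and $\{g_{2}\mid\mathrm{id}\times g_{2}\in G\}$, the isomorphism $\psi\colon G_{1}/N_{1}\to G_{2}/N_{2}$ is well defined for precisely the reason you give, and taking $K:=G_{1}/N_{1}$ with $\varphi_{2}:=\psi^{-1}\circ(\text{canonical projection})$ recovers the description of $G$ in the theorem. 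All the verifications (normality of $N_{1}$ and $N_{2}$, well-definedness, injectivity and surjectivity of $\psi$, and both inclusions in the final identity) are complete and correct, so your proposal supplies in full the argument that the paper delegates to the literature.
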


Note that in the above theorem we have $G=G_{1}\times G_{2}$ if and only if
$K$ is the trivial (one-element) group.

\section{The main result and some general observations\label{section general}}

Our main result is the following partial solution of
Problem~\ref{problem main} for the case $n\gg d=n-k$.

\begin{theorem}
\label{thm main}Let $n>\max\left(  2^{d},d^{2}+d\right)  $ and $G\leq S_{n}$.
Then $G$ is not Galois closed over $\mathbf{k}$ if and only if $G=A_{B}\times
L$ or $G<_{\operatorname{sd}}S_{B}\times L$, where $B\subseteq\mathbf{n}$ is
such that $D:=\mathbf{n}\setminus B$ has less than $d$ elements, and $L$ is an
arbitrary permutation group on $D$.
\end{theorem}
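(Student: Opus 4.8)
The plan is to characterize non-closed groups via the orbit description in equation (\ref{eq closure via orbits}), so the core question becomes: for which $G \le S_n$ does there exist a permutation $\sigma \in S_n \setminus G$ that nonetheless preserves all $G$-orbits on $\mathbf{k}^n$? Since $n > d$, a tuple in $\mathbf{k}^n$ (with $k = n-d$ symbols available) must repeat at least $d+1$ values — actually the pigeonhole gives that each tuple uses at most $k$ distinct values, hence at least $d$ repetitions among its coordinates. This repetition structure is what makes large $G$-orbits "coarse enough" to be preserved by extra permutations. The condition $n > \max(2^d, d^2+d)$ should be exactly what guarantees enough room for the counting arguments below.

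Let me think about how to prove each direction.

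For the forward/constructive direction (showing the listed groups are not closed), I would treat the two cases separately. If $G = A_B \times L$ with $|D| < d$, I want to exhibit a permutation outside $G$ — say $\tau \times \mathrm{id}$ for a transposition $\tau \in S_B \setminus A_B$ — that preserves every $G$-orbit on $\mathbf{k}^n$. The key computation: for any tuple $a \in \mathbf{k}^n$, its restriction $a|_B$ to the $|B| = n - |D| > k$ coordinates must have a repeated value (pigeonhole, since $|B| > k$), so a transposition of two equal coordinates fixes $a$, and more generally $a^\tau$ should be reachable from $a$ by an even permutation that agrees with $\tau$ off the repeated coordinates. Hence $\tau \times \mathrm{id} \in \overline{G}^{(k)} \setminus G$, proving $G$ is not closed. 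The subdirect case $G <_{\mathrm{sd}} S_B \times L$ is similar: using Theorem~\ref{thm subdirect} I would find an element of $S_B \times L \setminus G$ that still preserves orbits, again exploiting that restrictions to $B$ always repeat values.

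For the converse (every non-closed group has the stated form), I would argue contrapositively: assume $G$ is not of the listed type and show $\overline{G}^{(k)} = G$. Take $\sigma \in \overline{G}^{(k)}$; I must show $\sigma \in G$. The strategy is to first pin down a block $B$ on which $\sigma$ and $G$ can differ. Using orbit-preservation on cleverly chosen "rainbow-like" tuples $a$ that use as many distinct values as possible (up to $k$), I would show that $\sigma$ must fix a large set $D$ of coordinates setwise in a way forced by $G$, with $|D| < d$ being the only room for deviation; outside a small support the action of $\sigma$ is rigidly determined by orbit data. The heart is a \emph{rigidity lemma}: if $n$ is large relative to $d$, then preserving all $G$-orbits forces $\sigma$ to lie in $S_B \times L$ and in fact, unless $G$ already contains $A_B \times L$ or is a proper subdirect product, to lie in $G$ itself. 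This is where the inequality $n > \max(2^d, d^2 + d)$ enters: the bound $2^d$ controls how many distinct "colour patterns" a $d$-fold repetition can realize (so that transpositions witnessing extra symmetry are detectable), and $d^2 + d$ ensures the support of any orbit-preserving $\sigma$ can be confined to fewer than $d$ coordinates outside a common large domain.

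\textbf{The main obstacle} I expect is the converse rigidity argument: showing that orbit-preservation on $\mathbf{k}^n$ is strong enough to force $\sigma \in G$ except in the two exceptional families. Unlike the forward direction, which only needs one well-chosen tuple, here I must rule out \emph{all} stray permutations using a global counting/combinatorial argument about how orbits on repeated-value tuples interlock. I anticipate this decomposes into (a) identifying the maximal "free" coordinate set $B$ where $G$ acts as the full symmetric or alternating group, (b) bounding $|D| = |\mathbf{n} \setminus B| < d$ via a pigeonhole/dimension count tied to $d^2+d$, and (c) invoking Theorem~\ref{thm subdirect} to handle the subdirect-product subtlety — distinguishing $A_B \times L$, proper subdirect products, and the full direct product $S_B \times L$ (which \emph{is} closed and must be excluded from the non-closed list). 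Getting the bookkeeping in (b) tight enough to match $d^2 + d$ rather than a weaker bound is likely the most delicate part of the whole proof.
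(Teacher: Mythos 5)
Your forward direction is essentially sound and close in spirit to the paper's: the pigeonhole observation that every tuple restricted to $B$ repeats a value (since $\left\vert B\right\vert >k$), so that an odd permutation of $B$ can be corrected by a transposition of two equal coordinates, is exactly why $\overline{A_{B}\times L}^{\left( k\right) }=S_{B}\times L$ and why the proper subdirect products $G<_{\operatorname{sd}}S_{B}\times L$ fail to be closed (the paper packages this as Proposition~\ref{prop subdirect A_B and S_B} together with Proposition~\ref{prop closure of subdirect product}). But the converse --- which you rightly flag as the main obstacle --- is left as a plan whose essential ingredients are missing, and the ingredients you do sketch would not suffice to carry it out.

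Concretely, two things are absent. First, your ``rigidity lemma'' must in particular handle the case where $\overline{G}^{\left( k\right) }$ acts transitively, indeed primitively, on $\mathbf{n}$; nothing in your proposal rules out an exotic primitive group arising as a closure. The paper gets this from Bochert's theorem (Theorem~\ref{thm bochert}): a primitive group not containing the alternating group has trivial pointwise stabilizer on a set of size at most half the degree, which contradicts the property (derived from Proposition~\ref{prop kearnes}) that the closure of a non-closed group meets every stabilizer $\left( S_{n}\right) _{a}$ nontrivially ($k$-thickness); imprimitive closures are then excluded by block-counting with two special kinds of tuples (one value repeated $d+1$ times, and $d$ values repeated twice each), which is where the bound $d^{2}$ comes from. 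Some group-theoretic input of this strength appears unavoidable, and ``rainbow'' tuples alone do not supply it. Second, your account of where $2^{d}$ enters is not correct: it has nothing to do with counting colour patterns. Even after one knows $\overline{G}^{\left( k\right) }=S_{B}\times L$, one must still show that $G$ itself is large, and the paper does this by taking a tuple $a$ with $d$ values repeated twice each, so that $\left\vert \left( S_{n}\right) _{a}\right\vert =2^{d}$, and deducing from $S_{n}=\left( S_{n}\right) _{a}\cdot G$ (Proposition~\ref{prop kearnes}) that the index of $G$ (or of its projection $G_{1}$ in $S_{B}$) is at most $2^{d}<n$; the classical fact (Proposition~\ref{thm sz-cz-sz}) that a proper subgroup of $S_{n}$ other than $A_{n}$ has index at least $n$ then forces $G_{1}\geq A_{B}$. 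Without this small-index argument, or a substitute for it, you cannot pass from knowledge of $\overline{G}^{\left( k\right) }$ to the conclusion $G\leq_{\operatorname{sd}}A_{B}\times L$ or $G\leq_{\operatorname{sd}}S_{B}\times L$, so the converse direction remains unproven.
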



Note that the set $D$ in the theorem above is much smaller than $B$, thus $B$
is a \textquotedblleft big\textquotedblright\ subset of $\mathbf{n}$, and
$L\leq S_{D}$ is a \textquotedblleft little group\textquotedblright, hence the
notation. The subdirect product $G<_{\operatorname{sd}}S_{B}\times L$ is not
determined by $B$ and $L$, but in Proposition~\ref{prop subdirect A_B and S_B}
we give a fairly concrete description of these groups.
Proposition~\ref{prop closure of subdirect product} shows that the groups
given in Theorem~\ref{thm main} are indeed not Galois closed over $\mathbf{k}$
(and that their Galois closure is $S_{B}\times L$). In
Section~\ref{section proof of thm main} we will prove that these are the only
non-closed groups; however, already in this section we present the proof for
the case $d=1$ (i.e.,\ $k=n-1$), which illustrates the main ideas of the proof
of the general case.


\begin{center}
\vspace{1.5ex} \textbf{(A) The case $k=n-1$ }
\end{center}

From (\ref{eq closure via orbits}) we can derive the following useful formula
for the Galois closure of a group, which has been discovered independently by
K.~Kearnes \cite{Kearnes}. Here $\left(  S_{n}\right)  _{a}$ denotes the
stabilizer of $a\in\mathbf{k}^{n}$ under the action of $S_{n}$. Note that this
stabilizer is the direct product of symmetric groups on the sets $\left\{
i\in\mathbf{n}\mid a_{i}=j\right\}  $, $j\in\mathbf{k}$.

\begin{proposition}
\label{prop kearnes}For every $G\leq S_{n}$, we have%
\[
\overline{G}^{\left(  k\right)  }=\bigcap_{a\in\mathbf{k}^{n}}\left(
S_{n}\right)  _{a}\cdot G.
\]

\end{proposition}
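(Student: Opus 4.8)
The plan is to read the result straight off the orbit description of the closure given in \eqref{eq closure via orbits}. By that formula, a permutation $\sigma$ belongs to $\overline{G}^{\left(k\right)}$ precisely when $a^{\sigma}\in a^{G}$ holds for every $a\in\mathbf{k}^{n}$. Hence it suffices to fix an arbitrary $a\in\mathbf{k}^{n}$, identify the set $\left\{\sigma\in S_{n}\mid a^{\sigma}\in a^{G}\right\}$ with the product $\left(S_{n}\right)_{a}\cdot G$, and then intersect over all $a$. Thus the whole proof reduces to a single pointwise translation of ``$a^{\sigma}$ lies in the orbit $a^{G}$'' into coset language, after which the intersection is taken verbatim.

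The one elementary ingredient I would record first is the orbit--stabilizer bookkeeping for the action of $S_{n}$ on $\mathbf{k}^{n}$: for $\sigma,\tau\in S_{n}$ one has $a^{\sigma}=a^{\tau}$ if and only if $a^{\sigma\tau^{-1}}=a$, i.e.\ $\sigma\tau^{-1}\in\left(S_{n}\right)_{a}$, so that $a^{\sigma}=a^{\tau}$ exactly when $\sigma$ and $\tau$ lie in the same coset of the stabilizer $\left(S_{n}\right)_{a}$. Feeding this into the orbit condition gives the chain $a^{\sigma}\in a^{G}\iff\exists\,\tau\in G:\ a^{\sigma}=a^{\tau}\iff\exists\,\tau\in G:\ \sigma\tau^{-1}\in\left(S_{n}\right)_{a}\iff\exists\,\tau\in G:\ \sigma\in\left(S_{n}\right)_{a}\tau\iff\sigma\in\left(S_{n}\right)_{a}\cdot G$. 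Therefore $\left\{\sigma\mid a^{\sigma}\in a^{G}\right\}=\left(S_{n}\right)_{a}\cdot G$, and intersecting over all $a\in\mathbf{k}^{n}$ yields $\overline{G}^{\left(k\right)}=\bigcap_{a\in\mathbf{k}^{n}}\left(S_{n}\right)_{a}\cdot G$, as claimed.

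I do not expect a genuine obstacle here; the proposition is essentially a restatement of \eqref{eq closure via orbits}. The only points that require care are routine: one must keep straight the direction of the action and the side on which the stabilizer cosets appear, since the convention fixed in Section~\ref{sec2}(B) is exactly what makes the relevant cosets come out on the correct side and produces the product $\left(S_{n}\right)_{a}\cdot G$ rather than $G\cdot\left(S_{n}\right)_{a}$. It is also worth remarking, though nothing in the argument needs it, that an individual set $\left(S_{n}\right)_{a}\cdot G$ is in general \emph{not} a subgroup of $S_{n}$; this is harmless, because the general theory of the Galois connection already guarantees that the intersection on the right-hand side equals the subgroup $\overline{G}^{\left(k\right)}$. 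Finally, one may observe that $\left(S_{n}\right)_{a}$ is the Young subgroup $\prod_{j\in\mathbf{k}}S_{\left\{i\in\mathbf{n}\mid a_{i}=j\right\}}$ noted before the statement, so that the intersection effectively ranges over the ordered partitions of $\mathbf{n}$ into at most $k$ blocks; this reformulation is exactly what makes the formula convenient for the case analysis in the subsequent propositions.
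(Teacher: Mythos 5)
Your proposal is correct and follows essentially the same route as the paper: both proofs start from the orbit description \eqref{eq closure via orbits} and translate, for each fixed $a\in\mathbf{k}^{n}$, the condition $a^{\sigma}\in a^{G}$ into $\sigma\in\left(S_{n}\right)_{a}\cdot G$ via the equivalence $a^{\sigma}=a^{\pi}\iff\sigma\pi^{-1}\in\left(S_{n}\right)_{a}$, and then intersect over all $a$. Your additional remarks (that $\left(S_{n}\right)_{a}\cdot G$ need not be a subgroup, and the Young-subgroup form of the stabilizer) are accurate but not needed for the argument.
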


\begin{proof}
We reformulate the condition $a^{\sigma}\in a^{G}$ of
(\ref{eq closure via orbits}) for $a\in\mathbf{k}^{n},\sigma\in S_{n}$ as
follows:%
\begin{align*}
a^{\sigma}\in a^{G}  &  \iff\exists\pi\in G:~a^{\sigma}=a^{\pi}\\
&  \iff\exists\pi\in G:a^{\sigma\pi^{-1}}=a\\
&  \iff\exists\pi\in G:\sigma\pi^{-1}\in\left(  S_{n}\right)  _{a}\\
&  \iff\sigma\in\left(  S_{n}\right)  _{a}\cdot G.
\end{align*}
Now from (\ref{eq closure via orbits}) it follows that $\sigma\in\overline
{G}^{\left(  k\right)  }$ if and only if $\sigma\in\left(  S_{n}\right)
_{a}\cdot G$ holds for all $a\in\mathbf{k}^{n}$.
\end{proof}

With the help of Proposition~\ref{prop kearnes}, we can prove that all
subgroups of $S_{n}$ are Galois closed over $\mathbf{k}$ if and only if $k\geq
n$.

\begin{proposition}
\label{prop3.3} \label{prop k>=n} If $k\geq n\geq2$, then each subgroup $G\leq
S_{n}$ is Galois closed over $\mathbf{k}$; if $2\leq k<n$, then $A_{n}$ is not
Galois closed over $\mathbf{k}$.
\end{proposition}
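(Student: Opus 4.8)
The plan is to derive both assertions directly from the Kearnes formula in Proposition~\ref{prop kearnes}, exploiting the structural description of the stabilizer $\left(S_{n}\right)_{a}$ recalled just before that proposition: namely, that $\left(S_{n}\right)_{a}$ is the direct product of the symmetric groups on the level sets $\left\{i\in\mathbf{n}\mid a_{i}=j\right\}$, $j\in\mathbf{k}$. In both directions the whole argument reduces to controlling a single factor (or all factors) in the intersection $\overline{G}^{\left(k\right)}=\bigcap_{a\in\mathbf{k}^{n}}\left(S_{n}\right)_{a}\cdot G$.

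For the first claim, where $k\geq n$, I note that the inclusion $\overline{G}^{\left(k\right)}\supseteq G$ holds automatically for any closure operator, so it suffices to establish the reverse inclusion. The key observation is that when $k\geq n$ there exists a tuple $a\in\mathbf{k}^{n}$ whose coordinates are pairwise distinct, for instance $a=\left(1,2,\ldots,n\right)$. For such a tuple every level set is a singleton, so all the symmetric groups in the product are trivial, whence $\left(S_{n}\right)_{a}=\left\{\mathrm{id}\right\}$ and therefore $\left(S_{n}\right)_{a}\cdot G=G$. Since $\overline{G}^{\left(k\right)}$ is the intersection over all $a\in\mathbf{k}^{n}$ of the sets $\left(S_{n}\right)_{a}\cdot G$, it is in particular contained in this one factor, giving $\overline{G}^{\left(k\right)}\subseteq G$ and hence equality.

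For the second claim, where $2\leq k<n$, the strategy is opposite: I would show that \emph{every} factor $\left(S_{n}\right)_{a}\cdot A_{n}$ already equals $S_{n}$, so that the intersection is $S_{n}$ as well. Here the pigeonhole principle enters. Since $n>k$, every tuple $a\in\mathbf{k}^{n}$ must repeat some value among its $n$ coordinates, so at least one level set has at least two elements, and consequently $\left(S_{n}\right)_{a}$ contains a transposition---an odd permutation. Because $A_{n}$ is normal of index $2$ in $S_{n}$, the product $\left(S_{n}\right)_{a}\cdot A_{n}$ is a subgroup of $S_{n}$ that properly contains $A_{n}$, and therefore coincides with $S_{n}$. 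Taking the intersection over all $a$ yields $\overline{A_{n}}^{\left(k\right)}=S_{n}\neq A_{n}$, so $A_{n}$ is not Galois closed over $\mathbf{k}$.

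Neither direction presents a serious obstacle once Proposition~\ref{prop kearnes} is in hand; the argument is essentially a matter of choosing the right tuples---one distinct-coordinate tuple in the first part, and invoking the pigeonhole bound uniformly in the second. The only point that warrants a little care is the final step of the second part, where one must use the normality of $A_{n}$ to guarantee that $\left(S_{n}\right)_{a}\cdot A_{n}$ is a subgroup, so that containing a single odd permutation is enough to force it to be all of $S_{n}$.
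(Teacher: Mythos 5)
Your proof is correct and follows essentially the same route as the paper: both parts apply Proposition~\ref{prop kearnes}, using a tuple with pairwise distinct entries (trivial stabilizer) for $k\geq n$, and the pigeonhole observation that every stabilizer contains a transposition, forcing $\left(S_{n}\right)_{a}\cdot A_{n}=S_{n}$, for $k<n$. Your appeal to normality in the last step is fine but not even needed, since for an odd transposition $\tau$ the set product already contains $A_{n}\cup\tau A_{n}=S_{n}$ as a union of the two cosets.
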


\begin{proof}
Clearly, if $k\geq n$ then there exists a tuple $a\in\mathbf{k}^{n}$ whose
components are pairwise different. Consequently, $(S_{n})_{a}$ is trivial and
therefore $\overline{G}^{\left(  k\right)  }\subseteq(S_{n})_{a}\cdot G=G$ for
all $G\leq S_{n}$ by Proposition~\ref{prop kearnes}. On the other hand, if
$k<n$ then there is a repetition in every tuple $a\in\mathbf{k}^{n}$, hence
$\left(  S_{n}\right)  _{a}$ contains a transposition. Therefore $\left(
S_{n}\right)  _{a}\cdot A_{n}=S_{n}$ for all $a\in\mathbf{k}^{n}$, thus
$\overline{A_{n}}^{\left(  k\right)  }=S_{n}$ by
Proposition~\ref{prop kearnes}.
\end{proof}

Now we can solve Problem~\ref{problem main} in the case $k=n-1$, which is the
simplest nontrivial case. The proof of the following proposition already
contains the key steps of the proof of Theorem~\ref{thm main}.

\begin{proposition}
\label{prop k=n-1}For $k=n-1\geq2$, each subgroup of $S_{n}$ except $A_{n}$ is
Galois closed over $\mathbf{k}$.
\end{proposition}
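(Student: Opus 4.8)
The plan is to show that for $k=n-1$, the group $A_n$ is not Galois closed (already established in Proposition~\ref{prop k>=n}), and that every other subgroup $G\leq S_n$ with $G\neq A_n$ satisfies $\overline{G}^{(k)}=G$. Since $\overline{A_n}^{(n-1)}=S_n$ by the second half of Proposition~\ref{prop k>=n}, and $S_n$ is trivially closed (it is the top element), the essential task is to verify closure for every proper subgroup $G\lneq S_n$ with $G\neq A_n$. I would work directly from the formula $\overline{G}^{(k)}=\bigcap_{a\in\mathbf{k}^n}(S_n)_a\cdot G$ of Proposition~\ref{prop kearnes}. The key structural observation is that when $k=n-1$, a tuple $a\in\mathbf{k}^n$ uses $n$ coordinate positions but only $n-1$ possible values, so by pigeonhole exactly one value is repeated. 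The generic case is a tuple in which precisely one value occurs twice (say on positions $\{i,j\}$) and all others occur once; then $(S_n)_a$ is exactly the order-two group $\{\mathrm{id},(i\,j)\}$ generated by the single transposition $(i\,j)$. Tuples with more repetition give larger stabilizers, but they only enlarge the intersectands, so the binding constraints come from these generic tuples.

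Thus the plan reduces to analyzing $\bigcap_{i<j}\bigl(\{\mathrm{id},(i\,j)\}\cdot G\bigr)$, the intersection taken over all transpositions $\tau=(i\,j)$ (since each such $\tau$ arises as $(S_n)_a$ for a suitable generic tuple $a$). A permutation $\sigma$ lies in $\overline{G}^{(n-1)}$ iff for every transposition $\tau$ we have $\sigma\in\langle\tau\rangle\cdot G$, i.e.\ iff $\sigma\in G$ or $\tau\sigma\in G$ for each $\tau$. So I would fix $\sigma\in\overline{G}^{(n-1)}\setminus G$ and derive a contradiction unless $G=A_n$. For such a $\sigma$, the condition forces $\tau\sigma\in G$ for every transposition $\tau$. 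Picking two distinct transpositions $\tau_1,\tau_2$ gives $\tau_1\sigma,\tau_2\sigma\in G$, hence $\tau_1\tau_2\in G$; letting $\tau_1,\tau_2$ range over all transpositions shows $G$ contains every product of two transpositions, i.e.\ $A_n\leq G$. Since $\sigma\notin G$ but $\tau\sigma\in G\supseteq A_n$, the coset $\sigma G$ is the non-identity coset, so $\sigma$ is odd and $G$ has index two, forcing $G=A_n$ (as $A_n$ is the unique index-two subgroup of $S_n$ for $n\geq2$). This contradicts $G\neq A_n$, so no such $\sigma$ exists and $G$ is closed.

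I would be careful about the small technical point that each transposition $(i\,j)$ genuinely occurs as a full stabilizer: for $k=n-1$ one can exhibit $a$ with $a_i=a_j$ and all other entries distinct and different from $a_i$, which is possible precisely because there are $n-1\geq 2$ available values for the $n-1$ distinct required entries. The step I expect to require the most care is the combinatorial passage from ``$\tau\sigma\in G$ for all transpositions $\tau$'' to ``$A_n\leq G$ and $\sigma$ odd'': one must confirm that products $\tau_1\tau_2$ of transpositions generate all of $A_n$ (true for $n\geq 3$, using $3$-cycles and products of two disjoint transpositions as generators) and handle the edge parity bookkeeping cleanly. Once $A_n\leq G\lneq S_n$ is forced together with $\sigma\notin G$, the conclusion $G=A_n$ is immediate, and the overall statement follows. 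This is exactly the argument pattern that will generalize in Section~\ref{section proof of thm main}, with single transpositions replaced by the richer stabilizers arising for larger $d$.
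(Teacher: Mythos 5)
Your proof is correct and takes essentially the same route as the paper's: both rest on Proposition~\ref{prop kearnes} applied to tuples in which exactly one value is repeated twice, so that the stabilizer is $\{\operatorname{id},(i\,j)\}$, and both reduce to the fact that the resulting condition forces $G$ to contain $A_n$. The only difference is bookkeeping at the end: the paper first concludes $\overline{G}^{\left(k\right)}=S_n$ and then uses $S_n=\{\operatorname{id},(i\,j)\}\cdot G$ to get $[S_n:G]\le 2$, whereas you derive $A_n\le G$ directly from $\tau_1\sigma,\tau_2\sigma\in G\Rightarrow\tau_1\tau_2\in G$ --- the same underlying index-two fact in a slightly different order.
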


\begin{proof}
If $G\leq S_{n}$ is not Galois closed over $\mathbf{k}$, then
Proposition~\ref{prop kearnes} shows that for all $\pi\in\overline{G}^{\left(
k\right)  }\setminus G$ and for all $a\in\mathbf{k}^{n}$, we have $\pi
\in\left(  S_{n}\right)  _{a}\cdot G$, hence $\pi=\gamma\sigma$ for some
$\gamma\in\left(  S_{n}\right)  _{a}$ and $\sigma\in G$. Therefore,
$\gamma=\pi\sigma^{-1}\in\overline{G}^{\left(  k\right)  }$; moreover,
$\gamma\neq\operatorname*{id}$ follows from $\pi\notin G$. Thus we see that
$\overline{G}^{\left(  k\right)  }$ contains at least one non-identity
permutation from every stabilizer:%
\begin{equation}
\overline{G}^{\left(  k\right)  }\neq G\implies\forall a\in\mathbf{k}%
^{n}~\exists\gamma\in\left(  S_{n}\right)  _{a}\setminus\left\{
\operatorname*{id}\right\}  :~\gamma\in\overline{G}^{\left(  k\right)  }.
\label{eq key idea}%
\end{equation}

Now fix $i,j\in\mathbf{n}$, $i\neq j$, and let $a=(a_{1},\dots,a_{n}%
)\in\mathbf{k}^{n}$ be a tuple such that $a_{r}=a_{s}\iff\{r,s\}=\{i,j\}\text{
or }r=s$. Then $\left(  S_{n}\right)  _{a}=\left\{  \operatorname*{id},\left(
ij\right)  \right\}  $, where $\left(  ij\right)  \in S_{n}$ denotes the
transposition of $i$ and $j$. Applying (\ref{eq key idea}), we see that
$\left(  ij\right)  \in\overline{G}^{\left(  k\right)  }$ for all
$i,j\in\mathbf{n}$, hence $\overline{G}^{\left(  k\right)  }=S_{n}$. From
Proposition~\ref{prop kearnes} it follows that $\overline{G}^{\left(
k\right)  }\subseteq\left(  S_{n}\right)  _{a}\cdot G\subseteq S_{n}%
=\overline{G}^{\left(  k\right)  }$, i.e., $S_{n}=\left(  S_{n}\right)
_{a}\cdot G$ for every $a\in\mathbf{k}^{n}$. Choosing $a$ as above, we have
$S_{n}=\left\{  \operatorname*{id},\left(  ij\right)  \right\}  \cdot G$,
hence $G$ is of index at most $2$ in $S_{n}$. Therefore, we have either
$G=A_{n}$ or $G=S_{n}$; the latter is obviously Galois closed, whereas $A_{n}$
is not Galois closed over $\mathbf{k}$ by Proposition~\ref{prop k>=n}.
\end{proof}

Clote and Kranakis \cite{CloK91} define a group $G\leq S_{n}$ to be
\emph{weakly representable}, if there exist positive integers $k,m$ with
$2\leq k<n$ and $2\leq m$ such that $G$ is the invariance group of some
function $f\colon\mathbf{k}^{n}\rightarrow\mathbf{m}$ (equivalently, $G$ is
$\left(  k,\infty\right)  $-representable for some $k<n$).
Proposition~\ref{prop k>=n} shows that the restriction $k<n$ is important;
allowing $k=n$ would make all groups weakly representable.
Proposition~\ref{prop k=n-1} yields a complete description of weakly
representable groups.

\begin{corollary}
All subgroups of $G\leq S_{n}$ except for $A_{n}$ are weakly representable.
\end{corollary}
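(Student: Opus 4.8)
The plan is to read the corollary off from Proposition~\ref{prop k=n-1} together with Fact~\ref{fact closed vs repr}, after unwinding the definition of weak representability. By definition, a group $G\le S_n$ is weakly representable exactly when it is $\left(k,\infty\right)$-representable for some $k$ with $2\le k<n$, and Fact~\ref{fact closed vs repr} identifies $\left(k,\infty\right)$-representability with being Galois closed over $\mathbf{k}$. So it suffices to exhibit, for each $G\neq A_n$, a single value of $k$ in the range $2\le k<n$ for which $G$ is Galois closed over $\mathbf{k}$.

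The natural choice is the largest admissible value $k=n-1$, since closure gets easier as $k$ grows (recall the monotonicity $\overline{G}^{\left(k_1\right)}\le\overline{G}^{\left(k_2\right)}$ for $k_1\ge k_2$ noted in the introduction, so that closure over a larger domain follows from closure over a smaller one). First I would assume $n\ge 3$, so that $k=n-1$ indeed satisfies $2\le k<n$; the case $n=2$ is degenerate, as there is then no admissible $k$ at all and the statement is to be read vacuously. For $n\ge 3$, Proposition~\ref{prop k=n-1} says that every subgroup of $S_n$ other than $A_n$ is Galois closed over $\mathbf{k}$ with $k=n-1$. Feeding this into Fact~\ref{fact closed vs repr} shows that every such $G$ is $\left(n-1,\infty\right)$-representable, and since $n-1<n$ this is precisely weak representability. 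This already proves the stated corollary.

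To justify the surrounding assertion that Proposition~\ref{prop k=n-1} yields a \emph{complete} description, I would also verify that $A_n$ itself is not weakly representable. This is immediate from Proposition~\ref{prop k>=n}: for every $k$ with $2\le k<n$ that proposition gives $\overline{A_n}^{\left(k\right)}=S_n\neq A_n$, so $A_n$ is Galois closed over no $\mathbf{k}$ with $k<n$, and hence by Fact~\ref{fact closed vs repr} it is not $\left(k,\infty\right)$-representable for any such $k$. There is essentially no obstacle in this argument, which is why it is stated as a corollary; the only point requiring a little care is the boundary behaviour, namely that weak representability forces $k\le n-1$. This is exactly what makes the top value $k=n-1$ supplied by Proposition~\ref{prop k=n-1} the relevant one, and what requires the degenerate case $n=2$ to be set aside.
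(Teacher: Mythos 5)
Your proof is correct and takes essentially the same route as the paper: both unwind weak representability via Fact~\ref{fact closed vs repr} into Galois closure over $\mathbf{k}$ for some $2\leq k<n$, reduce to the single value $k=n-1$ using the monotonicity of the closures, and then invoke Proposition~\ref{prop k=n-1}. The only cosmetic differences are that the paper cites the descending chain (\ref{eq chain of closures}) for the reduction while you appeal to Proposition~\ref{prop k>=n} directly for the non-representability of $A_{n}$, and that you additionally flag the degenerate case $n=2$.
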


\begin{proof}
According to Fact~\ref{fact closed vs repr}, a subgroup of $S_{n}$ is weakly
representable if and only if it is Galois closed over $\mathbf{k}$ for some
$k<n$. This is equivalent to being Galois closed over $\mathbf{n-1}$, as the
closures for $k=2,3,\ldots,n-1$ form a descending chain (see
(\ref{eq chain of closures}) in Section~\ref{section computations}). From
Proposition~\ref{prop k=n-1} it follows that all subgroups of $S_{n}$ are
Galois closed over $\mathbf{n-1}$ except for $A_{n}$.
\end{proof}

\begin{center}

\vspace{1.5ex} \textbf{(B) Closures of direct and subdirect products }
\end{center}

The following proposition describes closures of direct products, and, as a
corollary, we obtain a generalization of \cite[Theorem 3.1]{Kis1998}.

\begin{proposition}
\label{prop closure of direct product}For all $G\leq S_{B}$ and $H\leq S_{D}$,
we have $\overline{G\times H}^{\left(  k\right)  }=\overline{G}^{\left(
k\right)  }\times\overline{H}^{\left(  k\right)  }$.
\end{proposition}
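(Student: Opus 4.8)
The plan is to prove the two inclusions of $\overline{G\times H}^{(k)}=\overline{G}^{(k)}\times\overline{H}^{(k)}$ separately, using the orbit description \eqref{eq closure via orbits} as the main tool. The key structural fact is that because $B$ and $D$ are disjoint and $\mathbf{n}=B\cup D$, a tuple $a\in\mathbf{k}^n$ splits as $a=(a_B,a_D)$ into its restrictions to the $B$-coordinates and the $D$-coordinates, and the intransitive action of $\beta\times\delta\in S_B\times S_D$ acts on these two blocks independently. Consequently the orbit of $a$ under the direct product factors as a product of orbits: $a^{G\times H}=(a_B)^{G}\times(a_D)^{H}$, where here $(a_B)^G$ is the orbit in $\mathbf{k}^B$ and $(a_D)^H$ the orbit in $\mathbf{k}^D$. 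I would state and verify this orbit-factorization as the first lemma-like step, since it drives everything else.

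\medskip

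For the inclusion $\overline{G}^{(k)}\times\overline{H}^{(k)}\subseteq\overline{G\times H}^{(k)}$, I would take $\beta\in\overline{G}^{(k)}$ and $\delta\in\overline{H}^{(k)}$ and show $\beta\times\delta\in\overline{G\times H}^{(k)}$ via \eqref{eq closure via orbits}. Fix $a=(a_B,a_D)\in\mathbf{k}^n$. By definition of the closures over $\mathbf{k}$ applied separately on $B$ and on $D$, we have $(a_B)^{\beta}\in(a_B)^{G}$ and $(a_D)^{\delta}\in(a_D)^{H}$. Applying the orbit-factorization gives
\[
a^{\beta\times\delta}=\bigl((a_B)^{\beta},(a_D)^{\delta}\bigr)\in (a_B)^{G}\times(a_D)^{H}=a^{G\times H},
\]
which is exactly the condition $a^{\beta\times\delta}\in a^{G\times H}$ for all $a$, so $\beta\times\delta\in\overline{G\times H}^{(k)}$.

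\medskip

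The reverse inclusion $\overline{G\times H}^{(k)}\subseteq\overline{G}^{(k)}\times\overline{H}^{(k)}$ is where the real work lies, and I expect the main obstacle here. The first subtlety is that an element $\sigma\in\overline{G\times H}^{(k)}$ is a priori only an element of $S_n$, so I must first argue that $\sigma$ respects the block decomposition, i.e.\ $\sigma\in S_B\times S_D$, before I can even speak of its projections $\pi_1(\sigma)$ and $\pi_2(\sigma)$. This should follow by feeding suitable tuples into \eqref{eq closure via orbits}: choosing $a$ so that its $B$-entries and $D$-entries use disjoint sets of values from $\mathbf{k}$ forces every element of $a^{G\times H}$ to keep the $B$-block and $D$-block values segregated, and the condition $a^{\sigma}\in a^{G\times H}$ then forces $\sigma$ to map $B$ to $B$ and $D$ to $D$ (this is where the hypothesis that functions live on $\mathbf{k}$ with $k\ge 2$, so that two distinct values are available, is used). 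Having secured $\sigma=\beta\times\delta$ with $\beta\in S_B$, $\delta\in S_D$, I would then show $\beta\in\overline{G}^{(k)}$ and $\delta\in\overline{H}^{(k)}$: for any $b\in\mathbf{k}^B$ extend it to $a=(b,c)$ with an arbitrary $c\in\mathbf{k}^D$, apply $\sigma\in\overline{G\times H}^{(k)}$ to get $a^{\beta\times\delta}\in a^{G\times H}=b^{G}\times c^{H}$, and read off the $B$-component to conclude $b^{\beta}\in b^{G}$, giving $\beta\in\overline{G}^{(k)}$; symmetrically for $\delta$. The crux, and the step I would double-check most carefully, is the segregation argument producing $\sigma\in S_B\times S_D$, since it must hold for all admissible $\sigma$ regardless of $G$ and $H$ and relies on having enough domain values to separate the blocks.
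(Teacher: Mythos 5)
Your proposal is correct and follows essentially the same route as the paper: both hinge on the separating tuple (constant, distinct values on $B$ and on $D$) to force $\overline{G\times H}^{(k)}\subseteq S_B\times S_D$, and on the orbit factorization $a^{G\times H}=(a_B)^G\times(a_D)^H$ to identify the two closures. The only cosmetic difference is that you derive the block-preservation step directly from \eqref{eq closure via orbits}, whereas the paper obtains it by citing Proposition~\ref{prop kearnes} (which is itself just that same computation), and you split the equivalence chain into two inclusions.
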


\begin{proof}
For notational convenience, let us assume that $B=\left\{  1,\ldots,t\right\}
$ and $D=\left\{  t+1,\ldots,n\right\}  $. If $a=\left(  1,\ldots
,1,2,\ldots,2\right)  \in\mathbf{k}^{n}$ with $t$ ones followed by $n-t$ twos,
then the stabilizer of $a$ in $S_{n}$ is $S_{B}\times S_{D}$. Hence from
Proposition~\ref{prop kearnes} it follows that $\overline{G\times H}^{\left(
k\right)  }\leq\left(  S_{B}\times S_{D}\right)  \cdot\left(  G\times
H\right)  =S_{B}\times S_{D}$, i.e., every element of $\overline{G\times
H}^{\left(  k\right)  }$ is of the form $\beta\times\delta$ for some $\beta\in
S_{B},\delta\in S_{D}$. For arbitrary $a=\left(  a_{1},\ldots,a_{n}\right)
\in\mathbf{k}^{n}$, let $a_{B}=\left(  a_{1},\ldots,a_{t}\right)
\in\mathbf{k}^{t}$ and $a_{D}=\left(  a_{t+1},\ldots,a_{n}\right)
\in\mathbf{k}^{n-t}$. It is straightforward to verify that $a^{\beta
\times\delta}\in a^{G\times H}$ if and only if $a_{B}^{\beta}\in a_{B}^{G}$
and $a_{D}^{\delta}\in a_{D}^{H}$. Thus applying (\ref{eq closure via orbits}%
), we have%
\begin{align*}
\beta\times\delta\in\overline{G\times H}^{\left(  k\right)  }  &  \iff\forall
a\in\mathbf{k}^{n}:a^{\beta\times\delta}\in a^{G\times H}\\
&  \iff\forall a\in\mathbf{k}^{n}:\left(  a_{B}^{\beta}\in a_{B}^{G}\text{ and
}a_{D}^{\delta}\in a_{D}^{H}\right) \\
&  \iff\left(  \forall a_{B}\in\mathbf{k}^{t}:a_{B}^{\beta}\in a_{B}%
^{G}\right)  \text{ and }\left(  \forall a_{D}\in\mathbf{k}^{n-t}%
:a_{D}^{\delta}\in a_{D}^{H}\right) \\
&  \iff\beta\in\overline{G}^{\left(  k\right)  }\text{ and }\delta\in
\overline{H}^{\left(  k\right)  }\\
&  \iff\beta\times\delta\in\overline{G}^{\left(  k\right)  }\times\overline
{H}^{\left(  k\right)  }.%
\qedhere
\end{align*}

\end{proof}

\begin{corollary}
\label{cor GxH closed iff G and H closed}For all $G\leq S_{B}$ and $H\leq
S_{D}$, the direct product $G\times H$ is Galois closed over $\mathbf{k}$ if
and only if both $G$ and $H$ are Galois closed over $\mathbf{k}$.
\end{corollary}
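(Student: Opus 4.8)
The plan is to derive the corollary directly from Proposition~\ref{prop closure of direct product}, which identifies the closure of a direct product with the direct product of the closures. The statement to prove is that $G\times H$ is Galois closed over $\mathbf{k}$ if and only if both $G$ and $H$ are. Recall that being Galois closed means $\overline{X}^{(k)}=X$.

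First I would compute $\overline{G\times H}^{(k)}$ using Proposition~\ref{prop closure of direct product}, obtaining $\overline{G\times H}^{(k)}=\overline{G}^{(k)}\times\overline{H}^{(k)}$. For the backward direction, suppose both $G$ and $H$ are closed, i.e.\ $\overline{G}^{(k)}=G$ and $\overline{H}^{(k)}=H$. Substituting into the formula gives $\overline{G\times H}^{(k)}=G\times H$, so $G\times H$ is closed. This direction is immediate.

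For the forward direction, suppose $G\times H$ is closed, so that $\overline{G}^{(k)}\times\overline{H}^{(k)}=G\times H$. The task is to conclude that $\overline{G}^{(k)}=G$ and $\overline{H}^{(k)}=H$ separately. Here I would use that the closure operator is extensive, so $G\leq\overline{G}^{(k)}$ and $H\leq\overline{H}^{(k)}$, and hence $\overline{G}^{(k)}\times\overline{H}^{(k)}$ contains $G\times H$. Combined with the equality $\overline{G}^{(k)}\times\overline{H}^{(k)}=G\times H$, this forces the two direct products to coincide factorwise: projecting to each coordinate (via $\pi_1,\pi_2$ from Section~\ref{section preliminaries}(C)), the first factor of $\overline{G}^{(k)}\times\overline{H}^{(k)}$ is $\overline{G}^{(k)}$ and that of $G\times H$ is $G$, so $\overline{G}^{(k)}=G$; symmetrically $\overline{H}^{(k)}=H$.

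The argument is essentially a formal manipulation, and I expect no genuine obstacle; the only point requiring a moment of care is the elementary fact that if $A_1\times A_2 = B_1\times B_2$ as subsets of $S_B\times S_D$ with $B_i\leq A_i$, then $A_i=B_i$ for each $i$. This follows because the first projection of a direct product $A_1\times A_2$ equals $A_1$ (as $A_2$ is nonempty, containing the identity), and likewise for the second, so equality of the products yields equality of the projections. Thus the corollary is a direct consequence of Proposition~\ref{prop closure of direct product} together with the extensivity of the closure operator.
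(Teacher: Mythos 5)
Your proposal is correct and follows essentially the same route as the paper: both directions are derived from Proposition~\ref{prop closure of direct product}, with the backward direction immediate and the forward direction obtained by reading off the factors from the equality $\overline{G}^{\left(k\right)}\times\overline{H}^{\left(k\right)}=G\times H$. The only difference is that you spell out (via projections and nonemptiness of the factors) the step the paper treats as obvious, which is a fine elaboration rather than a different argument.
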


\begin{proof}
The \textquotedblleft if\textquotedblright\ part follows immediately from
Proposition~\ref{prop closure of direct product}. For the \textquotedblleft
only if\textquotedblright\ part, assume that $G\times H$ is Galois closed over
$\mathbf{k}$. From Proposition~\ref{prop closure of direct product} we get
$G\times H=\overline{G}^{\left(  k\right)  }\times\overline{H}^{\left(
k\right)  }$, and this implies $G=\overline{G}^{\left(  k\right)  }$ and
$H=\overline{H}^{\left(  k\right)  }$.
\end{proof}

\begin{remark}
\label{remark fixed point}If $n<m$, then any subgroup $G$ of $S_{n}$ can be
naturally embedded into $S_{m}$ as the subgroup $G\times\left\{
\operatorname{id}_{\mathbf{m}\setminus\mathbf{n}}\right\}  $. From
Proposition~\ref{prop closure of direct product} it follows that
$\overline{G\times\left\{  \operatorname{id}_{\mathbf{m}\setminus\mathbf{n}%
}\right\}  }^{\left(  k\right)  }=\overline{G}^{\left(  k\right)  }%
\times\left\{  \operatorname{id}_{\mathbf{m}\setminus\mathbf{n}}\right\}  $,
i.e., there is no danger of ambiguity in not specifying whether we regard $G$
as a subgroup of $S_{n}$ or as a subgroup of $S_{m}$.
\end{remark}

\begin{remark}
Proposition~\ref{prop closure of direct product} and
Corollary~\ref{cor GxH closed iff G and H closed} do not generalize to
subdirect products. It is possible that a subdirect product of two Galois
closed groups is not Galois closed. For example, let
\[
G=\left\{  \operatorname*{id},\left(  123\right)  ,\left(  132\right)
,\left(  12\right)  \left(  45\right)  ,\left(  13\right)  \left(  45\right)
,\left(  23\right)  \left(  45\right)  \right\}  <_{\operatorname{sd}%
}S_{\left\{  1,2,3\right\}  }\times S_{\left\{  4,5\right\}  };
\]
then $\overline{G}^{\left(  2\right)  }=S_{\left\{  1,2,3\right\}  }\times
S_{\left\{  4,5\right\}  }$, hence $G$ is not Galois closed over $\mathbf{2}$.
It is also possible that a subdirect product is closed, although the factors
are not both closed: let%
\[
G=\left\{  \operatorname*{id},\left(  13\right)  \left(  24\right)  ,\left(
1234\right)  \left(  56\right)  ,\left(  1432\right)  \left(  56\right)
\right\}  <_{\operatorname{sd}}\langle\left(  1234\right)  \rangle
\times\langle(56)\rangle;
\]
then $G$ is Galois closed over $\mathbf{2}$, but the $4$-element cyclic group
is not Galois closed over $\mathbf{2}$ (its Galois closure is the dihedral
group of degree $4$).


\end{remark}

Next we determine (the closures of) the special subdirect products involving
symmetric and alternating groups that appear in Theorem~\ref{thm main}.

\begin{proposition}
\label{prop subdirect A_B and S_B}Let $\left\vert B\right\vert >\max\left(
\left\vert D\right\vert ,4\right)  $ and $L\leq S_{D}$. If $G\leq
_{\operatorname{sd}}A_{B}\times L$, then $G=A_{B}\times L$. If $G\leq
_{\operatorname{sd}}S_{B}\times L$, then either $G=S_{B}\times L$, or there
exists a subgroup $L_{0}\leq L$ of index $2$, such that%
\begin{equation}
G=\left(  A_{B}\times L_{0}\right)  \cup%
\bigl(%
\left(  S_{B}\setminus A_{B}\right)  \times\left(  L\setminus L_{0}\right)
\bigr)%
. \label{eq subdirect}%
\end{equation}

\end{proposition}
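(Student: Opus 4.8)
The plan is to apply the Klein--Remak description of subdirect products (Theorem~\ref{thm subdirect}) and then exploit the normal-subgroup structure of $S_{B}$ together with the simplicity of $A_{B}$, which we may invoke since $\left\vert B\right\vert >4$ forces $\left\vert B\right\vert \geq 5$. In both statements the strategy is the same: the common quotient $K$ supplied by Theorem~\ref{thm subdirect} is severely constrained, and classifying its possibilities immediately yields the claimed form of $G$.

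For the first statement, suppose $G\leq_{\operatorname{sd}}A_{B}\times L$. By Theorem~\ref{thm subdirect} there is a group $K$ with surjective homomorphisms $\varphi_{1}\colon A_{B}\to K$ and $\varphi_{2}\colon L\to K$. Since $A_{B}$ is simple, $\ker\varphi_{1}$ is either trivial or all of $A_{B}$, so $K$ is isomorphic to $A_{B}$ or is trivial. The existence of a surjection $\varphi_{2}\colon L\to K$ forces $\left\vert K\right\vert \leq\left\vert L\right\vert \leq\left\vert D\right\vert !$, whereas $\left\vert A_{B}\right\vert =\left\vert B\right\vert !/2>\left\vert D\right\vert !$ because $\left\vert B\right\vert >\left\vert D\right\vert$ (the quotient $\left\vert B\right\vert !/\left\vert D\right\vert !$ is a product of consecutive integers topped by $\left\vert B\right\vert \geq 5$). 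Hence $K\cong A_{B}$ is impossible, $K$ is trivial, and the remark following Theorem~\ref{thm subdirect} gives $G=A_{B}\times L$.

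For the second statement, suppose $G\leq_{\operatorname{sd}}S_{B}\times L$, and again produce $K$ with surjections $\varphi_{1}\colon S_{B}\to K$ and $\varphi_{2}\colon L\to K$. Because $\left\vert B\right\vert \geq 5$, the only normal subgroups of $S_{B}$ are $\left\{\operatorname{id}\right\}$, $A_{B}$, and $S_{B}$, so $\ker\varphi_{1}$ is one of these and $K$ is isomorphic to $S_{B}$, to $S_{B}/A_{B}\cong C_{2}$, or is trivial, respectively. The case $K\cong S_{B}$ is ruled out by the same cardinality comparison as above, since $\left\vert S_{B}\right\vert =\left\vert B\right\vert !>\left\vert D\right\vert !\geq\left\vert L\right\vert$. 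If $K$ is trivial we obtain $G=S_{B}\times L$. In the remaining case $K\cong C_{2}$, the map $\varphi_{1}$ must be the sign homomorphism (the unique surjection $S_{B}\to C_{2}$, whose kernel is $A_{B}$), and putting $L_{0}:=\ker\varphi_{2}$ yields a subgroup of index $2$ in $L$. Translating the defining condition $\varphi_{1}(s)=\varphi_{2}(l)$ into membership statements---$\varphi_{1}(s)$ is the identity of $K$ exactly when $s\in A_{B}$, and $\varphi_{2}(l)$ is the identity exactly when $l\in L_{0}$---gives precisely the description in~(\ref{eq subdirect}).

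The argument is essentially structural, and the only genuine step requiring care is checking the cardinality inequalities that eliminate $K\cong A_{B}$ and $K\cong S_{B}$; this is where the hypothesis $\left\vert B\right\vert >\left\vert D\right\vert$ enters, while $\left\vert B\right\vert >4$ is what licenses the appeal to simplicity of $A_{B}$ and to the known normal-subgroup lattice of $S_{B}$. I do not anticipate any serious obstacle beyond this bookkeeping: once Klein--Remak reduces the problem to identifying the common quotient $K$, the short list of normal subgroups of $S_{B}$ and $A_{B}$ pins down $G$ completely.
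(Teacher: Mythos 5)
Your proof is correct and follows essentially the same route as the paper's: invoke the Klein--Remak theorem, classify the possible kernels of $\varphi_{1}$ using the simplicity of $A_{B}$ (respectively the normal-subgroup lattice of $S_{B}$, valid since $\left\vert B\right\vert \geq 5$), rule out $K\cong A_{B}$ and $K\cong S_{B}$ by the cardinality comparison $\left\vert L\right\vert \leq\left\vert S_{D}\right\vert <\left\vert A_{B}\right\vert$, and read off~(\ref{eq subdirect}) from the condition $\varphi_{1}(g_{1})=\varphi_{2}(g_{2})$ when $K\cong C_{2}$. The only difference is that you spell out the cardinality inequality and the identification of $\varphi_{1}$ with the sign map in slightly more detail than the paper does.
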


\begin{proof}
Suppose that $G\leq_{\operatorname{sd}}A_{B}\times L$, and let $K$ and
$\varphi_{1},\varphi_{2}$ be as in Theorem~\ref{thm subdirect} (for
$G_{1}=A_{B}$ and $G_{2}=L$). Since $A_{B}$ is simple, the kernel of
$\varphi_{1}$ is either $\left\{  \operatorname{id}_{B}\right\}  $ or $A_{B}$.
In the first case, $K$ is isomorphic to $A_{B}$; however, this cannot be a
homomorphic image of $L$, as $| L|\leq|S_{D}|<|A_{B}|$.
In the second case, $K$ is trivial and $G=A_{B}\times L$. If $G\leq
_{\operatorname{sd}}S_{B}\times L$, then there are three possibilities for the
kernel of $\varphi_{1}$, namely $\left\{  \operatorname{id}_{B}\right\}  $,
$A_{B}$ and $S_{B}$. Just as above, the first case is impossible, while in the
third case we have $G=S_{B}\times L$. In the second case, $K$ is a two-element
group, hence by letting $L_{0}$ be the kernel of $\varphi_{2}$, we obtain
(\ref{eq subdirect}).
\end{proof}



\begin{proposition}
\label{prop closure of subdirect product} Let $|D|<d\leq n-d$ and let $G$ be
any one of the subdirect products considered in
Proposition~\ref{prop subdirect A_B and S_B}. Then $\overline{G}^{\left(
k\right)  }=S_{B}\times L$.
\end{proposition}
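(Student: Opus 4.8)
The plan is to squeeze $\overline{G}^{\left(k\right)}$ between $S_{B}\times L$ from above and below and show the two bounds agree. The driving combinatorial fact is that the hypothesis $\left\vert D\right\vert <d=n-k$ forces $\left\vert B\right\vert =n-\left\vert D\right\vert >k$, so that \emph{every} tuple $a_{B}\in\mathbf{k}^{B}$ has a repeated entry. A second consequence of the hypothesis is that $\left\vert D\right\vert <d\leq n-d=k$, so $L$ acts on a set of size smaller than $k$.

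For the upper bound I would argue purely by monotonicity of the closure operator. Each of the three groups $G$ from Proposition~\ref{prop subdirect A_B and S_B} satisfies $G\leq S_{B}\times L$, whence $\overline{G}^{\left(k\right)}\leq\overline{S_{B}\times L}^{\left(k\right)}$. By Proposition~\ref{prop closure of direct product} the right-hand side equals $\overline{S_{B}}^{\left(k\right)}\times\overline{L}^{\left(k\right)}$. Here $\overline{S_{B}}^{\left(k\right)}=S_{B}$ since $S_{B}$ is the full symmetric group on $B$, and $\overline{L}^{\left(k\right)}=L$ by Proposition~\ref{prop k>=n} applied to $L\leq S_{D}$ with $\left\vert D\right\vert <k$ (the degenerate cases $\left\vert D\right\vert \leq 1$ being trivial). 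Thus $\overline{G}^{\left(k\right)}\leq S_{B}\times L$.

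For the lower bound I would use the orbit description (\ref{eq closure via orbits}): it suffices to show that every $\pi=\beta\times\delta\in S_{B}\times L$ satisfies $a^{\pi}\in a^{G}$ for all $a=\left(a_{B},a_{D}\right)\in\mathbf{k}^{n}$, i.e.\ that one can find $\beta^{\prime}\times\delta^{\prime}\in G$ with $a_{B}^{\beta^{\prime}}=a_{B}^{\beta}$ and $a_{D}^{\delta^{\prime}}=a_{D}^{\delta}$. I would always set $\delta^{\prime}=\delta$, leaving only the $B$-coordinate to be matched. Since $a_{B}$ has a repeated entry, the stabilizer $\left(S_{B}\right)_{a_{B}}$ contains a transposition, hence contains odd permutations; consequently the coset $\left(S_{B}\right)_{a_{B}}\beta$, which is exactly the set of $\beta^{\prime}$ with $a_{B}^{\beta^{\prime}}=a_{B}^{\beta}$, meets both $A_{B}$ and $S_{B}\setminus A_{B}$. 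This is the crucial point: it lets me realize the required action on $a_{B}$ by a permutation of \emph{either} parity.

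The proof then closes case by case. If $G=A_{B}\times L$, I choose $\beta^{\prime}\in A_{B}$, so $\beta^{\prime}\times\delta\in G$. If $G$ is the proper subdirect product of (\ref{eq subdirect}), I match the parity of $\beta^{\prime}$ to the side containing $\delta$: when $\delta\in L_{0}$ I take $\beta^{\prime}$ even (landing in $A_{B}\times L_{0}$), and when $\delta\in L\setminus L_{0}$ I take $\beta^{\prime}$ odd (landing in $\left(S_{B}\setminus A_{B}\right)\times\left(L\setminus L_{0}\right)$); in both cases $\beta^{\prime}\times\delta\in G$. The case $G=S_{B}\times L$ needs nothing beyond the upper bound, since $G\leq\overline{G}^{\left(k\right)}\leq S_{B}\times L=G$. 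Combining the two bounds yields $\overline{G}^{\left(k\right)}=S_{B}\times L$ in every case. I expect no serious difficulty here; the only thing to watch is the parity bookkeeping in the subdirect case, and the essential idea — a repeated entry in $a_{B}$ produces odd elements in $\left(S_{B}\right)_{a_{B}}$ — is what makes it go through.
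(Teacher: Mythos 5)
Your proof is correct, and while your upper bound coincides with the paper's (monotonicity, Proposition~\ref{prop closure of direct product}, Proposition~\ref{prop k>=n} applied to $L\leq S_{D}$, and closedness of the full symmetric group $S_{B}$), your lower bound takes a genuinely different route. The paper never argues orbit-by-orbit at this point: it reuses the already-established fact $\overline{A_{B}}^{\left(k\right)}=S_{B}$ together with the direct-product formula to dispose of the cases $G=A_{B}\times L$ and $G=S_{B}\times L$, and for the proper subdirect product~(\ref{eq subdirect}) it sandwiches $S_{B}\times L_{0}=\overline{A_{B}\times L_{0}}^{\left(k\right)}\leq\overline{G}^{\left(k\right)}\leq S_{B}\times L$, observes that the first containment is strict (since $\overline{G}^{\left(k\right)}\supseteq G$ contains elements of $\left(S_{B}\setminus A_{B}\right)\times\left(L\setminus L_{0}\right)$), and finishes by the index-$2$ count. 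You instead verify the lower bound directly from~(\ref{eq closure via orbits}): the pigeonhole consequence $\left\vert B\right\vert>k$ puts a transposition in every stabilizer $\left(S_{B}\right)_{a_{B}}$, so the coset of $\beta$ modulo that stabilizer meets both parities, and matching the parity of $\beta^{\prime}$ to the side of $L_{0}$ containing $\delta$ handles all three cases uniformly. Your parity-matching is exactly the combinatorial mechanism that the paper packaged once inside Proposition~\ref{prop k>=n} (via Proposition~\ref{prop kearnes}) and then exploited only through algebraic consequences; deploying it directly buys you a self-contained and uniform argument that makes transparent why the closure cannot separate the subdirect product from the full product, at the cost of redoing the combinatorial core that the paper's sandwich-plus-index argument avoids. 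One pedantic point: whether $\left\{\beta^{\prime}\mid a_{B}^{\beta^{\prime}}=a_{B}^{\beta}\right\}$ is the left or the right coset of $\left(S_{B}\right)_{a_{B}}$ depends on the action convention (with the paper's convention in Proposition~\ref{prop kearnes} it is indeed $\left(S_{B}\right)_{a_{B}}\beta$), but this is immaterial since either coset contains permutations of both parities.
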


\begin{proof}
Since $k=n-d>\left\vert D\right\vert $, all subgroups of $S_{D}$ are closed by
Proposition~\ref{prop k>=n}, hence $\overline{L}^{\left(  k\right)  }=L$. On
the other hand, $k<\left\vert B\right\vert $ implies that $A_{B}$ is not
closed; in fact, we have $\overline{A_{B}}^{\left(  k\right)  }=S_{B}$.
Therefore $\overline{A_{B}\times L}^{\left(  k\right)  }=\overline{A_{B}%
}^{\left(  k\right)  }\times\overline{L}^{\left(  k\right)  }=S_{B}\times L$,
and also $\overline{S_{B}\times L}^{\left(  k\right)  }=S_{B}\times L$. It
remains to consider the case when $G$ is of the form~(\ref{eq subdirect}).
Then we have $A_{B}\times L_{0}\leq G\leq S_{B}\times L$, thus%
\begin{equation}
S_{B}\times L_{0}=\overline{A_{B}\times L_{0}}^{\left(  k\right)  }%
~{\leq_{\ast}}~\overline{G}^{\left(  k\right)  }\leq\overline{S_{B}\times
L}^{\left(  k\right)  }=S_{B}\times L. \label{eq closure of subdirect product}%
\end{equation}
Moreover, $\overline{G}^{\left(  k\right)  }$ contains $\left(  S_{B}\setminus
A_{B}\right)  \times\left(  L\setminus L_{0}\right)  $, and this shows that
the first containment in (\ref{eq closure of subdirect product}) (marked with
asterisk) is strict. However, $S_{B}\times L_{0}$ is of index $2$ in
$S_{B}\times L$, therefore we can conclude that $\overline{G}^{\left(
k\right)  }=S_{B}\times L$.
\end{proof}

\section{Proof of Theorem~\ref{thm main}\label{section proof of thm main}}

The proof of Theorem~\ref{thm main} is based on the same idea as that of
Proposition~\ref{prop k=n-1}:%
\renewcommand{\theenumi}{\arabic{enumi})}
\renewcommand{\labelenumi}{\theenumi}%

\begin{enumerate}
\item first we use (\ref{eq key idea}) with specific tuples $a$ to show that
$\overline{G}^{\left(  k\right)  }$ must be a \textquotedblleft
large\textquotedblright\ group
(see Subsection~\ref{section proof of thm main}(A) below), and then

\item we prove that $G$ is of \textquotedblleft small\textquotedblright\ index
in $\overline{G}^{\left(  k\right)  }$
(see Subsection~\ref{section proof of thm main}(B) below).
\end{enumerate}

For the first step, we will need to apply (\ref{eq key idea}) for several
groups acting on different sets, hence, for easier reference, we give a name
to this property.

\begin{definition}
Let $\Omega\subseteq\mathbf{n}$ be a nonempty set, and let us consider the
natural action of $S_{\Omega}$ on $\mathbf{k}^{\Omega}$ for a positive integer
$k\geq2$. We say that $H\leq S_{\Omega}$ is $k$\emph{-thick}, if%
\[
\forall a\in\mathbf{k}^{\Omega}:~\exists\gamma\in\left(  S_{\Omega}\right)
_{a}\setminus\left\{  {\operatorname*{id}}_{\Omega}\right\}  :~\gamma\in H.
\]

\end{definition}

We will use thickness with two types of tuples $a\in\mathbf{k}^{\Omega}$.
First, let $a$ contain only one repeated value, which is repeated exactly
$d+1$ times, say at the coordinates $i_{1},\ldots,i_{d+1}\in\Omega$ (note that
such a tuple exists only if $\left\vert \Omega\right\vert \geq d+1$). Then the
stabilizer of $a$ is the full symmetric group on $\left\{  i_{1}%
,\ldots,i_{d+1}\right\}  $, therefore $k$-thickness of $H$ implies that%
\begin{equation}
\exists\gamma\in S_{\left\{  i_{1},\ldots,i_{d+1}\right\}  }\setminus\left\{
\operatorname{id}\right\}  :~\gamma\in H. \label{eq o}%
\end{equation}
Next, let $d$ values be repeated in $a$, each of them repeated exactly two
times, say at the coordinates $i_{1},j_{1};~i_{2},j_{2};\ldots;i_{d},j_{d}$
(here we need $\left\vert \Omega\right\vert \geq2d$). Then the stabilizer of
$a$ is the group generated by the transpositions $\left(  i_{1}j_{1}\right)
,\left(  i_{2}j_{2}\right)  ,\ldots,\left(  i_{d}j_{d}\right)  $. Thus
$k$-thickness of $H$ implies that
\begin{equation}
\exists\gamma\in\langle\left(  i_{1}j_{1}\right)  ,\left(  i_{2}j_{2}\right)
,\ldots,\left(  i_{d}j_{d}\right)  \rangle\setminus\left\{  \operatorname{id}%
\right\}  :~\gamma\in H. \label{eq oo}%
\end{equation}

The first paragraph of the proof of Proposition~\ref{prop k=n-1} can be
reformulated as follows:

\begin{fact}
\label{fact closure of nonclosed is thick}If $G\leq S_{n}$ is not Galois
closed over $\mathbf{k}$, then $\overline{G}^{\left(  k\right)  }$ is $k$-thick.
\end{fact}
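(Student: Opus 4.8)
The plan is to observe that, once we specialize the definition of $k$-thickness to $\Omega=\mathbf{n}$ and $H=\overline{G}^{\left(k\right)}$, the conclusion we must prove is \emph{verbatim} the right-hand side of the implication~(\ref{eq key idea}). Since the hypothesis ``$G$ is not Galois closed over $\mathbf{k}$'' is exactly the condition $\overline{G}^{\left(k\right)}\neq G$ on the left-hand side of~(\ref{eq key idea}), the fact is simply a restatement of that implication in the new terminology. Accordingly, I would reproduce the short derivation that establishes~(\ref{eq key idea}) and then note the identification with thickness.

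Concretely, I would first fix an arbitrary $a\in\mathbf{k}^{n}$ together with some $\pi\in\overline{G}^{\left(k\right)}\setminus G$; such a $\pi$ exists because $G$ is not closed, so that $G\subsetneq\overline{G}^{\left(k\right)}$ (using that $G\subseteq\overline{G}^{\left(k\right)}$ always holds for a closure operator). By Proposition~\ref{prop kearnes} we have $\overline{G}^{\left(k\right)}\subseteq\left(S_{n}\right)_{a}\cdot G$, whence $\pi=\gamma\sigma$ for some $\gamma\in\left(S_{n}\right)_{a}$ and $\sigma\in G$. Then $\gamma=\pi\sigma^{-1}$ lies in $\overline{G}^{\left(k\right)}$, since $\overline{G}^{\left(k\right)}$ is a group containing both $\pi$ and $\sigma$, and $\gamma\neq\operatorname{id}$, for otherwise $\pi=\sigma\in G$ would contradict the choice of $\pi$. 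Thus for every tuple $a$ I have exhibited a non-identity element of $\left(S_{n}\right)_{a}\cap\overline{G}^{\left(k\right)}$, which is precisely what $k$-thickness of $\overline{G}^{\left(k\right)}$ demands.

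There is essentially no obstacle here: all the content is already packaged in Proposition~\ref{prop kearnes} and in the implication~(\ref{eq key idea}) derived in the proof of Proposition~\ref{prop k=n-1}. The only point worth flagging explicitly is that a \emph{single} witness $\pi$ serves for \emph{all} tuples $a$ simultaneously, because $\pi$ belongs to every coset $\left(S_{n}\right)_{a}\cdot G$; it is this uniformity that turns a statement about one $\pi$ into the quantified conclusion ``for all $a$'' required by the definition of thickness.
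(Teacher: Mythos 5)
Your proof is correct and is essentially identical to the paper's: the paper states Fact~\ref{fact closure of nonclosed is thick} precisely as a reformulation of the first paragraph of the proof of Proposition~\ref{prop k=n-1}, i.e., of the implication~(\ref{eq key idea}), which is the same derivation from Proposition~\ref{prop kearnes} that you reproduce. Your closing remark about a single $\pi$ witnessing all tuples $a$ simultaneously is also exactly how the paper's argument works.
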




\begin{center}

\vspace{1.5ex} \textbf{(A) The closures of non-closed groups }
\end{center}

The goal of this subsection is to prove the following description of the
closures of non-closed groups.

\begin{proposition}
\label{prop closure of nonclosed}Let $n>d^{2}+d$. If $G\leq S_{n}$ is not
Galois closed over $\mathbf{k}$, then $\overline{G}^{\left(  k\right)  }$ is
of the form $S_{B}\times L$, where $B\subseteq\mathbf{n}$ is such that
$D:=\mathbf{n}\setminus B$ has less than $d$ elements, and $L$ is a
permutation group on $D$.
\end{proposition}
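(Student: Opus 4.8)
The plan is to analyze the orbit structure of $H:=\overline{G}^{\left(k\right)}$, using that $H$ is $k$-thick by Fact~\ref{fact closure of nonclosed is thick}, and then to pin down its action on the orbits. I will work throughout with the two consequences of thickness recorded in (\ref{eq o}) and (\ref{eq oo}): every $(d+1)$-element subset of $\mathbf{n}$ supports a nontrivial element of $H$, and every system of $d$ disjoint transpositions admits a nontrivial sub-product lying in $H$.

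First I would bound the number of orbits. If $H$ had at least $d+1$ orbits on $\mathbf{n}$, I would choose one point from each of $d+1$ distinct orbits to form a $(d+1)$-set $T$; the nontrivial element of $H$ supported on $T$ supplied by (\ref{eq o}) preserves each orbit setwise, hence fixes every point of $T$ (each orbit meets $T$ in a single point) and every point outside $T$, a contradiction. Thus $H$ has at most $d$ orbits. Moreover, since $n>d^{2}+d\ge 2d$, at most one orbit can have more than $k=n-d$ elements, for two such orbits would already account for more than $2\left(n-d\right)\ge n$ points. The goal is therefore to exhibit a single orbit $B$ with $\left\vert B\right\vert >k$ on which $H$ acts as the full symmetric group $S_{B}$: once this is done, the complement $D:=\mathbf{n}\setminus B$ automatically satisfies $\left\vert D\right\vert =n-\left\vert B\right\vert <d$, and the remaining orbits, on which $H$ induces the little group $L:=\pi_{2}\left(H\right)$, all lie inside $D$.

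The heart of the argument, and the step I expect to be the main obstacle, is proving $S_{B}\le H$ for a suitable orbit $B$ with $\left\vert B\right\vert >k$. The plan is to manufacture transpositions inside $H$: starting from a nontrivial element of minimal support (of size at most $d+1$ by (\ref{eq o})) and forming commutators with further small-support elements furnished by (\ref{eq o}) and (\ref{eq oo}), one reduces the support step by step, the inequality $n>d^{2}+d$ guaranteeing that the orbits are large enough relative to the support bound $d+1$ for these reductions to strictly decrease the support rather than stall. Regarding the transpositions of $H$ as edges of a graph on $\mathbf{n}$, each connected component $C$ lies in a single orbit and satisfies $S_{C}\le H$, since transpositions forming a connected graph generate the full symmetric group; the combinatorics must be arranged so that the points left outside one component $B$ number fewer than $d$, forcing $B$ to be the unique orbit of size exceeding $k$ and giving $S_{B}\le H$. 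Should this procedure only yield $3$-cycles, hence $A_{B}\le H$, I would upgrade to $S_{B}$ using that $H$ is \emph{closed}: a group inducing merely $A_{B}$ on a $\left(>k\right)$-orbit would be of the shape described in Proposition~\ref{prop subdirect A_B and S_B} with $A_{B}$ in the role of $S_{B}$, and Proposition~\ref{prop closure of subdirect product} shows such groups are never Galois closed, contradicting $H=\overline{H}^{\left(k\right)}$.

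It then remains to assemble the direct product. With $S_{B}\le H$ (interpreted as $S_{B}\times\left\{\operatorname{id}_{D}\right\}\le H$) and $H\le S_{B}\times L$ where $L=\pi_{2}\left(H\right)$, every element of $L$ is realized by some $\beta\times\ell\in H$, and multiplying by the element $\beta^{-1}\times\operatorname{id}_{D}\in S_{B}\times\left\{\operatorname{id}_{D}\right\}\le H$ yields $\operatorname{id}_{B}\times\ell\in H$; combining these with $S_{B}\times\left\{\operatorname{id}_{D}\right\}$ gives $S_{B}\times\left\{\ell\right\}\subseteq H$ for every $\ell\in L$, so $H=S_{B}\times L$. (Equivalently, one may invoke Theorem~\ref{thm subdirect} as in the proof of Proposition~\ref{prop subdirect A_B and S_B}: the common quotient is a homomorphic image of $S_{B}$, and $\left\vert B\right\vert >k>\left\vert D\right\vert \ge\left\vert L\right\vert$ is too large for $L$ to share the quotient $A_{B}$ or $S_{B}$, forcing a trivial common quotient.) This produces $\overline{G}^{\left(k\right)}=S_{B}\times L$ with $\left\vert D\right\vert <d$, as claimed.
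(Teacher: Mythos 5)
Your proposal reproduces the paper's periphery but leaves its core unproven. The orbit-counting start (at most $d$ orbits, via (\ref{eq o})) and the finish (using that $H:=\overline{G}^{\left(k\right)}$ is itself Galois closed, together with Propositions~\ref{prop subdirect A_B and S_B} and~\ref{prop closure of subdirect product}, to promote anything between $A_{B}\times L$ and $S_{B}\times L$ to $S_{B}\times L$) are both correct and are essentially how the paper argues. The genuine gap is the step you yourself flag as the ``main obstacle'': that $H$ contains transpositions (or at least induces $A_{B}$) on an orbit $B$ whose complement has fewer than $d$ points. Your plan --- take a nontrivial element of minimal support and shrink its support by commutators until transpositions appear --- is not an argument: the support of a commutator is only contained in the union of the two supports, so nothing forces a strict decrease, and the classical Jordan-style support-reduction machinery that makes such arguments work requires primitivity, which you never establish or even discuss. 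This is exactly where the paper invests its real work: for a transitive $k$-thick group it rules out the primitive case (not containing the alternating group) with Bochert's theorem (Theorem~\ref{thm bochert}), since (\ref{eq o}) plants a nontrivial element of $H$ in the pointwise stabilizer of the Bochert set $I$; and it rules out the imprimitive case by showing any block system has at most $d$ blocks of size at most $d$, contradicting $\left\vert \Omega\right\vert >d^{2}$ (Lemma~\ref{lemma transitive H}). Your sketch contains no substitute for either argument. Moreover, the existence of the big orbit itself ($\left\vert \mathbf{n}\setminus B\right\vert <d$) is in your write-up an output of the same unproven construction, whereas the paper proves it cleanly from (\ref{eq oo}) (Lemma~\ref{lemma big orbit}).

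There is also a second, more localized problem in the intransitive case: $k$-thickness of $H$ gives a nontrivial stabilizer element $\gamma\in H$, but $\gamma$ may move points of $D$, so thickness does not directly supply small-support elements, let alone transpositions, lying inside $S_{B}\times\left\{ \operatorname{id}_{D}\right\} $; your transposition graph could a priori have no edges inside $B$ at all. The paper handles this by proving that the projection $\pi_{1}\left(H\right)\leq S_{B}$ inherits $k$-thickness, via the trick of extending $b\in\mathbf{k}^{B}$ by pairwise distinct entries on $D$ (possible because $\left\vert D\right\vert <k$), which forces the $D$-part of the stabilizer element to be the identity (Lemma~\ref{lemma intransitive case}); your proposal needs this step, or something equivalent, and does not have it. Finally, your parenthetical alternative ending is incorrect as stated: in Theorem~\ref{thm subdirect} a two-element common quotient $K$ is never excluded by comparing cardinalities of $S_{B}$ and $L$ --- it is excluded only by the closedness argument (Proposition~\ref{prop closure of subdirect product}), or, in your first version, by already knowing $S_{B}\times\left\{ \operatorname{id}_{D}\right\} \leq H$, which is precisely what the missing core step was supposed to deliver.
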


Throughout this subsection we will always assume that $G<\overline{G}^{\left(
k\right)  }\leq S_{n}$ with $n>d^{2}+d$, where $d=n-k\geq1$. We consider the
action of $\overline{G}^{\left(  k\right)  }$ on $\mathbf{n}$ (not on
$\mathbf{k}^{n}$), and we separate two cases upon the transitivity of this
action. First we deal with the transitive case, for which we will make use of
the following theorem of Bochert \cite{Bochert89} (see also \cite{DixMo,Wie}).

\begin{theorem}
[\cite{Bochert89}]\label{thm bochert}If $G$ is a primitive subgroup of
$S_{\Omega}$ not containing $A_{\Omega}$, then there exists a subset
$I\subseteq\Omega$ with $\left\vert I\right\vert \leq\frac{\left\vert
\Omega\right\vert }{2}$ such that the pointwise stabilizer of $I$ in $G$ is trivial.
\end{theorem}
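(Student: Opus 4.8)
The statement asserts that a primitive group $G\le S_{\Omega}$ not containing $A_{\Omega}$ has a \emph{base} of size at most $|\Omega|/2$, a base being a subset whose pointwise stabilizer in $G$ is trivial. My plan is to reduce the existence of a small base to a matching problem. Write $n=|\Omega|$ and let $I$ be a base of minimum cardinality. By minimality, for each $x\in I$ the set $I\setminus\{x\}$ is not a base, so there is a non-identity $g_{x}\in G$ fixing every point of $I\setminus\{x\}$. Such a $g_{x}$ cannot also fix $x$ (otherwise it would fix all of $I$, contradicting that $I$ is a base), so $x^{g_{x}}\neq x$; and since $g_{x}$ fixes the remaining points of $I$ and is a bijection, the image $x^{g_{x}}$ must lie in $\Omega\setminus I$. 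Thus each base point can be ``pushed out'' of $I$, and the admissible targets for $x$ form the set $A_{x}:=x^{H_{x}}\setminus\{x\}\subseteq\Omega\setminus I$, where $H_{x}$ is the pointwise stabilizer of $I\setminus\{x\}$.

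The crux is to choose these escape images injectively. If one can select distinct points $y_{x}\in A_{x}$ for the different $x\in I$, then $x\mapsto y_{x}$ is an injection $I\hookrightarrow\Omega\setminus I$, whence $|I|\le|\Omega\setminus I|=n-|I|$ and therefore $|I|\le n/2$, as desired. By Hall's theorem such a system of distinct representatives exists precisely when $\bigl|\bigcup_{x\in J}A_{x}\bigr|\ge|J|$ for every $J\subseteq I$. So the whole theorem is reduced to verifying this Hall condition for the family $(A_{x})_{x\in I}$.

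Verifying Hall's condition is where both hypotheses enter and where the real difficulty lies. That $A_{\Omega}\not\le G$ is essential is already visible in the extreme cases: for $G=S_{\Omega}$ a minimum base has size $n-1$ and every base point is forced to escape to the unique omitted point, so no matching can exist; likewise $A_{\Omega}$ has base size $n-2$. Primitivity is equally essential, since an intransitive group such as $S_{\Omega\setminus\{p\}}$ already violates the bound. The mechanism I would use to force the Hall inequality is the large \emph{minimal degree} of primitive groups not containing $A_{\Omega}$: every non-identity element moves many points, so the orbits $x^{H_{x}}$ are compelled to be large and to spread across $\Omega\setminus I$, making the unions $\bigcup_{x\in J}A_{x}$ big. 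Converting this heuristic into the exact inequality, with the sharp constant $\tfrac12$, is the main obstacle and is precisely the combinatorial content of Bochert's argument; rather than reprove the minimal-degree estimate from scratch, I would follow the classical treatment (cf.\ \cite{DixMo,Wie}) at this step.
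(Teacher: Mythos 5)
The first thing to note is that the paper contains no proof of this statement: it is Bochert's classical theorem, quoted from the literature (\cite{Bochert89}, see also \cite{DixMo,Wie}) and used as a black box in the proof of Lemma~\ref{lemma transitive H}. Your attempt must therefore stand on its own, and it does not: it has a genuine gap, which you yourself acknowledge. The preparatory part is correct --- a minimum base $I$ exists; for each $x\in I$ minimality gives a non-identity $g_{x}$ fixing $I\setminus\{x\}$ pointwise; $g_{x}$ must move $x$ since $I$ is a base; injectivity forces $x^{g_{x}}\in\Omega\setminus I$; and a system of distinct representatives for $(A_{x})_{x\in I}$ would yield $\left\vert I\right\vert \le n-\left\vert I\right\vert$. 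But the Hall condition is never verified: you state explicitly that this is ``the main obstacle'' and that you ``would follow the classical treatment'' at that step. Since this is the only place where the hypotheses (primitivity and $A_{\Omega}\not\leq G$) enter at all, the entire content of the theorem is assumed rather than proved.

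Moreover, the reduction to Hall's theorem is illusory, and the mechanism you propose for closing the gap cannot work. First, the single instance $J=I$ of the Hall condition asserts $\bigl\vert\bigcup_{x\in I}A_{x}\bigr\vert\ge\left\vert I\right\vert$, and since $\bigcup_{x\in I}A_{x}\subseteq\Omega\setminus I$, this instance alone already implies $\left\vert I\right\vert\le n-\left\vert I\right\vert$, i.e., the theorem itself; so ``verify Hall's condition'' is a statement at least as strong as what is to be proved, and the matching machinery adds nothing --- you have reformulated the problem, not reduced it. Second, the heuristic that every non-identity element of a primitive group not containing $A_{\Omega}$ moves ``many'' points fails at the scale needed here: for $S_{m}$ acting on the $n=\binom{m}{2}$ two-element subsets of an $m$-set (a primitive group not containing the alternating group), the image of a transposition moves only $2(m-2)=O(\sqrt{n})$ points, far fewer than $n/2$. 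Hence no minimal-degree estimate can force the sets $A_{x}$ to be large enough to deliver the constant $\tfrac{1}{2}$; in addition, classical lower bounds on minimal degree are themselves deduced from Bochert-type theorems, so invoking them risks circularity. A genuine proof (see \cite{DixMo,Wie}) works directly with the minimality of the base and a counting argument on the escape points $x^{g_{x}}$, not through Hall's theorem or minimal degree.
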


\begin{lemma}
\label{lemma transitive H}Let $\Omega\subseteq\mathbf{n}$ such that
$\left\vert \Omega\right\vert >\max\left(  2d,d^{2}\right)  $. If $H$ is a
transitive $k$-thick subgroup of $S_{\Omega}$, then $H=A_{\Omega}$ or
$H=S_{\Omega}$.
\end{lemma}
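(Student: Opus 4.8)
The plan is to show that a transitive $k$-thick group $H$ must contain $A_{\Omega}$, whence $H=A_{\Omega}$ or $H=S_{\Omega}$, since $A_{\Omega}$ has index $2$ in $S_{\Omega}$. The argument splits into two parts: first establish that $H$ is \emph{primitive}, and then invoke Bochert's theorem (Theorem~\ref{thm bochert}) to force $A_{\Omega}\leq H$. Both parts exploit that, because $\Omega\subseteq\mathbf{n}$ and $k=n-d\geq|\Omega|-d$, the tuples needed in (\ref{eq o}) and (\ref{eq oo}) can be realized over $\mathbf{k}$ for \emph{any} prescribed choice of the repeated coordinates. Thus $k$-thickness yields, for every $(d+1)$-subset $T\subseteq\Omega$, a non-identity element of $H$ supported inside $T$ (via (\ref{eq o})), and, for every family of $d$ disjoint pairs, a non-identity element of $H$ lying in the elementary abelian group these transpositions generate (via (\ref{eq oo})).

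To prove primitivity, I would suppose that $H$ preserves a nontrivial block system with $m$ blocks of size $b$, so $bm=|\Omega|>d^{2}$ and $b,m\geq2$. If $m\geq d+1$, pick $T$ consisting of one point from each of $d+1$ distinct blocks and apply (\ref{eq o}): the resulting non-identity $\gamma\in H$ supported in $T$ either fixes every block setwise, in which case it fixes $T$ pointwise and so $\gamma=\operatorname{id}$, or it maps some block $B$ to a different block, which forces $B\subseteq T$ and hence $b\leq1$ --- both impossible. If instead $m\leq d$, then $b=|\Omega|/m>d$, so $b\geq d+1$; choosing $d$ disjoint pairs $\left(i_{l}j_{l}\right)$ with each $i_{l}$ in one block $B_{1}$ and each $j_{l}$ in another block $B_{2}$ and applying (\ref{eq oo}) produces a non-identity $\gamma\in H$ that sends some point of $B_{1}$ into $B_{2}$, hence satisfies $\gamma(B_{1})=B_{2}$; but $\gamma$ displaces at most $d<b$ points, so some point of $B_{1}$ is fixed and yet must lie in $B_{2}$, a contradiction. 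Since $|\Omega|>d^{2}$ guarantees $b>d$ precisely when $m\leq d$, these two cases are exhaustive, and $H$ is primitive.

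Assuming now $H\neq A_{\Omega},S_{\Omega}$, we have $A_{\Omega}\not\leq H$ (as $A_{\Omega}$ has index $2$), and since $H$ is primitive, Theorem~\ref{thm bochert} provides a set $I\subseteq\Omega$ with $|I|\leq|\Omega|/2$ whose pointwise stabilizer $H_{(I)}$ in $H$ is trivial. I would then define a tuple $a\in\mathbf{k}^{\Omega}$ assigning a distinct private value to each point of $I$ and a single common value to all points of $\Omega\setminus I$; this uses at most $|I|+1\leq\lfloor|\Omega|/2\rfloor+1$ values, which is at most $k$ because $|\Omega|>2d$ and $k=n-d\geq|\Omega|-d$. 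The stabilizer $\left(S_{\Omega}\right)_{a}$ then fixes $I$ pointwise, so $\left(S_{\Omega}\right)_{a}\cap H\subseteq H_{(I)}=\left\{\operatorname{id}\right\}$, contradicting the $k$-thickness of $H$. Hence $A_{\Omega}\leq H$, as required.

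The main obstacle is the primitivity step, specifically the case of few, large blocks ($m\leq d$): here (\ref{eq o}) alone is too weak, and the crucial idea is to switch to (\ref{eq oo}) with the $d$ pairs straddling two blocks, turning a hypothetical block-preserving element into one forced to carry an entire block of size $b>d$ across while displacing at most $d$ points. The bound $|\Omega|>d^{2}$ is exactly what makes the two block cases complementary, while $|\Omega|>2d$ is what keeps the value budget within $\mathbf{k}$ both for the thickness applications and for the final Bochert tuple.
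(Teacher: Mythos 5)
Your proof is correct and takes essentially the same approach as the paper's: (\ref{eq o}) rules out block systems with more than $d$ blocks, (\ref{eq oo}) rules out blocks of size greater than $d$, the bound $\left\vert \Omega\right\vert >d^{2}$ makes these cases exhaustive, and Theorem~\ref{thm bochert} combined with $k$-thickness settles the primitive case --- the paper merely organizes all of this as a single proof by contradiction (assuming $A_{\Omega}\not\leq H$ from the start) and, in the Bochert step, reuses the tuple of (\ref{eq o}) with the $d+1$ repeated coordinates placed inside $\Omega\setminus I$ rather than your custom tuple with distinct values on $I$. One minor wording slip: your $\gamma$ obtained from (\ref{eq oo}) can displace up to $2d$ points, not $d$, but at most $d$ of them lie in $B_{1}$, which is all your argument actually needs.
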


\begin{proof}
Assume for contradiction that $H$ satisfies the assumptions of the lemma, but
$H$ does not contain $A_{\Omega}$. If $H$ is primitive, then let us consider
the set $I$ given in Theorem~\ref{thm bochert}. Since $\left\vert
\Omega\setminus I\right\vert \geq\frac{\left\vert \Omega\right\vert }{2}>d$,
we can find $d+1$ elements $i_{1},\ldots,i_{d+1}$ in $\Omega\setminus I$.
Since $H$ is $k$-thick and $\left\vert \Omega\right\vert \geq d+1$, we can
apply (\ref{eq o}) for $i_{1},\ldots,i_{d+1}$, and we obtain a permutation
$\gamma\neq\operatorname{id}$ in the pointwise stabilizer of $I$ in $H$, which
is a contradiction.

Thus $H$ cannot be primitive. Since it is transitive, there exists a
nontrivial partition
\begin{equation}
\Omega=B_{1}\dot{\cup}\cdots\dot{\cup}B_{r} \label{eq partition}%
\end{equation}
with $\left\vert B_{1}\right\vert =\cdots=\left\vert B_{r}\right\vert =s$ and
$r,s\geq2$ such that every element of $H$ preserves this partition. We will
prove by contradiction that $r\leq d$ and $s\leq d$. First let us assume that
$r>d$; let $B_{1}=\left\{  i_{1},j_{1},\ldots\right\}  ,\ldots,B_{d+1}%
=\left\{  i_{d+1},j_{d+1},\ldots\right\}  $, and let $\gamma$ be the
permutation provided by (\ref{eq o}). Since $\gamma\neq\operatorname{id}$,
there exist $p,q\in\left\{  1,\ldots,d+1\right\}  ,p\neq q$ such that
$\gamma\left(  i_{p}\right)  =i_{q}$. On the other hand, we have
$\gamma\left(  j_{p}\right)  =j_{p}$, and this means that $\gamma$ does not
preserve the partition (\ref{eq partition}). Next let us assume that $s>d$;
let $B_{1}=\left\{  i_{1},\ldots,i_{d+1},\ldots\right\}  ,~B_{2}=\left\{
j_{1},\ldots,j_{d+1},\ldots\right\}  $, and let $\gamma$ be the permutation
provided by (\ref{eq oo}). Since $\gamma\neq\operatorname{id}$, there exists
$p\in\left\{  1,\ldots,d\right\}  $ such that $\gamma\left(  i_{p}\right)
=j_{p}$. On the other hand, we have $\gamma\left(  i_{d+1}\right)  =i_{d+1}$,
and this means that $\gamma$ does not preserve the partition
(\ref{eq partition}). We can conclude that $r,s\leq d$, hence we have
$\left\vert \Omega\right\vert =rs\leq d^{2}<\left\vert \Omega\right\vert $, a contradiction.
\end{proof}

\begin{lemma}
\label{lemma transitive case}If $\overline{G}^{\left(  k\right)  }$ is
transitive, then $\overline{G}^{\left(  k\right)  }=S_{n}$.
\end{lemma}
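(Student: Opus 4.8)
The plan is to apply Lemma~\ref{lemma transitive H} directly to $H=\overline{G}^{\left(k\right)}$ with $\Omega=\mathbf{n}$, and then to rule out the alternating group by invoking the fact that a Galois closure is always Galois closed.

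First I would verify that the numerical hypothesis of Lemma~\ref{lemma transitive H} is satisfied for $\Omega=\mathbf{n}$. Since $d\geq1$, we have $d^{2}+d>d^{2}$ and $d^{2}+d\geq 2d$, so the standing assumption $n>d^{2}+d$ gives $n>\max\left(2d,d^{2}\right)$, i.e., $\left\vert\mathbf{n}\right\vert>\max\left(2d,d^{2}\right)$ as required. Next, the standing assumption of this subsection is $G<\overline{G}^{\left(k\right)}$, which means precisely that $G$ is not Galois closed over $\mathbf{k}$; hence Fact~\ref{fact closure of nonclosed is thick} shows that $\overline{G}^{\left(k\right)}$ is $k$-thick. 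Since $\overline{G}^{\left(k\right)}$ is transitive by hypothesis, Lemma~\ref{lemma transitive H} applies and yields $\overline{G}^{\left(k\right)}=A_{n}$ or $\overline{G}^{\left(k\right)}=S_{n}$.

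It then remains only to exclude the possibility $\overline{G}^{\left(k\right)}=A_{n}$. The key observation is that $\overline{G}^{\left(k\right)}=\left(G^{\vdash}\right)^{\vdash}$ is itself a Galois closed group, as the closure of any set is closed in any Galois connection (that is, $\overline{\overline{G}^{\left(k\right)}}^{\left(k\right)}=\overline{G}^{\left(k\right)}$). However, $d\geq1$ means $k<n$, so Proposition~\ref{prop k>=n} tells us that $A_{n}$ is \emph{not} Galois closed over $\mathbf{k}$. Consequently $\overline{G}^{\left(k\right)}\neq A_{n}$, and we conclude that $\overline{G}^{\left(k\right)}=S_{n}$.

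The argument is short and poses no real obstacle beyond assembling the available pieces. The only points requiring care are the elementary inequality guaranteeing $n>\max\left(2d,d^{2}\right)$ (immediate for $d\geq1$) and the observation that $A_{n}$ cannot arise as a closure, since every closure is Galois closed whereas $A_{n}$ fails to be closed once $k<n$.
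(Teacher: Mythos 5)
Your proof is correct and follows essentially the same route as the paper: the standing assumption $G<\overline{G}^{\left(k\right)}$ plus Fact~\ref{fact closure of nonclosed is thick} gives $k$-thickness, Lemma~\ref{lemma transitive H} (with the same inequality check $n>d^{2}+d\Rightarrow n>\max\left(2d,d^{2}\right)$) reduces to $A_{n}$ or $S_{n}$, and Proposition~\ref{prop k>=n} rules out $A_{n}$. Your only addition is making explicit the idempotence of the Galois closure (a closure is itself closed), which the paper leaves implicit.
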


\begin{proof}
Since $n>d^{2}+d$, we have $n>\max\left(  2d,d^{2}\right)  $. Thus from
Fact~\ref{fact closure of nonclosed is thick} and
Lemma~\ref{lemma transitive H} it follows that either $\overline{G}^{\left(
k\right)  }=A_{n}$ or $\overline{G}^{\left(  k\right)  }=S_{n}$. However,
$A_{n}$ is not Galois closed over $\mathbf{k}$ by Proposition~\ref{prop k>=n},
because $n>k$.
\end{proof}

Now let us consider the intransitive case. The first step is to prove that in
this case there is a unique \textquotedblleft big\textquotedblright\ orbit.

\begin{lemma}
\label{lemma big orbit}If $\overline{G}^{\left(  k\right)  }$ is not
transitive, then it has an orbit $B$ such that $D=\mathbf{n}\setminus B$ has
less than $d$ elements.
\end{lemma}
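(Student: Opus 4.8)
The plan is to set $H:=\overline{G}^{(k)}$, which is $k$-thick by Fact~\ref{fact closure of nonclosed is thick}, and to exploit the fact that, in the intransitive case, $H$ partitions $\mathbf{n}$ into at least two orbits $O_{1},\ldots,O_{t}$; I write $n_{i}=|O_{i}|$ with $n_{1}\ge\cdots\ge n_{t}$, let $B:=O_{1}$ be a largest orbit, and set $D:=\mathbf{n}\setminus B$. The assertion to be proved is precisely that $|D|=n-n_{1}<d$. I would argue by contradiction, assuming $n-n_{1}\ge d$, and extract a contradiction from the $k$-thickness of $H$ through~(\ref{eq oo}).

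The mechanism is the following. Every element of $H$ maps each of its orbits onto itself; hence no $\gamma\in H$ can move a point of one orbit into a different orbit. On the other hand, given any $d$ pairwise-disjoint pairs $\{i_{1},j_{1}\},\ldots,\{i_{d},j_{d}\}\subseteq\mathbf{n}$, formula~(\ref{eq oo}) furnishes a non-identity $\gamma\in\langle(i_{1}j_{1}),\ldots,(i_{d}j_{d})\rangle\cap H$. As these transpositions have disjoint supports and commute, $\gamma=\prod_{s\in S}(i_{s}j_{s})$ for some nonempty $S$, so that $\gamma(i_{s})=j_{s}$ for every $s\in S$. If we have arranged that each pair $\{i_{s},j_{s}\}$ is \emph{cross-orbit}, i.e.\ its two points lie in distinct orbits of $H$, then for $s\in S$ the element $\gamma$ sends $i_{s}$ out of its orbit, contradicting the previous sentence. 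Thus it suffices to produce $d$ pairwise-disjoint cross-orbit pairs.

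The heart of the proof is therefore the combinatorial claim that $n-n_{1}\ge d$, together with $n>2d$ (which holds since $n>d^{2}+d\ge 2d$ for $d\ge 1$, whence $\lfloor n/2\rfloor\ge d$), already guarantees $d$ disjoint cross-orbit pairs; equivalently, a matching of size $d$ in the complete multipartite graph with parts $O_{1},\ldots,O_{t}$. I would treat the two regimes separately. If $n_{1}\ge d$, I simply choose $d$ points of $D$ (possible because $|D|=n-n_{1}\ge d$) and pair each with a distinct point of $B$ (possible because $n_{1}\ge d$); every such pair is cross-orbit and the pairs are disjoint. If instead $n_{1}<d$, then every orbit has fewer than $d\le\lceil n/2\rceil$ elements; listing the points of $\mathbf{n}$ orbit by orbit as $p_{1},\ldots,p_{n}$ and pairing $p_{s}$ with $p_{s+\lceil n/2\rceil}$ for $s=1,\ldots,\lfloor n/2\rfloor$, no orbit occupies a block of $\lceil n/2\rceil$ consecutive positions, so the two endpoints of each pair fall in different orbits, giving $\lfloor n/2\rfloor\ge d$ disjoint cross-orbit pairs.

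Feeding any $d$ of these pairs into~(\ref{eq oo}) yields the forbidden orbit-mixing element of $H$, the desired contradiction; hence $|D|=n-n_{1}<d$ and $B=O_{1}$ is the claimed big orbit. I expect the matching step to be the main obstacle: the bound $n-n_{1}\ge d$ does not by itself supply the pairs when a single orbit dominates (one cannot match a part to itself), which is exactly why the case distinction on $n_{1}$ versus $d$ is needed — it mirrors the classical value $\min(\lfloor n/2\rfloor,\,n-n_{1})$ for the maximum matching of a complete multipartite graph — and why the hypothesis is used only through $n>2d$ to secure $\lfloor n/2\rfloor\ge d$.
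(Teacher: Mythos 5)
Your proof is correct, and it rests on the same engine as the paper's: $\overline{G}^{\left(k\right)}$ is $k$-thick by Fact~\ref{fact closure of nonclosed is thick}, and (\ref{eq oo}) applied to $d$ disjoint cross-orbit pairs yields a non-identity product of some of those transpositions inside $\overline{G}^{\left(k\right)}$, which would move a point out of its orbit --- impossible. But the combinatorial scaffolding around this engine is genuinely different. The paper argues in three steps: it first bounds the number of orbits by $d$ using (\ref{eq o}) (applied to representatives of $d+1$ distinct orbits), then uses the standing hypothesis $n>d^{2}+d$ so that the average orbit size $n/d$ exceeds $d$, producing an orbit $B$ with at least $d$ elements, and only then runs the cross-pairing between $B$ and its complement --- which is exactly your Case~1. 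You dispense with (\ref{eq o}) and the averaging altogether: you take $B$ to be a largest orbit, and in the case $n_{1}<d$ you manufacture $\lfloor n/2\rfloor\geq d$ disjoint cross-orbit pairs directly via the shift-by-$\lceil n/2\rceil$ construction, i.e., by exhibiting a matching of size $\min\left(\lfloor n/2\rfloor,\,n-n_{1}\right)$ in the complete multipartite graph on the orbits. Your route buys two things: it uses only one of the two thickness consequences, and it needs only $n>2d$ rather than $n>d^{2}+d$, so your version of the lemma is sharper (the stronger bound remains necessary elsewhere in the proof of Theorem~\ref{thm main}, e.g.\ in Lemma~\ref{lemma transitive H}, so the overall theorem is unaffected). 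What the paper's route buys in exchange is brevity and uniformity: (\ref{eq o}) is already set up and is reused in the transitive case, the ``at most $d$ orbits'' claim is a one-line consequence of it, and no explicit matching construction is needed.
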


\begin{proof}
We claim that $\overline{G}^{\left(  k\right)  }$ has at most $d$ orbits.
Suppose to the contrary, that there exists $d+1$ elements $i_{1}%
,\ldots,i_{d+1}\in\mathbf{n}$, each belonging to a different orbit. If
$\gamma\in\overline{G}^{\left(  k\right)  }$ is the permutation given by
(\ref{eq o}), then there exist $p,q\in\left\{  1,\ldots,d+1\right\}  $, $p\neq
q$ such that $\gamma\left(  i_{p}\right)  =i_{q}$, and this contradicts the
fact that $i_{p}$ and $i_{q}$ belong to different orbits of $\overline
{G}^{\left(  k\right)  }$. Now, the average orbit size is at least $\frac
{n}{d}>d$, therefore there exists an orbit $B=\left\{  i_{1},\ldots
,i_{d},\ldots\right\}  $ of size at least $d$. We will show that the
complement of $B$ has at most $d-1$ elements. Suppose this is not true, i.e.,
there are at least $d$ elements $j_{1},\ldots,j_{d}$ outside $B$. With the
help of (\ref{eq oo}) we obtain a permutation $\gamma\in\overline{G}^{\left(
k\right)  }$ for which there exists $p\in\left\{  1,\ldots,d\right\}  $ such
that $\gamma\left(  i_{p}\right)  =j_{p}$. This is clearly a contradiction,
since $i_{p}$ belongs to the orbit $B$, whereas $j_{p}$ belongs to some other orbit.
\end{proof}

At this point we know that $\overline{G}^{\left(  k\right)  }\leq S_{B}\times
S_{D}$. Using the the notation $G_{1}=\pi_{1}%
\bigl(%
\,\overline{G}^{\left(  k\right)  }%
\bigr)%
$ and $L=\pi_{2}%
\bigl(%
\,\overline{G}^{\left(  k\right)  }%
\bigr)%
$ for the projections of $\overline{G}^{\left(  k\right)  }$, we have
$\overline{G}^{\left(  k\right)  }\leq_{\operatorname{sd}}G_{1}\times L$.

\begin{lemma}
\label{lemma intransitive case}If $\overline{G}^{\left(  k\right)  }$ is not
transitive and $B$ is the big orbit given in Lemma~\ref{lemma big orbit}, then
$\overline{G}^{\left(  k\right)  }=S_{B}\times L$ for some $L\leq S_{D}$.
\end{lemma}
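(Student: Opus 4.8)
The plan is to analyze the two projections of $\overline{G}^{\left(k\right)}$. We already know that $\overline{G}^{\left(k\right)}\leq S_{B}\times S_{D}$ and that $\overline{G}^{\left(k\right)}\leq_{\operatorname{sd}}G_{1}\times L$, where $G_{1}=\pi_{1}(\overline{G}^{\left(k\right)})\leq S_{B}$ and $L=\pi_{2}(\overline{G}^{\left(k\right)})\leq S_{D}$; the group $L$ is exactly the one appearing in the statement. First I would record that we may assume $d\geq2$: by Lemma~\ref{lemma big orbit} we have $\left\vert D\right\vert<d$, and if $d=1$ this forces $D=\emptyset$, i.e.\ $B=\mathbf{n}$, making $\overline{G}^{\left(k\right)}$ transitive, contrary to hypothesis. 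The goal then reduces to showing that $G_{1}=S_{B}$ and that the subdirect product is in fact direct; the conclusion is read off from the structure theory already developed.

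The heart of the argument is to show that $G_{1}$ is a \emph{transitive} and \emph{$k$-thick} subgroup of $S_{B}$, so that Lemma~\ref{lemma transitive H} applies. Transitivity is immediate, since $B$ is an orbit of $\overline{G}^{\left(k\right)}\leq S_{B}\times S_{D}$ and projecting to the first coordinate preserves the action on $B$. The $k$-thickness is the delicate point and, I expect, the main obstacle. Given $a\in\mathbf{k}^{B}$, I would extend it to a tuple $\tilde{a}\in\mathbf{k}^{n}$ by assigning \emph{pairwise distinct} values to the coordinates in $D$; this is possible precisely because $\left\vert D\right\vert<d\leq k$ (and $d\leq k$ follows from $n>d^{2}+d\geq2d$). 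Applying $k$-thickness of $\overline{G}^{\left(k\right)}$ (Fact~\ref{fact closure of nonclosed is thick}) to $\tilde{a}$ yields a nonidentity element $\tilde{\gamma}=\beta\times\delta\in\overline{G}^{\left(k\right)}$ fixing $\tilde{a}$. The distinctness of the values on $D$ forces $\delta=\operatorname{id}_{D}$, hence $\beta\neq\operatorname{id}_{B}$, while $\beta=\pi_{1}(\tilde{\gamma})\in G_{1}$ fixes $a$. Thus $G_{1}$ contains a nonidentity element of $\left(S_{B}\right)_{a}$ for every $a$, i.e.\ $G_{1}$ is $k$-thick. Since $\left\vert B\right\vert=n-\left\vert D\right\vert\geq n-d+1>d^{2}+1>\max\left(2d,d^{2}\right)$ for $d\geq2$, Lemma~\ref{lemma transitive H} gives $G_{1}=A_{B}$ or $G_{1}=S_{B}$.

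To finish, I would exploit that $\overline{G}^{\left(k\right)}$ is Galois closed. If $G_{1}=A_{B}$, then Proposition~\ref{prop subdirect A_B and S_B} (applicable since $\left\vert B\right\vert>d^{2}+1>\max\left(\left\vert D\right\vert,4\right)$ for $d\geq2$) forces $\overline{G}^{\left(k\right)}=A_{B}\times L$. But then $A_{B}\times L$ is Galois closed, so by Corollary~\ref{cor GxH closed iff G and H closed} the group $A_{B}$ would be closed, contradicting Proposition~\ref{prop k>=n} since $k<\left\vert B\right\vert$. Hence $G_{1}=S_{B}$, and Proposition~\ref{prop subdirect A_B and S_B} leaves two possibilities: either $\overline{G}^{\left(k\right)}=S_{B}\times L$, which is what we want, or $\overline{G}^{\left(k\right)}$ has the form~(\ref{eq subdirect}). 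In the latter case Proposition~\ref{prop closure of subdirect product} (whose hypothesis $\left\vert D\right\vert<d\leq n-d$ holds) shows that the Galois closure of $\overline{G}^{\left(k\right)}$ equals the strictly larger group $S_{B}\times L$, contradicting the fact that $\overline{G}^{\left(k\right)}$ is closed. Therefore $\overline{G}^{\left(k\right)}=S_{B}\times L$, as claimed.
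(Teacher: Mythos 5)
Your proof is correct and follows essentially the same route as the paper's: establish that $G_{1}$ is transitive and $k$-thick (via the same tuple extension with pairwise distinct values on $D$), invoke Lemma~\ref{lemma transitive H} to get $G_{1}\geq A_{B}$, and then rule out $A_{B}\times L$ and the proper subdirect products using Propositions~\ref{prop subdirect A_B and S_B} and \ref{prop closure of subdirect product} together with the fact that $\overline{G}^{\left(k\right)}$ is itself Galois closed. The only differences are presentational: you dispose of the $d=1$ degeneracy upfront (the paper does this in a closing parenthetical remark) and you spell out the final contradiction argument that the paper leaves terse.
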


\begin{proof}
First we show that $G_{1}$ inherits $k$-thickness from $\overline{G}^{\left(
k\right)  }$. Let $b\in\mathbf{k}^{B}$, and extend $b$ to a tuple
$a\in\mathbf{k}^{n}$ such that the components $a_{i}\left(  i\in D\right)  $
are pairwise different (this is possible, since $\left\vert D\right\vert <k$).
The $k$-thickness of $\overline{G}^{\left(  k\right)  }$ implies that there
exists a permutation $\gamma\in\left(  S_{n}\right)  _{a}\cap\overline
{G}^{\left(  k\right)  }\setminus\left\{  \operatorname*{id}\right\}  $, and
from $\overline{G}^{\left(  k\right)  }\leq_{\operatorname{sd}}G_{1}\times L$
it follows that $\gamma=\beta\times\delta$ for some $\beta\in G_{1},\delta\in
L$. The construction of the tuple $a$ ensures that $\delta=\operatorname*{id}%
_{D}$, hence we have $\operatorname*{id}_{B}\neq\beta\in\left(  S_{B}\right)
_{b}\cap G_{1}$, and this proves that $G_{1}$ is a $k$-thick subgroup of
$S_{B}$.

Since $B$ is an orbit of $\overline{G}^{\left(  k\right)  }$, the action of
$G_{1}$ on $B$ is transitive. From $n>d^{2}+d$ it follows that $\left\vert
B\right\vert =n-\left\vert D\right\vert >n-d\geq\max\left(  2d,d^{2}\right)
$, hence Lemma~\ref{lemma transitive H} shows that $G_{1}\geq A_{B}$. This
means that either $\overline{G}^{\left(  k\right)  }\leq_{\operatorname{sd}%
}A_{B}\times L$ or $\overline{G}^{\left(  k\right)  }\leq_{\operatorname{sd}%
}S_{B}\times L$. Now with the help of
Proposition~\ref{prop subdirect A_B and S_B} and
Proposition~\ref{prop closure of subdirect product} we can conclude that
$\overline{G}^{\left(  k\right)  }=S_{B}\times L$. (Note that the assumption
$\left\vert B\right\vert >4$ in Proposition~\ref{prop subdirect A_B and S_B}
is not satisfied if $d=1$ and $n\leq4$. However, $d=1$ implies $D=\emptyset$,
what contradicts the intransitivity of $\overline{G}^{\left(  k\right)  }$.)
\end{proof}

Combining Lemmas~\ref{lemma transitive case} and \ref{lemma intransitive case}%
, we obtain Proposition~\ref{prop closure of nonclosed}, q.e.d.


\begin{center}
\vspace{1.5ex} \textbf{(B) The non-closed groups }
\end{center}

In this subsection we prove the following Proposition~\ref{prop nonclosed}. It
describes the groups $G$ with $\overline{G}^{\left(  k\right)  }=S_{B} \times
L$ and therefore completes also the proof of Theorem~\ref{thm main}.

\begin{proposition}
\label{prop nonclosed}Let $n>\max\left(  2^{d},d^{2}+d\right)  $, let
$B\subseteq\mathbf{n}$ and $D=\mathbf{n}\setminus B$ such that $\left\vert
D\right\vert <d$, and let $L\leq S_{D}$. If $G\leq S_{n}$ is a group whose
Galois closure over $\mathbf{k}$ is $S_{B}\times L$, then $G\leq
_{\operatorname{sd}}A_{B}\times L$ or $G\leq_{\operatorname{sd}}S_{B}\times L$.
\end{proposition}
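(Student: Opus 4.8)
The plan is to prove exactly two things about $G$: that its second projection $\pi_{2}\left(G\right)$ equals $L$, and that its first projection $\pi_{1}\left(G\right)$ contains $A_{B}$. Once both hold, the proposition follows: if $\pi_{1}\left(G\right)=S_{B}$ then $G\leq_{\operatorname{sd}}S_{B}\times L$, while if $\pi_{1}\left(G\right)=A_{B}$ (the only other subgroup of $S_{B}$ containing $A_{B}$, since $A_{B}$ is maximal) then $G\leq A_{B}\times L$ with both projections surjective, i.e.\ $G\leq_{\operatorname{sd}}A_{B}\times L$. Note first that $G\leq\overline{G}^{\left(k\right)}=S_{B}\times L$, so $G$ really is a subgroup of $S_{B}\times S_{D}$ and the projections make sense.

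The surjectivity $\pi_{2}\left(G\right)=L$ is the easy half, and I would obtain it by monotonicity. If $\pi_{2}\left(G\right)=L^{\prime}\leq L$, then $G\leq S_{B}\times L^{\prime}$, so Proposition~\ref{prop closure of direct product} gives $\overline{G}^{\left(k\right)}\leq\overline{S_{B}\times L^{\prime}}^{\left(k\right)}=\overline{S_{B}}^{\left(k\right)}\times\overline{L^{\prime}}^{\left(k\right)}=S_{B}\times L^{\prime}$, where $\overline{L^{\prime}}^{\left(k\right)}=L^{\prime}$ because $\left\vert D\right\vert <d\leq k$ forces every subgroup of $S_{D}$ to be closed by Proposition~\ref{prop k>=n}. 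Since $\overline{G}^{\left(k\right)}=S_{B}\times L$, this forces $L^{\prime}=L$.

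The heart of the matter is the first projection, and this is where the hypothesis $n>2^{d}$ is spent. The key is an index bound extracted from Proposition~\ref{prop kearnes}: if $H\leq S_{\Omega}$ is orbit equivalent to $S_{\Omega}$ on $\mathbf{k}^{\Omega}$, equivalently $\overline{H}^{\left(k\right)}=S_{\Omega}$, then applying Proposition~\ref{prop kearnes} inside $S_{\Omega}$ yields $S_{\Omega}=\left(S_{\Omega}\right)_{b}\cdot H$ for every $b\in\mathbf{k}^{\Omega}$, hence $\left[S_{\Omega}:H\right]\leq\left\vert \left(S_{\Omega}\right)_{b}\right\vert$. Choosing $b$ with the smallest stabilizer — a tuple using all $k$ values in which exactly $\left\vert \Omega\right\vert -k$ of them occur as a doubleton and the rest as singletons — gives $\left\vert \left(S_{\Omega}\right)_{b}\right\vert =2^{\left\vert \Omega\right\vert -k}$, so $\left[S_{\Omega}:H\right]\leq 2^{\left\vert \Omega\right\vert -k}$; if this number is smaller than $\left\vert \Omega\right\vert$, then the standard fact that a subgroup of $S_{m}$ of index less than $m$ must contain $A_{m}$ (for $m\geq 5$; the small cases force index at most $2$) yields $A_{\Omega}\leq H$. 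To feed $G$ into this, I would pass to $N:=\left\{\beta\in S_{B}\mid\beta\times\operatorname{id}_{D}\in G\right\}$ (so $N=G$ when $D=\emptyset$) and show $\overline{N}^{\left(k\right)}=S_{B}$. Given $c\in\mathbf{k}^{B}$, extend it to $a\in\mathbf{k}^{n}$ whose entries on $D$ are pairwise distinct (possible as $\left\vert D\right\vert <k$). By (\ref{eq k-orbit equivalence}) the orbits $a^{G}$ and $a^{S_{B}\times L}$ coincide; restricting to the tuples in the orbit whose $D$-part equals $a_{D}$ — which by distinctness are exactly the images of elements fixing $D$ pointwise — identifies $c^{N}$ with $c^{S_{B}}$, and since $c$ is arbitrary, $N$ is orbit equivalent to $S_{B}$, i.e.\ $\overline{N}^{\left(k\right)}=S_{B}$. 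Now $N\leq S_{B}$ has gap $d_{B}:=\left\vert B\right\vert -k=d-\left\vert D\right\vert$, and the arithmetic ($n>\max\left(2^{d},d^{2}+d\right)$ and $\left\vert D\right\vert <d$) gives $\left\vert B\right\vert=n-\left\vert D\right\vert>2^{\,d-\left\vert D\right\vert}=2^{d_{B}}$ and $\left\vert B\right\vert\geq 5$; the index bound then delivers $A_{B}\leq N\leq\pi_{1}\left(G\right)$, finishing the proof.

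The obstacle I expect is precisely why the index bound cannot be applied naively to $G$ itself: on $G\leq S_{n}$ it only gives $\left[S_{n}:G\right]\leq 2^{d}$, and when $D$ is large this can exceed $\left\vert B\right\vert$ (indeed approach $n$), so the minimal-index fact fails to apply. The device that rescues the argument is descending to $N$ on the big orbit $B$, whose smaller gap $d_{B}=d-\left\vert D\right\vert$ makes $2^{d_{B}}$ comfortably below $\left\vert B\right\vert$; the one genuinely fiddly point is checking the inequality $n-\left\vert D\right\vert>2^{\,d-\left\vert D\right\vert}$ uniformly for all $\left\vert D\right\vert\in\left\{0,\dots,d-1\right\}$ (for $\left\vert D\right\vert=0$ this is the hypothesis $n>2^{d}$, and for $\left\vert D\right\vert\geq 1$ it follows from $2^{d}-\left\vert D\right\vert>2^{\,d-\left\vert D\right\vert}$). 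Conceptually, the crux is establishing that $N$ is orbit equivalent to $S_{B}$ — the localization of the global orbit equivalence to the big orbit achieved by pinning distinct values on $D$ — since this is what couples the control of $\pi_{1}\left(G\right)$ to the already-established surjectivity of $\pi_{2}\left(G\right)$.
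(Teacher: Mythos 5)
Your proposal is correct, and its engine is the same as the paper's: Kearnes' formula (Proposition~\ref{prop kearnes}) applied inside $S_{B}$ with a tuple whose stabilizer is generated by $d_{B}=\left\vert B\right\vert -k$ disjoint transpositions (so has order $2^{d_{B}}$), followed by the minimal-index theorem (Proposition~\ref{thm sz-cz-sz}), together with the closedness of all subgroups of $S_{D}$ (Proposition~\ref{prop k>=n}) to settle the $D$-side. Where you genuinely diverge is in how you produce a subgroup of $S_{B}$ whose Galois closure is $S_{B}$. The paper projects: writing $G_{1}=\pi_{1}\left(  G\right)  $ and $G_{2}=\pi_{2}\left(  G\right)  $, it invokes Proposition~\ref{prop closure of direct product} to get $S_{B}\times L=\overline{G}^{\left(  k\right)  }\leq\overline{G_{1}}^{\left(  k\right)  }\times\overline{G_{2}}^{\left(  k\right)  }$, which yields $\overline{G_{1}}^{\left(  k\right)  }=S_{B}$ and $G_{2}=L$ in one stroke (Lemma~\ref{lemma closure is S_B x Delta}), and then feeds $G_{1}$ into the index argument (Lemma~\ref{lemma closure is S_n}). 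You instead localize: you pass to the pointwise $D$-stabilizer part $N=\left\{  \beta\in S_{B}\mid\beta\times\operatorname{id}_{D}\in G\right\}  $ and prove $\overline{N}^{\left(  k\right)  }=S_{B}$ by pinning pairwise distinct values on $D$ and comparing orbits; this pinning trick does appear in the paper, but in part (A) of the proof (Lemma~\ref{lemma intransitive case}, to transfer $k$-thickness from $\overline{G}^{\left(  k\right)  }$ to its first projection), not in the proof of this proposition. Your route costs the extra orbit-pinning argument but buys a slightly stronger conclusion, namely $A_{B}\times\left\{  \operatorname{id}_{D}\right\}  \subseteq G$ rather than merely $A_{B}\leq\pi_{1}\left(  G\right)  $, which is all the statement needs; the paper's projection route is shorter because $\overline{G_{1}}^{\left(  k\right)  }=S_{B}$ falls out of the product formula with no work. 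Your arithmetic $n-\left\vert D\right\vert >2^{d-\left\vert D\right\vert }$ for all $\left\vert D\right\vert \leq d-1$ checks out, as does your treatment of the small cases $\left\vert B\right\vert \leq4$ (which force $d=1$ and index at most $2$). The only imprecision is in your motivational remark about the naive approach: Proposition~\ref{prop kearnes} bounds the index of $G$ in its closure $\overline{G}^{\left(  k\right)  }$, not in $S_{n}$; but since this remark plays no role in the actual argument, it does not affect correctness.
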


Throughout this subsection we will assume that $n>\max\left(  2^{d}%
,d^{2}+d\right)  $, where $d=n-k\geq1$, and $\overline{G}^{\left(  k\right)
}=S_{B}\times L$, where $B$ and $L$ are as in the proposition above. Let
$G_{1}=\pi_{1}\left(  G\right)  \leq S_{B}$ and $G_{2}=\pi_{2}\left(
G\right)  \leq S_{D}$; then we have $G\leq_{\operatorname{sd}}G_{1}\times
G_{2}$. As in
Subsection~\ref{section proof of thm main}(A), we begin with the transitive
case (i.e., $D=\emptyset$), and we will use the following well-known result
(see, e.g., \cite[Exercise 14.3]{Wie}).


\begin{proposition}
\label{thm sz-cz-sz}If $n>4$ and $H$ is a proper subgroup of $S_{n}$ different
from $A_{n}$, then the index of $H$ is at least $n$.
\end{proposition}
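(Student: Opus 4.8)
The plan is to use the coset action of $S_{n}$ on $H$ together with the simplicity of $A_{n}$, which is exactly where the hypothesis $n>4$ enters. Write $m:=\left[S_{n}:H\right]$ for the index we wish to bound from below. I would let $S_{n}$ act on the set $S_{n}/H$ of left cosets of $H$ by left multiplication; this yields a homomorphism $\varphi\colon S_{n}\to\operatorname{Sym}\left(S_{n}/H\right)\cong S_{m}$. The kernel of $\varphi$ is the \emph{core} $N:=\bigcap_{g\in S_{n}}gHg^{-1}$, i.e.\ the largest normal subgroup of $S_{n}$ contained in $H$. The whole argument reduces to identifying this core.

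Since $n>4$, i.e.\ $n\geq5$, the alternating group $A_{n}$ is simple, and consequently the only normal subgroups of $S_{n}$ are $\left\{\operatorname{id}\right\}$, $A_{n}$ and $S_{n}$. Because $N\leq H$ and $H$ is a proper subgroup, we have $N\neq S_{n}$. If $N=A_{n}$, then $A_{n}\leq H<S_{n}$ would force $H=A_{n}$ (as $A_{n}$ has index $2$ in $S_{n}$), contradicting the hypothesis $H\neq A_{n}$. Hence $N=\left\{\operatorname{id}\right\}$, so $\varphi$ is injective and embeds $S_{n}$ into $S_{m}$.

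Comparing orders then gives $n!=\left\vert S_{n}\right\vert\leq\left\vert S_{m}\right\vert=m!$, and since the factorial is strictly increasing on positive integers this yields $m\geq n$, that is, $\left[S_{n}:H\right]\geq n$, as claimed. The only nontrivial input is the classification of the normal subgroups of $S_{n}$ (equivalently, the simplicity of $A_{n}$) for $n\geq5$; this is precisely where $n>4$ is used, and it is the main obstacle in the sense that the inequality genuinely fails for small $n$ (e.g.\ the Klein four-group sits inside $S_{4}$ with index $6>4$, but the Sylow subgroup structure of $S_{4}$ permits proper subgroups of index $3<4$). Everything else is the standard coset-action argument and requires no calculation.
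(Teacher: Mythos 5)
Your proof is correct. Note that the paper does not prove this proposition at all: it is quoted as a well-known result with a reference to Wielandt's book (Exercise 14.3 of \cite{Wie}), so there is no in-paper argument to compare against; your coset-action argument --- trivial core via the classification of normal subgroups of $S_{n}$ for $n\geq 5$, then the embedding $S_{n}\hookrightarrow S_{m}$ forcing $n!\leq m!$ and hence $m\geq n$ --- is exactly the standard proof of this fact, and every step (including the observation that $N=A_{n}$ would force $H=A_{n}$, and the counterexample $D_{4}<S_{4}$ of index $3$ showing the hypothesis $n>4$ is necessary) checks out.
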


\begin{lemma}
\label{lemma closure is S_n}If $\overline{G}^{\left(  k\right)  }=S_{n}$, then
$G=A_{n}$ or $G=S_{n}$.
\end{lemma}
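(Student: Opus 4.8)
The plan is to bound the index $\left[  S_{n}:G\right]  $ from above and then invoke the classical index bound in Proposition~\ref{thm sz-cz-sz}. Since we are assuming $\overline{G}^{\left(  k\right)  }=S_{n}$, Proposition~\ref{prop kearnes} gives $S_{n}=\left(  S_{n}\right)  _{a}\cdot G$ for \emph{every} tuple $a\in\mathbf{k}^{n}$. Applying the product formula $\left\vert \left(  S_{n}\right)  _{a}\cdot G\right\vert =\left\vert \left(  S_{n}\right)  _{a}\right\vert \left\vert G\right\vert /\left\vert \left(  S_{n}\right)  _{a}\cap G\right\vert $ to the two subgroups $\left(  S_{n}\right)  _{a}$ and $G$, this identity is equivalent to
\[
\left[  S_{n}:G\right]  =\left[  \left(  S_{n}\right)  _{a}:\left(  S_{n}\right)  _{a}\cap G\right]  ,
\]
so $\left[  S_{n}:G\right]  $ divides $\left\vert \left(  S_{n}\right)  _{a}\right\vert $ for each $a$. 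Hence it suffices to exhibit a single tuple $a$ whose stabilizer is small.

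The key step is to choose $a$ so that $\left\vert \left(  S_{n}\right)  _{a}\right\vert =2^{d}$, and this is exactly where the relation $k=n-d$ is used. Since $n>2^{d}\geq 2d$, I can select $d$ disjoint pairs of coordinates $\{i_{1},j_{1}\},\dots,\{i_{d},j_{d}\}$, assign the value $\ell$ to the $\ell$-th pair, and assign the remaining $k-d$ values bijectively to the remaining $n-2d$ coordinates; there are exactly enough values, because $d+\left(  n-2d\right)  =n-d=k$. This is precisely the type of tuple used to derive (\ref{eq oo}): its stabilizer is $\langle\left(  i_{1}j_{1}\right)  ,\dots,\left(  i_{d}j_{d}\right)  \rangle$, an elementary abelian group of order $2^{d}$. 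Consequently $\left[  S_{n}:G\right]  $ divides $2^{d}$, and in particular $\left[  S_{n}:G\right]  \leq 2^{d}<n$.

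To finish, I apply Proposition~\ref{thm sz-cz-sz}: for $n>4$, any subgroup of $S_{n}$ of index strictly less than $n$ must be $S_{n}$ or $A_{n}$, which yields the claim. The only delicate point, which I expect to be the main (though minor) obstacle, is the small-degree case. The hypothesis $n>2^{d}$ guarantees $n>4$ as soon as $d\geq 2$, so Proposition~\ref{thm sz-cz-sz} applies directly there. For $d=1$ we may instead have $n\in\{3,4\}$; in that case, however, $\left[  S_{n}:G\right]  $ divides $2^{1}=2$, so either $G=S_{n}$ or $G$ is an index-$2$ subgroup of $S_{n}$, and the unique such subgroup is $A_{n}$ (alternatively one simply quotes Proposition~\ref{prop k=n-1}). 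Either way $G\in\{A_{n},S_{n}\}$, completing the proof.
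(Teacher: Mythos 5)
Your proof is correct and follows essentially the same route as the paper: it uses Proposition~\ref{prop kearnes} with the tuple underlying (\ref{eq oo}), whose stabilizer $\langle\left(  i_{1}j_{1}\right)  ,\dots,\left(  i_{d}j_{d}\right)  \rangle$ has order $2^{d}$, deduces $\left[  S_{n}:G\right]  \leq2^{d}<n$, and finishes with Proposition~\ref{thm sz-cz-sz}, treating $n\leq4$ (forced to have $d=1$) separately. Your refinement that the index actually \emph{divides} $2^{d}$, which lets you settle the small case directly via the unique index-$2$ subgroup rather than citing Proposition~\ref{prop k=n-1}, is a harmless (and slightly tidier) variant of the paper's argument.
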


\begin{proof}
Let $a\in\mathbf{k}^{n}$ be the tuple which was used to obtain (\ref{eq oo});
then we have $\left(  S_{n}\right)  _{a}=\langle\left(  i_{1}j_{1}\right)
,\left(  i_{2}j_{2}\right)  ,\ldots,\left(  i_{d}j_{d}\right)  \rangle$. From
Proposition~\ref{prop kearnes} we obtain%
\[
S_{n}=\overline{G}^{\left(  k\right)  }\subseteq\left(  S_{n}\right)
_{a}\cdot G,
\]
hence we have $\left(  S_{n}\right)  _{a}\cdot G=S_{n}$. Since $\left\vert
\left(  S_{n}\right)  _{a}\right\vert =2^{d}$, the index of $G$ in $S_{n}$ is
at most $2^{d}<n$, and therefore Proposition~\ref{thm sz-cz-sz} implies that
$G\geq A_{n}$ if $n>4$. If $n\leq4$, then $d=1$, thus we can apply
Proposition~\ref{prop k=n-1}.
\end{proof}

\begin{lemma}
\label{lemma closure is S_B x Delta}If $\overline{G}^{\left(  k\right)
}=S_{n}\times L$, then $G_{1}\geq A_{n}$ and $G_{2}= L$.
\end{lemma}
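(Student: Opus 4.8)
The plan is to analyse the two projections $G_{1}=\pi _{1}(G)\le S_{B}$ and $G_{2}=\pi _{2}(G)\le S_{D}$ separately. From $G\le \overline{G}^{\left( k\right) }=S_{B}\times L$ one reads off immediately that $G_{1}\le S_{B}$ and $G_{2}\le L$, so the whole task reduces to proving the reverse inclusions $G_{2}\ge L$ and $G_{1}\ge A_{B}$.

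The inclusion $G_{2}=L$ is the easy half. Since $k=n-d>d^{2}\ge \left\vert D\right\vert $, I can choose a tuple $a\in \mathbf{k}^{n}$ whose entries on $D$ are pairwise distinct and use $\left\vert D\right\vert $ colours not occurring on $B$. Then every $D$-coordinate forms a singleton fibre of $a$ and no fibre meets both $B$ and $D$, so $\left( S_{n}\right) _{a}\le S_{B}\times \{\mathrm{id}_{D}\}$, i.e.\ $\pi _{2}\bigl( \left( S_{n}\right) _{a}\bigr) =\{\mathrm{id}_{D}\}$. By Proposition~\ref{prop kearnes} we have $S_{B}\times L=\overline{G}^{\left( k\right) }\subseteq \left( S_{n}\right) _{a}\cdot G$, and applying the homomorphism $\pi _{2}$ to this inclusion gives $L=\pi _{2}(S_{B}\times L)\subseteq \pi _{2}\bigl( \left( S_{n}\right) _{a}\bigr) \,\pi _{2}(G)=G_{2}$. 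Hence $G_{2}=L$.

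For $G_{1}\ge A_{B}$ the obvious attempt---bounding $[S_{B}:G_{1}]$ by the order $2^{d}$ of the stabiliser of a tuple with $d$ repeated pairs---is the main obstacle, because $2^{d}$ need not be smaller than $\left\vert B\right\vert $ once $d$ is large, so Proposition~\ref{thm sz-cz-sz} cannot be invoked. The plan to overcome this is to first lower the effective gap. As $G\le G_{1}\times G_{2}\le G_{1}\times L$, monotonicity of the closure together with Proposition~\ref{prop closure of direct product} gives $S_{B}\times L=\overline{G}^{\left( k\right) }\le \overline{G_{1}\times L}^{\left( k\right) }=\overline{G_{1}}^{\left( k\right) }\times L$, where $\overline{L}^{\left( k\right) }=L$ by Proposition~\ref{prop k>=n} since $k>\left\vert D\right\vert $. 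Comparing first components forces $\overline{G_{1}}^{\left( k\right) }=S_{B}$.

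Now $G_{1}\le S_{B}$ has full closure over $\mathbf{k}$, but the gap of this smaller instance is only $d_{1}:=\left\vert B\right\vert -k=d-\left\vert D\right\vert \ge 1$. I would then repeat the argument of Lemma~\ref{lemma closure is S_n} with $B$ in place of $\mathbf{n}$: a tuple $b\in \mathbf{k}^{B}$ with $d_{1}$ repeated pairs has $\left\vert \left( S_{B}\right) _{b}\right\vert =2^{d_{1}}$, and $S_{B}=\overline{G_{1}}^{\left( k\right) }\subseteq \left( S_{B}\right) _{b}\cdot G_{1}$ shows $[S_{B}:G_{1}]\le 2^{d_{1}}$. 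The crucial gain is that for $\left\vert D\right\vert \ge 1$ one has $2^{d_{1}}=2^{d-\left\vert D\right\vert }\le 2^{d-1}<n/2<\left\vert B\right\vert $ (using $n>2^{d}$ and $n>2d\ge 2\left\vert D\right\vert $); together with the easily checked inequalities $\left\vert B\right\vert >4$ and $\left\vert B\right\vert >\max (2^{d_{1}},d_{1}^{2}+d_{1})$, Proposition~\ref{thm sz-cz-sz} now yields $G_{1}\in \{A_{B},S_{B}\}$, i.e.\ $G_{1}\ge A_{B}$. (For $D=\emptyset $ this is precisely Lemma~\ref{lemma closure is S_n}.) Finally, $G_{1}\ge A_{B}$, $G_{2}=L$ and $G\le _{\operatorname{sd}}G_{1}\times G_{2}$ give $G\le _{\operatorname{sd}}A_{B}\times L$ or $G\le _{\operatorname{sd}}S_{B}\times L$, which is what Proposition~\ref{prop nonclosed} needs.
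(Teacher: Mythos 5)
Your proof is correct, and its core---the inclusion $G_{1}\geq A_{B}$, which is the hard half of the lemma---follows the paper's route exactly: from $G\leq G_{1}\times G_{2}$ (resp.\ $G\leq G_{1}\times L$) and Proposition~\ref{prop closure of direct product} you force $\overline{G_{1}}^{\left(k\right)}=S_{B}$, and then you run the transitive-case index argument on $B$ with the reduced gap $d_{1}=\left\vert B\right\vert -k=d-\left\vert D\right\vert$. The paper does this by citing Lemma~\ref{lemma closure is S_n} with $B$ in place of $\mathbf{n}$, merely remarking that the hypothesis $n>\max\left(2^{d},d^{2}+d\right)$ survives the substitution; you instead inline that lemma's proof and explicitly verify the inequalities $\left\vert B\right\vert>4$ and $2^{d_{1}}<\left\vert B\right\vert$ needed for Proposition~\ref{thm sz-cz-sz}, which is a welcome extra degree of care. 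Where you genuinely diverge is the half $G_{2}=L$: the paper observes that $k>\left\vert D\right\vert$ makes $G_{2}$ Galois closed (Proposition~\ref{prop k>=n}), so $G\leq S_{B}\times L\leq\overline{G_{1}}^{\left(k\right)}\times G_{2}$, and applying $\pi_{2}$ gives $G_{2}\leq L\leq G_{2}$; you instead invoke Proposition~\ref{prop kearnes} with a tuple $a$ whose $D$-entries are pairwise distinct colours not used on $B$ (such $a$ exists since $k>\left\vert D\right\vert$), so that $\left(S_{n}\right)_{a}\leq S_{B}\times\left\{\operatorname{id}_{D}\right\}$, and project the inclusion $S_{B}\times L\subseteq\left(S_{n}\right)_{a}\cdot G$ onto the second coordinate to get $L\subseteq G_{2}$. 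Both arguments are valid: the paper's version gets $L\leq G_{2}$ for free from machinery already in place and is shorter, while yours is more self-contained (it needs only Kearnes' formula) and makes transparent the geometric reason why nothing in the closure can move $D$-points when $D$ carries fresh, distinct values. The one step in your version that deserves the justification you implicitly rely on is applying $\pi_{2}$ to the product set $\left(S_{n}\right)_{a}\cdot G$; this is legitimate precisely because both factors lie in the group $S_{B}\times S_{D}$, on which $\pi_{2}$ is a homomorphism---and your choice of $a$ guarantees this.
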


\begin{proof}
Clearly, $G\leq G_{1}\times G_{2}$ implies $S_{B}\times L=\overline
{G}^{\left(  k\right)  }\leq\overline{G_{1}\times G_{2}}^{\left(  k\right)
}=\overline{G_{1}}^{\left(  k\right)  }\times\overline{G_{2}}^{\left(
k\right)  }$ by Proposition~\ref{prop closure of direct product}. It follows
that $\overline{G}_{1}^{\left(  k\right)  }=S_{B}$, and thus
Lemma~\ref{lemma closure is S_n} yields $G_{1}\geq A_{n}$. (One can verify
that the inequality $n>\max\left(  2^{d},d^{2}+d\right)  $ holds if we write
$\left\vert B\right\vert $ in place of $n$ and $\left\vert B\right\vert -k$ in
place of $d$.) On the other hand, $k>\left\vert D\right\vert $ implies that
$\overline{G_{2}}^{\left(  k\right)  }=G_{2}$ by Proposition~\ref{prop k>=n},
hence%
\[
G\leq S_{B}\times L=\overline{G}^{\left(  k\right)  }\leq\overline{G_{1}%
}^{\left(  k\right)  }\times\overline{G_{2}}^{\left(  k\right)  }%
=\overline{G_{1}}^{\left(  k\right)  }\times G_{2}.
\]
Applying $\pi_{2}$ to these inequalities, we obtain $G_{2}\leq L\leq G_{2}$,
and this proves $G_{2}=L$.
\end{proof}

Since $G\leq_{\operatorname{sd}}G_{1}\times G_{2}$,
Lemma~\ref{lemma closure is S_B x Delta} immediately implies
Proposition~\ref{prop nonclosed}, q.e.d.

\section{Computational results\label{section computations}}

The Galois closures of a group $G\leq S_{n}$ over $\mathbf{k}$ for
$k=2,3,\ldots$ form a nonincreasing sequence, eventually stabilizing at $G$
itself:%
\begin{equation}
\overline{G}^{\left(  2\right)  }\geq\overline{G}^{\left(  3\right)  }%
\geq\cdots\geq\overline{G}^{\left(  n-1\right)  }\geq\overline{G}^{\left(
n\right)  }=\overline{G}^{\left(  n+1\right)  }=\cdots=G.
\label{eq chain of closures}%
\end{equation}
We computed the Galois closures of all subgroups of $S_{n}$ for $2\leq k\leq
n\leq6$ by computer, and we found that for most of these groups the chain of
closures contains only $G$ (i.e., $G$ is Galois closed over $\mathbf{2}$), and
for all other groups (\ref{eq chain of closures}) consists only of two
different groups (namely $\overline{G}^{\left(  2\right)  }$ and $G$).
Table~\ref{table n<=6} shows the list of groups corresponding to the latter
case, up to conjugacy. For each group, the first column gives the smallest $n$
for which $G$ can be embedded into $S_{n}$ (here we mean an embedding as a
permutation group, not as an abstract group; cf.
Remark~\ref{remark fixed point}). We also give the largest $k$ such that
$\overline{G}^{\left(  k\right)  }\neq G$, i.e., (\ref{eq chain of closures})
takes the form $\overline{G}^{\left(  2\right)  }=\ldots=\overline{G}^{\left(
k\right)  }>\overline{G}^{\left(  k+1\right)  }=\ldots=G$.%

\begin{table}[tbp] \centering
\begin{tabular}
[c]{lll}\hline
\multicolumn{1}{|l}{} & \multicolumn{1}{|l}{$G\leq S_{n}$} &
\multicolumn{1}{|l|}{$\overline{G}^{\left(  k\right)  }$}\\\hline\hline
\multicolumn{1}{|l}{$n=3,\,k=2\,$} & \multicolumn{1}{|l}{$A_{3}$} &
\multicolumn{1}{|l|}{$S_{3}$}\\\hline
\multicolumn{1}{|l}{$n=4,\,k=3$} & \multicolumn{1}{|l}{$A_{4}$} &
\multicolumn{1}{|l|}{$S_{4}$}\\\hline
\multicolumn{1}{|l}{$n=4,\,k=2$} & \multicolumn{1}{|l}{$C_{4}$} &
\multicolumn{1}{|l|}{$D_{4}$}\\\hline
\multicolumn{1}{|l}{$n=5,\,k=4$} & \multicolumn{1}{|l}{$A_{5}$} &
\multicolumn{1}{|l|}{$S_{5}$}\\\hline
\multicolumn{1}{|l}{$n=5,\,k=2$} & \multicolumn{1}{|l}{$\operatorname*{AGL}%
\left(  1,5\right)  $} & \multicolumn{1}{|l|}{$S_{5}$}\\\hline
\multicolumn{1}{|l}{$n=5,\,k=2$} & \multicolumn{1}{|l}{$S_{3}\times
_{\operatorname{sd}}S_{2}$} & \multicolumn{1}{|l|}{$S_{3}\times S_{2}$%
}\\\hline
\multicolumn{1}{|l}{$n=5,\,k=2$} & \multicolumn{1}{|l}{$A_{3}\times S_{2}$} &
\multicolumn{1}{|l|}{$S_{3}\times S_{2}$}\\\hline
\multicolumn{1}{|l}{$n=5,\,k=2$} & \multicolumn{1}{|l}{$C_{5}$} &
\multicolumn{1}{|l|}{$D_{5}$}\\\hline
\multicolumn{1}{|l}{$n=6,\,k=5$} & \multicolumn{1}{|l}{$A_{6}$} &
\multicolumn{1}{|l|}{$S_{6}$}\\\hline
\multicolumn{1}{|l}{$n=6,\,k=2$} & \multicolumn{1}{|l}{$\operatorname*{PGL}%
\left(  2,5\right)  $} & \multicolumn{1}{|l|}{$S_{6}$}\\\hline
\multicolumn{1}{|l}{$n=6,\,k=3$} & \multicolumn{1}{|l}{$S_{4}\times
_{\operatorname{sd}}S_{2}$} & \multicolumn{1}{|l|}{$S_{4}\times S_{2}$%
}\\\hline
\multicolumn{1}{|l}{$n=6,\,k=3$} & \multicolumn{1}{|l}{$A_{4}\times S_{2}$} &
\multicolumn{1}{|l|}{$S_{4}\times S_{2}$}\\\hline
\multicolumn{1}{|l}{$n=6,\,k=2$} & \multicolumn{1}{|l}{$S_{3}\times
_{\operatorname{sd}}S_{3}$} & \multicolumn{1}{|l|}{$S_{3}\times S_{3}$%
}\\\hline
\multicolumn{1}{|l}{$n=6,\,k=2$} & \multicolumn{1}{|l}{$A_{3}\times S_{3}$} &
\multicolumn{1}{|l|}{$S_{3}\times S_{3}$}\\\hline
\multicolumn{1}{|l}{$n=6,\,k=2$} & \multicolumn{1}{|l}{$D_{4}\times
_{\operatorname{sd}}S_{2}$} & \multicolumn{1}{|l|}{$D_{4}\times S_{2}$%
}\\\hline
\multicolumn{1}{|l}{$n=6,\,k=2$} & \multicolumn{1}{|l}{$C_{4}\times S_{2}$} &
\multicolumn{1}{|l|}{$D_{4}\times S_{2}$}\\\hline
\multicolumn{1}{|l}{$n=6,\,k=3$} & \multicolumn{1}{|l}{$\left(  S_{3}\wr
S_{2}\right)  \cap A_{6}$} & \multicolumn{1}{|l|}{$S_{3}\wr S_{2}$}\\\hline
\multicolumn{1}{|l}{$n=6,\,k=2$} & \multicolumn{1}{|l}{$S_{3}\wr
_{\operatorname{sd}}S_{2}$} & \multicolumn{1}{|l|}{$S_{3}\wr S_{2}$}\\\hline
\multicolumn{1}{|l}{$n=6,\,k=2$} & \multicolumn{1}{|l}{$R\left(
\mbox{\mancube}\right)  $} & \multicolumn{1}{|l|}{$S\left(
\mbox{\mancube}\right)  $}\\\hline
&  &
\end{tabular}
\caption{Nontrivial closures for $n \le 6$}\label{table n<=6}%
\end{table}%

Some of the entries in Table~\ref{table n<=6} may need some explanation. Using
the notation of Theorem~\ref{thm subdirect}, each subdirect product in the
table corresponds to a two-element quotient group $K$: for symmetric groups
$S_{n}$ we take the homomorphism $\varphi\colon S_{n}\rightarrow K$ with
kernel $A_{n}$ (cf. Proposition~\ref{prop subdirect A_B and S_B}), whereas for
the dihedral group $D_{4}$ we take the homomorphism $\varphi\colon
D_{4}\rightarrow K$ whose kernel is the group of rotations in $D_{4}$. The
group $S_{3}\wr S_{2}$ is the wreath product of $S_{3}$ and $S_{2}$ (with the
imprimitive action); equivalently, it is the semidirect product $\left(
S_{3}\times S_{3}\right)  \rtimes S_{2}$ (with $S_{2}$ acting on the direct
product by permuting the two components). By $S_{3}\wr_{\operatorname{sd}%
}S_{2}$ we mean the \textquotedblleft subdirect wreath
product\textquotedblright\ $\left(  S_{3}\times_{\operatorname{sd}}%
S_{3}\right)  \rtimes S_{2}$. Finally, the groups $S\left(
\mbox{\mancube}\right)  $ and $R\left(  \mbox{\mancube}\right)  $ denote the
group of all symmetries and the group of all rotations (orientation-preserving
symmetries) of the cube, acting on the six faces of the cube.

Combining these computational results with Theorem~\ref{thm main}, we get the
solution of Problem~\ref{problem main} for the case $d=2$.

\begin{proposition}
\label{prop k=n-2}For $k=n-2\geq2$, each subgroup of $S_{n}$ except $A_{n}$
and $A_{n-1}$ (for $n\geq4$) and $C_{4}$ (for $n=4$) is Galois closed over
$\mathbf{k}$.
\end{proposition}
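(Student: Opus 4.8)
The plan is to split the argument according to the size of $n$, using Theorem~\ref{thm main} where it applies and the computational data of Table~\ref{table n<=6} for the remaining small cases. Since $d=n-k=2$, the hypothesis $n>\max\left(2^{d},d^{2}+d\right)=\max\left(4,6\right)=6$ of Theorem~\ref{thm main} holds precisely when $n\geq7$, so I would treat $n\geq7$ and $n\in\left\{4,5,6\right\}$ separately.

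For $n\geq7$ I would simply specialize Theorem~\ref{thm main} to $d=2$. The set $D=\mathbf{n}\setminus B$ is required to satisfy $\left\vert D\right\vert <d=2$, i.e.\ $\left\vert D\right\vert \in\left\{0,1\right\}$; in either case $S_{D}$ is trivial, so the ``little group'' $L\leq S_{D}$ is forced to be trivial. Consequently the candidate non-closed groups $A_{B}\times L$ and $S_{B}\times L$ collapse to $A_{B}$ and $S_{B}$, respectively, and a proper subdirect product $G<_{\operatorname{sd}}S_{B}\times L$ is impossible once $L$ is trivial (surjectivity of the first projection would force $G=S_{B}$). Hence the only non-closed groups are those of the form $A_{B}$: for $\left\vert D\right\vert =0$ this is $A_{n}$, and for $\left\vert D\right\vert =1$ it is the copy of $A_{n-1}$ fixing a single point, identified with $A_{n-1}$ via Remark~\ref{remark fixed point}. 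Thus for $n\geq7$ the exceptions are exactly $A_{n}$ and $A_{n-1}$, and in particular no group such as $C_{4}$ can occur.

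For $n\in\left\{4,5,6\right\}$ I would read the exceptions off Table~\ref{table n<=6}. The key point is that a subgroup $H$ listed at its minimal embedding degree $m_{H}$, with largest index $k_{H}$ satisfying $\overline{H}^{\left(k_{H}\right)}\neq H$, stays non-closed over $\mathbf{k}$ after embedding into a larger $S_{n}$ by fixing points exactly when $k\leq k_{H}$: by Remark~\ref{remark fixed point} the closure chain of the embedded group is obtained from that of $H$ by adjoining fixed points, so $k_{H}$ is unchanged. Applying this with $k=n-2$, for $n=6$ the entries with $k_{H}\geq4$ are only $A_{6}$ $\left(k_{H}=5\right)$ and $A_{5}$ $\left(k_{H}=4\right)$, giving $A_{6}=A_{n}$ and $A_{5}=A_{n-1}$; for $n=5$ the entries with $k_{H}\geq3$ and $m_{H}\leq5$ are only $A_{5}$ $\left(k_{H}=4\right)$ and $A_{4}$ $\left(k_{H}=3\right)$, giving $A_{5}=A_{n}$ and $A_{4}=A_{n-1}$; and for $n=4$ the entries with $k_{H}\geq2$ and $m_{H}\leq4$ are $A_{4}$ $\left(k_{H}=3\right)$, $A_{3}$ $\left(k_{H}=2\right)$ and $C_{4}$ $\left(k_{H}=2\right)$, giving $A_{4}=A_{n}$, $A_{3}=A_{n-1}$ and the extra exception $C_{4}$. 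In each case this matches the asserted list, and the $C_{4}$ exception is genuinely confined to $n=4$ because its closure chain already stabilizes at $k=3$.

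The individual verifications are routine; the only delicate point, and where I expect to spend the most care, is the bookkeeping for the small cases. Specifically, I must make sure that every subgroup of $S_{n}$ (for $n\leq6$) that is non-closed over $\mathbf{n-2}$ is accounted for, including the embedded alternating groups $A_{n-1}$ arising from lower-degree rows of Table~\ref{table n<=6} via Remark~\ref{remark fixed point}, and confirm that no subdirect-product or other sporadic entry has $k_{H}\geq n-2$ for $n=5,6$.
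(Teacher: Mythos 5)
Your proposal is correct and follows essentially the same route as the paper's own proof: apply Theorem~\ref{thm main} with $d=2$ for $n\geq7$ (where $\left\vert D\right\vert<2$ forces $L$ to be trivial, leaving only $A_{n}$ and $A_{n-1}$), and read the cases $n\in\left\{4,5,6\right\}$ off Table~\ref{table n<=6}. Your additional care with the subdirect-product collapse and the embedding bookkeeping via Remark~\ref{remark fixed point} only makes explicit what the paper leaves implicit.
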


\begin{proof}
If $n>6$, then we can apply Theorem~\ref{thm main}, and we obtain the
exceptional groups $A_{n}$ and $A_{n-1}$ from the direct product $A_{B}\times
L$ with $\left\vert D\right\vert =0$ and $\left\vert D\right\vert =1$,
respectively. If $n\leq6$, then the non-closed groups can be read from
Table~\ref{table n<=6}.
\end{proof}

We have also examined the linear groups appearing in Theorem~\ref{thm seress}
by computer, and we have found that all of them are Galois closed over
$\mathbf{3}$. Thus we have the following result for primitive groups.

\begin{proposition}
\label{prop primitive 3-closed}Every primitive permutation group except for
$A_{n}\left(  n\geq4\right)  $ is Galois closed over $\mathbf{3}$.
\end{proposition}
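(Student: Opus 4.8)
The plan is to reduce everything to the primitive groups that fail to be Galois closed over $\mathbf{2}$, letting the chain of closures do the bulk of the work. From (\ref{eq chain of closures}) we have $\overline{G}^{(3)}\leq\overline{G}^{(2)}$ for every $G\leq S_{n}$; hence if a primitive group $G$ is Galois closed over $\mathbf{2}$, then $G\leq\overline{G}^{(3)}\leq\overline{G}^{(2)}=G$, so $\overline{G}^{(3)}=G$ and $G$ is Galois closed over $\mathbf{3}$ as well. Consequently only the primitive groups that are \emph{not} closed over $\mathbf{2}$ need to be examined.

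Next I would invoke Seress's theorem to reduce to finitely many candidates. By Theorem~\ref{thm seress}, for $n\geq 11$ the only primitive group that is not closed over $\mathbf{2}$ is $A_{n}$, while for $n\leq 10$ the non-closed primitive groups are exactly those appearing in the blocks of Theorem~\ref{thm seress}, omitting the largest group of each block (which is the common Galois closure over $\mathbf{2}$). I would then split this list into the alternating groups and the remaining groups.

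For the alternating groups the answer is immediate from Proposition~\ref{prop k>=n}: if $n\geq 4$ then $k=3<n$, so $A_{n}$ is not Galois closed over $\mathbf{3}$, and these are precisely the exceptions asserted in the statement; on the other hand $A_{3}$ falls under the regime $k=n=3$, where Proposition~\ref{prop k>=n} guarantees that every subgroup of $S_{3}$, and in particular $A_{3}$, is Galois closed over $\mathbf{3}$. For every remaining (non-alternating) group in the list --- the linear groups, together with the small cyclic and dihedral examples, appearing in Theorem~\ref{thm seress} --- I would simply appeal to the computational verification recorded immediately before the proposition, which asserts that each of them is Galois closed over $\mathbf{3}$.

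The substantive point, and the main obstacle, lies exactly in this last family. Unlike for the alternating groups, there is no uniform structural reason forcing these particular primitive groups to become closed as we pass from $k=2$ to $k=3$; what must hold is that the orbits on $\mathbf{3}^{\,n}$ already separate each such $G$ from every strictly larger group, whereas the orbits on $\mathbf{2}^{\,n}$ did not. By (\ref{eq k-orbit equivalence}) this is equivalent to $G=\overline{G}^{(3)}$, i.e.\ to $G$ being the greatest group with its prescribed orbits on $\mathbf{3}^{\,n}$, and this is the one place where a case-by-case computer check appears unavoidable.
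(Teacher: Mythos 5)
Your proposal is correct and follows essentially the same route as the paper: the paper also justifies this proposition by combining the chain of closures (\ref{eq chain of closures}) with Theorem~\ref{thm seress} to reduce to the non-closed-over-$\mathbf{2}$ primitive groups, disposing of $A_{n}$ via Proposition~\ref{prop k>=n}, and citing the computer verification that the remaining (linear) groups in Seress's list are Galois closed over $\mathbf{3}$. Your closing observation that the case-by-case computation is the irreducible core of the argument is exactly the situation in the paper, which offers no structural proof for those groups either.
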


\section{Concluding remarks and open problems\label{section concluding}}

We have introduced a Galois connection to study invariance groups of
$n$-variable functions defined on a $k$-element domain, and we have studied
the corresponding closure operator. Our main result is that if the difference
$d=n-k$ is relatively small compared to $n$, then \textquotedblleft most
groups\textquotedblright\ are Galois closed, and we have explicitly described
the non-closed groups. The bound $\max\left(  2^{d},d^{2}+d\right)  $ of
Theorem~\ref{thm main} is probably not the best possible; it remains an open
problem to improve it.

\begin{problem}
Determine the smallest number $f\left(  d\right)  $ such that
Theorem~\ref{thm main} is valid for all $n\geq f\left(  d\right)  $.
\end{problem}

For fixed $d$, the inequality $n>\max\left(  2^{d},d^{2}+d\right)  $ fails
only for \textquotedblleft small\textquotedblright\ values of $n$, so one
might hope that these cases can be dealt with easily. However, our
investigations indicate that there is a simple pattern in the closures if $n$
is much larger than $d$, and exactly those exceptional\ groups corresponding
to small values of $n$ are the ones that make the problem difficult. (We can
say that the Boolean case is the hardest, as in this case $n$ is just $d+2$.)
We have fully settled only the cases $d\leq2$; perhaps it is feasible to
attack the problem for the next few values of $d$.

\begin{problem}
Describe the (non-)closed groups for $d=3,4,\ldots$.
\end{problem}

The chain of closures (\ref{eq chain of closures}) for the groups that we
investigated in our computer experiments has length at most two: for all
$k\geq2$, we have either $\overline{G}^{\left(  k\right)  }=\overline
{G}^{\left(  2\right)  }$ or $\overline{G}^{\left(  k\right)  }=G$. This is
certainly not true in general; for example, we have
\[
\overline{A_{3}\times\cdots\times A_{t}}^{\left(  k\right)  }=A_{3}%
\times\cdots\times A_{k}\times S_{k+1}\times\cdots\times S_{t},
\]
hence $\overline{G}^{\left(  2\right)  }>\overline{G}^{\left(  3\right)
}>\cdots>\overline{G}^{\left(  t-1\right)  }>\overline{G}^{\left(  t\right)
}=G$ holds for $G=A_{3}\times\cdots\times A_{t}$. It is natural to ask if
there exist groups with long chains of closures that are not direct products
of groups acting on smaller sets. As Proposition~\ref{prop primitive 3-closed}
shows, we cannot find such groups among primitive groups.

\begin{problem}
\label{problem transitive long chain of closures}Find transitive groups with
arbitrarily long chains of closures.
\end{problem}

The closure operator defined in Section~\ref{section preliminaries}(A)
concerns the Galois closure with respect to the Galois connection induced by
the relation $\vdash\,\subseteq S_{n}\times O_{k}^{(n)}$, based on a natural
action of $S_{n}$ on $\mathbf{k}^{n}$ (see Section~\ref{section introduction}%
). In permutation group theory also another closure operator, called
$k$\emph{-closure} is used, which was introduced by H.~Wielandt
(\cite[Definition~5.3]{Wi69}). This notion describes the Galois closures with
respect to the Galois connection induced by the relation $\triangleright
\subseteq S_{n}\times{\mathfrak{P}}(\mathbf{n}^{k})$. Here $\sigma\in S_{n}$
acts on $r=(r_{1},\dots,r_{k})\in\mathbf{n}^{k}$ according to $r^{\sigma
}:=(r_{1}\sigma,\dots,r_{k}\sigma)$, and, for a $k$-ary relation
$\varrho\subseteq\mathbf{n}^{k}$, we have $\sigma\triangleright\varrho$ if and
only if $\sigma$ preserves $\varrho$, i.e., $r^{\sigma}\in\varrho$ for all
$r\in\varrho$. For $G\subseteq S_{n}$, the $k$-closure $(G^{\triangleright
})^{\triangleright}$ is denoted by $\operatorname*{Aut}\operatorname{Inv}%
^{(k)}G$ (\cite{PoeK79}), or by $G^{(k)}=\operatorname{gp}(\mbox{$k$-rel }G)$
(\cite{Wi69}). A group $G\leq S_{n}$ is $k$-closed if and only if it can be
defined by $k$-ary relations, i.e., if there exists a set $R$ of $k$-ary
relations on $\mathbf{n}$ such that $G$ consists of the permutations that
preserve every member of $R$. The following proposition establishes a
connection between the two notions of closure.

\begin{proposition}
For every $G\leq S_{n}$ and $k\geq1$, the Galois closure $\overline
{G}^{\left(  k+1\right)  }$ is contained in the $k$-closure of $G$. In
particular, every $k$-closed group is Galois closed over $\mathbf{k+1}$.
\end{proposition}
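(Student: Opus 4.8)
The plan is to rewrite both closures as intersections of cosets of $G$ indexed by point stabilizers, and then match each stabilizer that occurs on the $k$-closure side with the stabilizer of a carefully chosen $\left(k+1\right)$-coloring. First I would record the orbit description of the $k$-closure. Since the $k$-ary relations invariant under $G$ are exactly the unions of orbits of $G$ acting diagonally on $\mathbf{n}^{k}$, a permutation belongs to the $k$-closure iff it preserves each such orbit, and evaluating $r^{\sigma}\in\varrho$ at $\varrho=r^{G}$ (which contains $r$) shows this is equivalent to $r^{\sigma}\in r^{G}$ for every $r\in\mathbf{n}^{k}$. Thus
\[
\operatorname{Aut}\operatorname{Inv}^{(k)}G=\{\sigma\in S_{n}\mid\forall r\in\mathbf{n}^{k}:~r^{\sigma}\in r^{G}\},
\]
which is the exact analogue of (\ref{eq closure via orbits}) with $\mathbf{k}^{n}$ replaced by $\mathbf{n}^{k}$. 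Running the computation in the proof of Proposition~\ref{prop kearnes} verbatim, with the $n$-tuple $a$ replaced by the $k$-tuple $r$ (whose stabilizer $\left(S_{n}\right)_{r}$ is the pointwise stabilizer $\left(S_{n}\right)_{(S)}$ of the set $S:=\{r_{1},\dots,r_{k}\}$), then yields
\[
\operatorname{Aut}\operatorname{Inv}^{(k)}G=\bigcap_{r\in\mathbf{n}^{k}}\left(S_{n}\right)_{r}\cdot G=\bigcap_{\substack{S\subseteq\mathbf{n}\\ \left\vert S\right\vert \leq k}}\left(S_{n}\right)_{(S)}\cdot G,
\]
while Proposition~\ref{prop kearnes} gives, on the Galois side, $\overline{G}^{\left(k+1\right)}=\bigcap_{a\in\left(\mathbf{k+1}\right)^{n}}\left(S_{n}\right)_{a}\cdot G$.

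The key step is then purely combinatorial. Given $S\subseteq\mathbf{n}$ with $\left\vert S\right\vert \leq k$, I would build a tuple $a\in\left(\mathbf{k+1}\right)^{n}$ that assigns a distinct color from $\{1,\dots,\left\vert S\right\vert\}$ to each element of $S$ and the single color $k+1$ to every element of $\mathbf{n}\setminus S$; this fits into $k+1$ colors precisely because $\left\vert S\right\vert +1\leq k+1$, and it is exactly this accounting that forces the shift from $k$-ary relations to a $\left(k+1\right)$-element domain. Recalling that $\left(S_{n}\right)_{a}$ is the direct product of the symmetric groups on the color classes, and that here the classes are the singletons $\{s\}$ ($s\in S$) together with $\mathbf{n}\setminus B$, I get the literal coincidence $\left(S_{n}\right)_{a}=\left(S_{n}\right)_{(S)}$, whence $\left(S_{n}\right)_{a}\cdot G=\left(S_{n}\right)_{(S)}\cdot G$.

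Finally I would assemble the inclusion: for each $S$ with $\left\vert S\right\vert \leq k$ the intersection defining $\overline{G}^{\left(k+1\right)}$ contains the term $\left(S_{n}\right)_{a}\cdot G=\left(S_{n}\right)_{(S)}\cdot G$, so $\overline{G}^{\left(k+1\right)}\subseteq\left(S_{n}\right)_{(S)}\cdot G$; intersecting over all such $S$ gives $\overline{G}^{\left(k+1\right)}\subseteq\operatorname{Aut}\operatorname{Inv}^{(k)}G$. The \textquotedblleft in particular\textquotedblright\ assertion then follows from extensivity of the closure operator: if $G$ is $k$-closed, then $G\subseteq\overline{G}^{\left(k+1\right)}\subseteq\operatorname{Aut}\operatorname{Inv}^{(k)}G=G$, forcing equality. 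I expect the only genuine subtlety to be the very first step, namely justifying the orbit description of the $k$-closure and, more importantly, deriving its coset formula by the \emph{same} manipulation as Proposition~\ref{prop kearnes} so that the two stabilizer formulas use matching coset orders; once both closures are written in the common form $\bigcap(\text{stabilizer})\cdot G$, the coloring construction makes the containment immediate.
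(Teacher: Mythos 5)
Your proof is correct, and it takes a genuinely different (though closely related) route from the paper's. The paper works directly with the orbit descriptions: it defines a map $\varkappa$ sending a repetition-free $k$-tuple $r\in\mathbf{n}^{k}$ to exactly the coloring you build from the set $S=\{r_{1},\dots,r_{k}\}$, proves that $\varkappa$ is injective and equivariant in the twisted sense $\varkappa\left(  r\right)  ^{\sigma^{-1}}=\varkappa\left(  r^{\sigma}\right)  $, and then chases orbits: $\varkappa\left(  r^{\pi}\right)  =\varkappa\left(  r\right)  ^{\pi^{-1}}\in\varkappa\left(  r\right)  ^{G}=\varkappa\left(  r^{G}\right)  $, so injectivity yields $r^{\pi}\in r^{G}$. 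You instead convert \emph{both} closures into Kearnes-type intersections of coset sets and match terms; this makes the containment a transparent term-by-term comparison and handles tuples with repeated entries automatically (the stabilizer only sees the underlying set $S$ with $\left\vert S\right\vert \leq k$), where the paper must say ``remove the repetitions and work with a smaller $k$''. The one point your phrasing glosses over --- and which the paper's $\sigma^{-1}$ makes explicit --- is that the two actions have opposite handedness: $S_{n}$ permutes \emph{positions} of tuples in $\left(  \mathbf{k+1}\right)  ^{n}$ but \emph{values} of tuples in $\mathbf{n}^{k}$, so under any fixed composition convention one action satisfies $\left(  x^{\sigma}\right)  ^{\tau}=x^{\sigma\tau}$ and the other $\left(  x^{\sigma}\right)  ^{\tau}=x^{\tau\sigma}$. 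Consequently, running the computation of Proposition~\ref{prop kearnes} on the $k$-closure side does not go through ``verbatim'': it produces $\bigcap_{r}G\cdot\left(  S_{n}\right)  _{r}$ rather than $\bigcap_{r}\left(  S_{n}\right)  _{r}\cdot G$. This is harmless --- the $k$-closure is a group, hence closed under inversion, and the two intersections are elementwise inverses of each other, so they coincide --- but it deserves a sentence, since your final term-matching step relies on the coset orders agreeing on the two sides. (Also a typo: ``$\mathbf{n}\setminus B$'' in your stabilizer computation should read $\mathbf{n}\setminus S$.) What your approach buys is modularity and symmetry between the two Galois connections; what the paper's buys is that the inverse-equivariance of $\varkappa$ records precisely where the opposite handedness of the two actions enters, rather than leaving it implicit.
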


\begin{proof}
The proof is based on a suitable correspondence between $\mathbf{n}^{k}$ and
$\left(  \mathbf{k+1}\right)  ^{n}$. Let $r=(r_{1},\dots,r_{k})\in
\mathbf{n}^{k}$ be a $k$-tuple whose components are pairwise different. We
define $\varkappa\left(  r\right)  =\left(  a_{1},\ldots,a_{n}\right)
\in\left(  \mathbf{k+1}\right)  ^{n}$ as follows:%
\[
a_{i}=\left\{  \!\!%
\begin{array}
[c]{rl}%
\ell, & \text{if }i=r_{\ell};\\
k+1, & \text{if }i\notin\left\{  r_{1},\ldots,r_{k}\right\}  .
\end{array}
\right.
\]
Thus $\varkappa$ is a partial map from $\mathbf{n}^{k}$ to $\left(
\mathbf{k+1}\right)  ^{n}$, and it is straightforward to verify that
$\varkappa$ is injective, and $\varkappa\left(  r\right)  ^{\sigma^{-1}%
}=\varkappa\left(  r^{\sigma}\right)  $ holds for all $\sigma\in S_{n}$ and
$r\in\mathbf{n}^{k}$ with mutually different components. (Here $\varkappa
\left(  r\right)  ^{\sigma^{-1}}$ refers to the action of $S_{n}$ on $\left(
\mathbf{k+1}\right)  ^{n}$ by permuting the components of $n$-tuples, while
$r^{\sigma}$ refers to the action of $S_{n}$ on $\mathbf{n}^{k}$ by mapping
$k$-tuples componentwise.)

Now let $G\leq S_{n}$ and $\pi\in\overline{G}^{\left(  k+1\right)  }$; we need
to show that $r^{\pi}\in r^{G}$ for every $r\in\mathbf{n}^{k}$. We may assume
that the components of $r$ are pairwise distinct (otherwise we can remove the
repetitions and work with a smaller $k$). From $\pi\in\overline{G}^{\left(
k+1\right)  }$ it follows that $\varkappa\left(  r\right)  ^{\pi^{-1}}%
\in\varkappa\left(  r\right)  ^{G}$. Therefore, we have $\varkappa\left(
r^{\pi}\right)  =\varkappa\left(  r\right)  ^{\pi^{-1}}\in\varkappa\left(
r\right)  ^{G}=\varkappa\left(  r^{G}\right)  $, and then the injectivity of
$\varkappa$ gives that $r^{\pi}\in r^{G}$.
\end{proof}

Note that the proposition above implies that each group that is not Galois
closed over $\mathbf{k}$ (such as the ones in Theorem~\ref{thm main}) is also
an example of a permutation group that cannot be characterized by $(k-1)$-ary relations.

The connection between the two notions of closure in the other direction is
much weaker. For example, the \emph{Mathieu group} $M_{12}$ is Galois closed
over $\mathbf{2}$ (since it is the automorphism group of a hypergraph), but it
is not $5$-closed (since it is $5$-transitive, and this implies that the
$5$-closure of $M_{12}$ is the full symmetric group $S_{12}$). In some sense,
this is a worst possible case, as it is not difficult to prove that if a
subgroup of $S_{n}$ is Galois closed over $\mathbf{2}$, then it is
$\left\lfloor \frac{n}{2}\right\rfloor $-closed (in particular, $M_{12}$ is
$6$-closed).

\begin{problem}
Determine the smallest number $w\left(  n,k\right)  $ such that every subgroup
of $S_{n}$ that is Galois closed over $\mathbf{k}$ is also $w\left(
n,k\right)  $-closed.
\end{problem}

\subsection*{Acknowledgement}

The authors are grateful to Keith Kearnes, Erkko Lehtonen and S\'{a}ndor
Ra\-de\-lecz\-ki for stimulating discussions, and also to P\'{e}ter P\'{a}l
P\'{a}lfy who suggested the example mentioned before
Problem~\ref{problem transitive long chain of closures}.


\begin{thebibliography}{[PöKa79]}                                                                                     %


\bibitem[Bo1889]{Bochert89}{\normalsize A.~Bochert, \textit{Ueber die Zahl der
verschiedenen Werthe, die eine Function gegebener Buchstaben durch
Vertauschung derselben erlangen kann,} Math. Ann. \textbf{33} (1889),
584--590.
}

\bibitem[ClKr91]{CloK91}{\normalsize P.~Clote and E.~Kranakis, \textit{Boolean
functions, invariance groups, and parallel complexity,} SIAM J. Comput.
\textbf{20}, (1991), 553--590. }

\bibitem[CrHa11]{CrHa11}{\normalsize Y. Crama and P.L. Hammer, \textit{Boolean
functions. Theory, algorithms, and applications.} Encyclopedia of Mathematics
and its Applications 142. Cambridge University Press 2011. }

\bibitem[DiMo96]{DixMo}{\normalsize J.D.~Dixon, B.~Mortimer,
\textit{Permutation groups,} Graduate Texts in Mathematics \textbf{163},
Springer-Verlag, 1996. }

\bibitem[Ha76]{hall}{\normalsize M.~Hall,\textit{ The theory of groups,}
Chelsea Publishing Company, New York, 1976 }

\bibitem[In84]{Inglis}{\normalsize N.F.J.~Inglis, \textit{On orbit equivalent
permutation groups,} Archiv der Mathematik \textbf{43} (1984), 297--300. }

\bibitem[Ki98]{Kis1998}{\normalsize A.~Kisielewicz, \textit{Symmetry groups of
Boolean functions and constructions of permutation groups,} J. of Algebra
\textbf{199} (1998), 379--403. }

\bibitem[Kl1890]{klein}{\normalsize F.~Klein, \textit{Vorlesungen \"{u}ber die
Theorie der elliptischen Modulfunctionen. Ausgearbeitet und
vervollst\"{a}ndigt von Dr. Robert Fricke,} Teubner, Leipzig, 1890. }

\bibitem[Ke]{Kearnes}{\normalsize K.~Kearnes, personal communication }

\bibitem[P{\"{o}}04]{Poe04a}{\normalsize R.~P\"{o}schel, \textit{Galois
connections for operations and relations}. In: K.~Denecke, M.~Ern\'{e}, and
S.L.~Wismath (Eds.), \textit{Galois connections and applications,} Mathematics
and its Applications \textbf{565}, Kluwer Academic Publishers, Dordrecht,
2004, pp. 231--258. }

\bibitem[P{\"{o}}Ka79]{PoeK79}{\normalsize R.~P\"{o}schel and
L.A.~Kalu\v{z}nin, \textit{Funktionen- und Rela\-tionen\-algebren}, Deutscher
Verlag der Wissenschaften, Berlin, 1979, {B}irkh\"{a}user Verlag Basel, Math.
Reihe Bd. 67, 1979. }

\bibitem[Re30]{remak}{\normalsize R.~Remak, \textit{\"{U}ber die Darstellung
der endlichen Gruppen als Untergruppen direkter Produkte,} J. Reine Angew.
Math. \textbf{163} (1930), 1--44. }

\bibitem[Se97]{Ser97}{\normalsize \'{A}.~Seress, \textit{Primitive groups with
no regular orbits on the set of subsets,} Bulletin of the London Mathematical
Society, \textbf{29} (1997), 697--704. }

\bibitem[SeYa08]{SY}{\normalsize \'{A}.~Seress, K.~Yang: \textit{On
orbit-equivalent, two-step imprimitive permutation groups,} Computational
Group Theory and the Theory of Groups, Contemporary Math. \textbf{470} (2008),
271--285. }

\bibitem[SiWa85]{SW}{\normalsize J.~Siemons, A.~Wagner, \textit{On finite
permutation groups with the same orbits on unordered sets,} Archiv der
Mathematik \textbf{45} (1985), 492--500. }

\bibitem[Wi64]{Wie}{\normalsize H.~Wielandt, \textit{Finite permutation
groups,} Academic Press, 1964. }

\bibitem[Wi69]{Wi69}{\normalsize H.~Wielandt, \textit{Permutation groups
through invariant relations and invariant functions,} Dept. of Mathematics,
Ohio State University, 1969. }
\end{thebibliography}
\end{document}